\numberwithin{equation}{section}
\newcommand{\al}{\alpha}
\newcommand{\be}{\beta}
\newcommand{\ga}{\gamma}
\newcommand{\de}{\delta}
\newcommand{\De}{\Delta}
\newcommand{\om}{\omega}
\renewcommand{\th}{\theta}
\newcommand{\R}{\ensuremath{{\mathbb R}}}
\newcommand{\brmk}{\begin{rmk}}
\newcommand{\ermk}{\end{rmk}}
\newcommand{\partref}[1]{\hbox{(\csname @roman\endcsname{\ref{#1}})}}
\newcommand{\beq}{\begin{equation}}
\newcommand{\eeq}{\end{equation}}
\newcommand{\beqs}{\begin{equation*}}
\newcommand{\eeqs}{\end{equation*}}
\newcommand{\beqa}{\begin{equation}\begin{aligned}}
\newcommand{\eeqa}{\end{aligned}\end{equation}}
\newcommand{\beqas}{\begin{equation*}\begin{aligned}}
\newcommand{\eeqas}{\end{aligned}\end{equation*}}
\newcommand{\half}{\frac{1}{2}}
\newcommand{\eps}{\varepsilon}
\newcommand{\supp}{\text{supp}}
\numberwithin{equation}{section}
\numberwithin{theorem}{section}
\numberwithin{lemma}{section}
\numberwithin{proposition}{section}
\numberwithin{corollary}{section}
\numberwithin{definition}{section}
\numberwithin{remark}{section}
\numberwithin{example}{section}
\begin{document}

\title{Vanishing Viscosity Approach to the Compressible Euler Equations
for Transonic Nozzle and Spherically Symmetric Flows}


\titlerunning{Vanishing Viscosity Approach to the Compressible Euler Equations}        

\author{Gui-Qiang G. Chen \and \\
Matthew R. I. Schrecker
}


\institute{Gui-Qiang G. Chen  \at
Mathematical Institute, University of Oxford,
Oxford, OX2 6GG, UK \\
              \email{chengq@maths.ox.ac.uk}           
           \and
           Matthew R. I. Schrecker \at
              Mathematical Institute, University of Oxford,
Oxford, OX2 6GG, UK\\
\email{matthew.schrecker@gmail.com}
}

\date{Received: November 28, 2017 / Accepted: February 16, 2018}

\maketitle

\begin{abstract}
We are concerned with globally defined entropy solutions to the Euler equations
for compressible fluid flows in transonic nozzles with general cross-sectional areas.
Such nozzles include the de Laval nozzles and other more general nozzles
whose cross-sectional area functions are allowed at the nozzle ends to
be either zero (closed ends) or infinity (unbounded ends).
To achieve this, in this paper, we develop a vanishing viscosity method to construct globally defined
approximate solutions and then establish essential uniform estimates in weighted $L^p$ norms
for the whole range of physical adiabatic exponents $\gamma\in (1, \infty)$, so that the
viscosity approximate solutions satisfy the general $L^p$ compensated compactness
framework.
The viscosity method is designed to
incorporate artificial viscosity terms with the natural Dirichlet boundary conditions
to ensure the uniform estimates.
Then such estimates lead to both the convergence of the approximate solutions and
the existence theory of globally defined finite-energy entropy solutions
to the Euler equations for transonic flows that may have different end-states
in the class of nozzles with general cross-sectional areas for all $\gamma\in (1, \infty)$.
The approach and techniques developed here apply to other problems with similar difficulties.
In particular,
we successfully apply them to construct globally defined spherically symmetric entropy solutions
to the Euler equations for all $\gamma\in (1, \infty)$.
\end{abstract}

\keywords{\, Vanishing viscosity \and global solutions\and Euler equations\and compressible flows\and transonic nozzle flows\and
general cross-sectional areas\and closed ends\and unbounded ends\and viscosity method\and
approximate solutions\and large initial data\and different end-states\and relative finite-energy\and
uniform estimates\and energy estimates\and compensated compactness\and weak convergence argument\and
spherically symmetric flows\and adiabatic exponent}

\subclass{\, 35Q35\and 35Q31\and 35B25\and 35B44\and 35L65\and 35L67\and 76N10}

\section{\, Introduction}
\label{intro}
We are concerned with globally defined entropy solutions to the Euler equations for
transonic nozzle flows with general cross-sectional areas and related
compressible flows of geometric structure including spherically symmetric flows.
Then the Euler equations
may be reduced to the following one-dimensional Euler system with geometric
terms:
\begin{equation}\label{transonic}
\begin{cases}
\rho_t+ m_x+\frac{A'(x)}{A(x)}m=0, \\[1mm]
m_t+\big(\frac{m^2}{\rho} +p(\rho)\big)_x +\frac{A'(x)}{A(x)}\frac{m^2}{\rho}=0
\end{cases}
\end{equation}
for $(t,x)\in\mathbb{R}_+\times\mathbb{R}$ with $\R_+=[0, \infty)$,
where  $\rho:\mathbb{R}\rightarrow\mathbb{R}_+$ is the density of the fluid,
$u=\frac{m}{\rho}:\mathbb{R}\rightarrow\mathbb{R}$ is the velocity,
and $A=A(x)$ is a given $C^2$--function $A:\mathbb{R}\rightarrow\mathbb{R}_+$.
The equation of state is assumed to be that of a polytropic gas:
\begin{equation*}
 p(\rho)=\kappa\rho^\gamma,  \qquad \gamma>1,
\end{equation*}
where
$\kappa:=\frac{(\gamma-1)^2}{4\gamma}$ by scaling (without loss of generality).

\medskip
We first focus on
the Cauchy problem for system \eqref{transonic}:
\begin{equation}\label{prob:Cauchy}
(\rho, m)|_{t=0}=(\rho_0, m_0),
\end{equation}
where the initial data $(\rho_0, m_0)(x)$ have end-point states $(\rho_\pm,m_\pm)$ at $x=\pm\infty$, for constants
$\rho_\pm\geq0$ and $m_\pm\in\mathbb{R}$.
The end-states are allowed to be different, {\it i.e.},  $(\rho_+,m_+)\ne (\rho_-,m_-)$ in general.

The Cauchy problem \eqref{transonic}--\eqref{prob:Cauchy}
models transonic nozzle flow through a variable-area
duct, where $A(x)$ describes the cross-sectional area of the nozzle at point $x\in \R$
({\it cf}. \cite{CourantFriedrichs,EM,GL,GM,L1,L2,L3,Whitham}).
The existence of global solutions of this problem was first obtained in \cite{L1}
by incorporating
the steady-state building
blocks with the Glimm scheme \cite{Gl}, provided that the initial data
have small total variation and are bounded away from both sonic and vacuum
states.
Some numerical methods were introduced to compute transient
gas flows in a de Laval nozzle
in \cite{GL,GM}.
A mathematical analysis of the qualitative
behavior of nonlinear waves for nozzle flow  was given in \cite{L3}.
On the other hand,
the general transonic nozzle problem, so that the initial data are allowed to
be arbitrarily large with different end-states and only relative finite-energy
and that the cross-sectional areas are allowed
to tend to either zero (closed ends) or infinity (unbounded ends) as $x\to \pm \infty$,
has not been understood yet, including the existence theory for all $\gamma\in (1,\infty)$,
since the solution may blow up at the closed ends.
For the general case,
the corresponding cross-sectional area function $A(x)>0$ is
a $C^2$--function that requires only either
\begin{equation}\label{1.3a}
\big\|\frac{A'}{A}\big\|_{L^\infty(\R)}+\|A'\|_{L^1(-\infty,0)}<\infty
\end{equation}
or
\begin{equation}\label{1.3b}
\big\|\frac{A'}{A}\big\|_{L^\infty(\R)}+\|A'\|_{L^1(0, \infty)}<\infty.
\end{equation}
In particular, this general class of nozzles includes the de Laval nozzles with closed ends.
In this paper, we first establish the existence of globally defined entropy solutions to the
Euler equations \eqref{transonic}
for compressible fluid flows in transonic nozzles with general cross-sectional areas
satisfying \eqref{1.3a} or \eqref{1.3b}.
To achieve this,
we first develop a viscosity method to construct globally defined
approximate solutions
and then establish several essential uniform estimates for the whole interval
of adiabatic exponents $\gamma\in (1, \infty)$, so that the
viscosity approximate solutions satisfy the general $L^p$ compensated compactness
framework established in Chen-Perepelitsa \cite{ChenPerep1} (also see \cite{Chen2} and the references cited therein).
Our viscosity method is carefully designed to
incorporate artificial viscosity terms with the Dirichlet boundary conditions
naturally to ensure the uniform estimates for all $\gamma\in (1, \infty)$.
Then these uniform estimates lead to both the convergence of the approximate solutions and
the existence theory of globally defined entropy solutions of problem
\eqref{transonic}--\eqref{prob:Cauchy}
for initial data of relative finite-energy for all $\gamma\in (1,\infty)$,
provided that
the cross-sectional area function $A(x)$ satisfies \eqref{1.3a} or \eqref{1.3b}.

The case that $A:\mathbb{R}_+\rightarrow\mathbb{R}_+$ with $A(x)=\omega_n x^{n-1}, \omega_n>0$, for $x\in \R_+$
corresponds to the equations for the isentropic
Euler equations with spherical symmetry, which does not satisfy condition \eqref{1.3a} or \eqref{1.3b}.
The study of spherically symmetric solutions dates back to the 1950s, and is motivated
by many important physical problems such as flow in a jet engine inlet manifold
and stellar dynamics including gaseous stars and supernovae
formation
({\it cf}. \cite{CourantFriedrichs,Guderley,LiWang,Rosseland,Slemrod,Whitham}).
The central feature is the strengthening of waves as they move radially inward,
especially near the origin.
Various
evidence indicates that spherically symmetric solutions of the compressible Euler
equations may blow up near the origin at a certain time in some situations. A
longstanding open, fundamental problem is whether a concentration could be formed at
the origin, that is, the density becomes a delta measure at the origin, especially when a
focusing spherical shock is moving inward towards the origin ({\it cf.} \cite{CourantFriedrichs,Rosseland,Whitham}).
The first existence result for global entropy
solutions including the origin was established in Chen \cite{Chen} for a class of $L^\infty$
Cauchy data of arbitrarily large amplitude, which models outgoing blast waves and large-time
asymptotic solutions; some further extensions of such Cauchy initial data for global entropy
solutions in $L^\infty$
can be found in Huang-Li-Yuan \cite{HuangLiYuan}
and the references cited therein.
Also see Slemrod \cite{Slemrod} for the resolution of the spherical piston
problem for isentropic gas dynamics via a self-similar viscous limit,
and LeFloch-Westdickenberg \cite{LeFlochWest} for a compactness framework to ensure the strong compactness of
spherically symmetric approximate solutions with uniform finite-energy norms for the
case $1<\gamma\le \frac{5}{3}$.
In Chen-Perepelitsa \cite{ChenPerep2}, under the assumption that $\gamma\in (1, 3]$,
the existence of globally defined spherically symmetric entropy solutions
was established via the vanishing viscosity approximation.
As a direct application of the approach and techniques developed in this paper
for the general transonic nozzle problem,
we successfully remove the restriction that $\gamma\in (1, 3]$ and establish
the existence of globally defined finite-energy entropy
solutions for the general case of large initial data for the full range $\gamma>1$,
including the unsolved case $\gamma>3$.
This implies that there exist globally defined spherically symmetric solutions
with large initial data of finite-energy for the whole range $\gamma\in (1,\infty)$,
which do not form concentrations at the origin.

The structure of the paper is as follows:
In \S 2, we discuss the properties of weak entropy functions of system \eqref{transonic}, introduce the definition of entropy solutions,
and present the main theorems, Theorems  \ref{thm:main}--\ref{thm:sph-symm}.

In \S \ref{sec:artificial}--\S 5,
we develop the viscosity method to construct the afore-mentioned approximate solutions and
demonstrate their convergence
to entropy solutions of the Cauchy problem \eqref{transonic}--\eqref{prob:Cauchy},
first under the following assumptions on the cross-sectional area function $A$:
\begin{align}
& 0< A_0\leq A(x)\leq A_1<\infty,\label{1.4}\\
& \|A\|_{C^2(\mathbb{R})}+\|A'\|_{L^1(\mathbb{R})}\leq A_2<\infty. \label{1.5}
\end{align}
The construction of the approximate solutions in \S \ref{sec:artificial} employs the standard theory of quasilinear parabolic systems
as detailed in \cite{LadySoloUral}.
In \S 4, we derive  \it uniform \rm estimates, independent of the viscosity coefficient $\varepsilon$,
on the approximate solutions, so that they satisfy the requirements of
the general $L^p$ compensated compactness framework in \cite{ChenPerep1} to pass to the limit
as $\varepsilon\rightarrow 0$, and hence deduce, in \S 5, the existence
of globally defined entropy solutions of the Cauchy problem \eqref{transonic}--\eqref{prob:Cauchy}
with general large initial data of different end-states and relative finite-energy in the sense of Theorem \ref{thm:main}.

In \S \ref{sec:general}, we describe how the additional assumptions on $A(x)$ in \eqref{1.4}--\eqref{1.5}
can be removed to obtain
the expected results in Theorem \ref{thm:main} for the general case \eqref{1.3a} or \eqref{1.3b}.

Finally, in \S 7, we show how the approach and techniques developed in \S 3--\S 6
can be extended to the general setting of spherically
symmetric solutions to the Euler equations, leading to our second main theorem, Theorem \ref{thm:sph-symm},
for the whole range of adiabatic exponents $\gamma\in (1, \infty)$, especially including the unsolved case: $\gamma>3$.

\section{\, Entropy Solutions and Main Theorems}\label{sec:2}

In this section, we discuss the properties of weak entropy functions of system \eqref{transonic},
introduce the definition of entropy solutions,
and present the main theorems, Theorems  \ref{thm:main}--\ref{thm:sph-symm}.

\subsection{\, Entropy}
An entropy-entropy flux pair (or entropy pair, for simplicity) is a pair
of functions $(\eta,q):\mathbb{R}_+\times\mathbb{R}\rightarrow\mathbb{R}^2$ such that
\begin{equation}\label{2.1}
 \nabla q(\rho,m)=\nabla\eta(\rho,m)\nabla\begin{pmatrix}
                                 m\\
                                 \frac{m^2}{\rho}+p(\rho)
                                \end{pmatrix},
\end{equation}
where $\nabla$ is the gradient with respect to the conservative variables $(\rho, m)$.
For example, the mechanical energy and its flux form an entropy pair:
\begin{eqnarray}
&&\eta^*(\rho,m)=\frac{1}{2}\frac{m^2}{\rho}+\frac{\kappa}{\gamma-1}\rho^\gamma,\label{2.2a}\\
&& q^*(\rho,m)=\frac{1}{2}\frac{m^3}{\rho^2}+\frac{\kappa\gamma}{\gamma-1}m\rho^{\gamma-1}. \label{2.2b}
\end{eqnarray}

We also use the well-known fact ({\it cf.} \cite{DiPerna,LPT})
that any weak entropy pair (that is, the corresponding entropy
vanishes at $\rho=0$) for system \eqref{transonic} can be expressed as
\begin{align}
\eta^\psi(\rho,m)=&\,\rho\int_{-\infty}^\infty\psi(\frac{m}{\rho}+\rho^\theta s)[1-s^2]_+^{\frac{3-\gamma}{2(\gamma-1)}}\, ds,\label{2.3a} \\
q^\psi(\rho,m)
=&\,\rho\int_{-\infty}^{\infty}(\frac{m}{\rho}+\theta\rho^\theta s)\psi(\frac{m}{\rho}+\rho^\theta s)[1-s^2]_+^{\frac{3-\gamma}{2(\gamma-1)}} \,ds, \label{2.3b}
\end{align}
where $\theta=\frac{\gamma-1}{2}$,
and $\psi:\mathbb{R}\rightarrow\mathbb{R}$ is said to
be a generator of the entropy pair $(\eta^\psi, q^\psi)$.

\subsection{\, Entropy Solutions}
Now we introduce the definition of entropy solutions of the Cauchy problem \eqref{transonic}--\eqref{prob:Cauchy}.

\begin{definition}\label{def:entropysol}
\, An {\it entropy solution} \rm of the Cauchy problem \eqref{transonic}--\eqref{prob:Cauchy} is a pair
  $(\rho,m):\mathbb{R}^2_+\rightarrow\mathbb{R}^2_+$ such that
\begin{enumerate}
\item[(i)] For any $\phi\in C_c^\infty(\mathbb{R}^2_+)$,
\begin{eqnarray*}
&&\int_{\mathbb{R}^2_+}(\rho\phi_t+m\phi_x)A(x)\,dx\,dt+\int_\mathbb{R}\rho_0(x)\phi(0,x)A(x)\,dx=0,
\label{2.4a}\\
&& \int_{\mathbb{R}^2_+}\big(m\phi_t+\frac{m^2}{\rho}\phi_x+p(\rho)\frac{(A\phi)_x}{A}\big)A\,dx\,dt+\int_\mathbb{R}m_0(x)\phi(0,x)A(x)\,dx=0;
\label{2.4b}
\end{eqnarray*}
\item[(ii)] For any convex $\psi(s)$, with sub-quadratic growth at infinity, generating the entropy pair $(\eta^\psi,q^\psi)$,
 \begin{equation}\label{entropyinequality}
   \big(\eta^\psi A(x)\big)_t+\big(q^\psi A(x)\big)_x+A'(x)\big(m\eta^\psi_\rho+\frac{m^2}{\rho}\eta^\psi_m-q^\psi\big)\leq 0
 \end{equation}
  in the sense of distributions.
 \end{enumerate}
\end{definition}

In order to define the energy with respect to the end-states $(\rho_\pm, m_\pm)=(\rho_\pm, \rho_\pm u_\pm)$,
where $u_\pm=\frac{m_\pm}{\rho_\pm}$ if $\rho_\pm\neq 0$, or $u_\pm=0$ otherwise,
we follow
\cite{ChenPerep1} in defining smooth, monotone functions $(\bar{\rho}(x),\bar{u}(x))$
such that, for some $L_0>1$,
\begin{equation}\label{2.5}
(\bar{\rho}(x),\bar{u}(x))=
\begin{cases}
(\rho_+,u_+), \quad & x\geq L_0,\\[1mm]
(\rho_-,u_-), \quad & x\leq -L_0.
 \end{cases}
\end{equation}
By the Galilean invariance of the system, without loss of generality, we may assume either $u_-=0$ or $u_+=0$.

We define the relative mechanical energy with respect to $(\bar{\rho}(x),\bar{m}(x))=(\bar{\rho}(x), \bar{\rho}(x)\bar{u}(x))$:
\beqa\label{relative-energy}
\overline{\eta^*}(\rho,m)=&\,\eta^*(\rho,m)-\eta^*(\bar{\rho},\bar{m})-\nabla\eta^*(\bar{\rho},\bar{m})\cdot(\rho-\bar{\rho},m-\bar{m})\\
  =&\,\frac{1}{2}\rho|u-\bar{u}|^2+\overline{h}(\rho,\bar{\rho})\geq0,
\eeqa
where $\overline{h}(\rho,\bar{\rho})=h(\rho)-h(\bar{\rho})-h'(\bar{\rho})(\rho-\bar{\rho})$,
and $h(\rho)=\frac{\kappa \rho^\gamma}{\gamma-1}$.

\begin{definition}
\, The {\it relative total mechanical energy} \rm for \eqref{transonic} with respect to the end-states $(\rho_\pm,m_\pm)$ through
 $(\bar{\rho},\bar{m})$ is
 \begin{equation*}
  E[\rho,m](t):=\int_{-\infty}^\infty\overline{\eta^*}(\rho(t,x), m(t,x))A(x)\, dx\geq 0.
 \end{equation*}
 We say that a pair $(\rho,m)$ is of {\it relative finite-energy} with respect to the end-states $(\rho_\pm,m_\pm)$
 if $E[\rho,m](t)<\infty$ for any $t>0$.
\end{definition}

\subsection{\, Main Theorems}
We now present the two main theorems.

\begin{theorem}[Transonic Nozzle Solutions]\label{thm:main}
\, Let the cross-sectional area function $A(x)>0$ be a $C^2$--function satisfying \eqref{1.3a} or \eqref{1.3b}.
Assume that $(\rho_0,m_0)\in L^1_{loc}(\mathbb{R})$ with $\rho_0(x)\geq0$ is of relative finite-energy
with respect to the prescribed end-states $(\rho_\pm,m_\pm)$.
Then there exists a
global finite-energy entropy solution $(\rho,m)(t,x)$ of the transonic nozzle problem
\eqref{transonic}--\eqref{prob:Cauchy}
in the sense of Definitions {\rm \ref{def:entropysol}}--{\rm 2.2} such that
\begin{equation*}
(\rho,m)\in L^p_{loc}(\mathbb{R}^2_+)\times L^q_{loc}(\mathbb{R}^2_+) \qquad\,\,
\mbox{for $p\in[1,\gamma+1)$ and $q\in[1,\frac{3(\gamma+1)}{\gamma+3})$}.
\end{equation*}
\end{theorem}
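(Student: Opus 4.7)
The plan is to carry out a vanishing viscosity approximation scheme in several reduction stages, following the road map sketched in the introduction. First I would reduce to the case where $A$ satisfies the stronger hypotheses \eqref{1.4}--\eqref{1.5} by approximating a general $A$ obeying \eqref{1.3a} or \eqref{1.3b} with a sequence $A_n$ that is bounded above and below on each $[-n,n]$ and agrees with the true $A$ on a growing set; once the theorem is proved under \eqref{1.4}--\eqref{1.5} with constants that depend only on the quantities in \eqref{1.3a}--\eqref{1.3b} and on the relative finite-energy, a further compactness argument in $n$ would deliver the general case. Working on a large bounded interval $[-L,L]$, I would next regularise \eqref{transonic} by adding artificial viscosity $\eps(\rho_{xx},m_{xx})$ and imposing Dirichlet boundary data that interpolate smoothly to the end-states $(\rh_\pm,m_\pm)$; standard quasilinear parabolic theory (as in Ladyzhenskaya--Solonnikov--Uraltseva) then yields global smooth approximate solutions $(\rh^\eps,m^\eps)$, with a lower bound $\rh^\eps\ge c(\eps,L)>0$ ensuring the parabolicity is uniform on each time strip.

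The next block of the argument is the core of the paper: uniform estimates in $\eps$ and $L$. I would first obtain the relative energy inequality by testing the system against $\nabla\overline{\eta^*}$, picking up a good dissipation term $\eps\int(\rh_x)^2+(m_x)^2/\rh\,A\,dx$ plus controlled contributions from $A'/A$ (bounded by \eqref{1.3a} or \eqref{1.3b}) and from the boundary adjustments. This gives $E[\rh^\eps,m^\eps](t)\le C(T)$ uniformly. The harder estimate is the higher integrability needed for the Chen--Perepelitsa $L^p$ compensated compactness framework: namely local bounds on $\rh^{\ga+1}$ and on $m^{3(\ga+1)/(\ga+3)-\delta}$ uniform in $\eps$. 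For this I would use suitably truncated weight entropies $\eta^\psi$ generated by $\psi(s)=\thalf s|s|$-type functions against appropriate test weights $w(x)$, track the sign of the geometric source term $A'(m\eta^\psi_\rho+\frac{m^2}{\rh}\eta^\psi_m-q^\psi)$ carefully, and exploit the bounded variation assumption on $A'$ in \eqref{1.3a} or \eqref{1.3b} to absorb its contribution.

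Having these uniform estimates, I would verify the hypotheses of the compensated compactness framework of \cite{ChenPerep1}: $\big(\eta^\psi(\rh^\eps,m^\eps)\big)_t+\big(q^\psi(\rh^\eps,m^\eps)\big)_x$ lies in a compact subset of $H^{-1}_{loc}$ for each weak entropy pair generated by $\psi\in C_c^2(\R)$. The $H^{-1}$ compactness of the principal entropy-production terms follows by splitting into a viscous part, controlled by the energy dissipation, and a geometric part with source $A'/A$, bounded in $L^1_{loc}$. The framework then delivers a subsequence with $(\rh^\eps,m^\eps)\to(\rh,m)$ a.e.; the energy bound and higher integrability upgrade the convergence to the stated $L^p_{loc}\times L^q_{loc}$ topology, and the weak formulations \eqref{entropyinequality} pass to the limit in the distributional sense since $\psi$ is convex with sub-quadratic growth.

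The main obstacle I expect is establishing the uniform higher-integrability estimate across the full range $\ga\in(1,\infty)$, in particular for $\ga\ge 3$, in the presence of the geometric source. In the Cartesian case this is classical, but here one must control the interaction of $m^2/\rh$ with $A'/A$ near points where mass might concentrate; the clever choice of Dirichlet data at the artificial boundary $\pm L$ and of the cut-off weights are what should make the source contributions absorbable rather than destabilising. Once this estimate is in hand, the existence of the relative-finite-energy entropy solution, including the class $\ga>3$ unreachable by earlier methods, follows by the procedure above.
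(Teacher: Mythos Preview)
Your overall architecture---viscous approximation on expanding intervals with Dirichlet data, relative energy estimate, higher integrability via the $\psi(s)=\tfrac12 s|s|$ entropy, $H^{-1}$ compactness, then the Chen--Perepelitsa framework---matches the paper's. Two points, however, deserve attention.

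First, a genuine technical gap: you assert that standard parabolic theory yields global smooth approximate solutions with a lower bound $\rho^\eps\ge c(\eps,L)>0$, but this is precisely the delicate point. The paper does \emph{not} get this for free; it modifies the pressure to $p_\delta(\rho)=\kappa\rho^\gamma+\delta\rho^2$ with $\delta=\delta(\eps)\to0$, specifically to prevent cavitation in the approximate solutions (the extra $2\delta$ in $h_\delta''$ gives non-degenerate control of $\rho_x^2$ near vacuum). Without such a device, the claimed lower bound on $\rho^\eps$ is not standard, and the existence of the approximate solutions themselves is in jeopardy. The paper also uses a geometry-adapted viscosity $\eps A^{-1}(A\rho_x)_x$, $\eps(m_x+\tfrac{A'}{A}m)_x$ rather than plain $\eps(\rho_{xx},m_{xx})$, which makes the energy identity against the weight $A(x)$ close cleanly; your simpler choice would generate extra cross terms involving $A'$ that you have not accounted for.

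Second, your reduction to \eqref{1.4}--\eqref{1.5} is different from the paper's and somewhat circular. You propose to approximate $A$ by $A_n$ and pass to the limit, provided the constants under \eqref{1.4}--\eqref{1.5} depend only on the quantities in \eqref{1.3a}--\eqref{1.3b}. But establishing exactly that dependence \emph{is} the content of the paper's \S6: rather than approximating $A$, the paper keeps $A$ fixed and shows that by choosing the rates of $a(\eps),b(\eps),\delta(\eps)$ to satisfy a list of explicit relations (e.g.\ $\eps|b-a|\le M$, $\delta\eps^{-1}\|A\|_{L^\infty(a,b)}|a|^{\beta(\gamma)}\|A^{(\gamma-3)/(\gamma-1)}\|_{L^\infty(a,b)}\le M$, etc.), every estimate in the proof becomes uniform directly under \eqref{1.3a} or \eqref{1.3b}. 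Once you do that work, the extra layer $A_n\to A$ is redundant; if you skip it, your compactness-in-$n$ step has no uniform estimates to feed on.
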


\begin{remark}
 \, Condition \eqref{1.3a} or \eqref{1.3b} allows for the nozzles to open up at one end {\rm (}with an unbounded cross-sectional area{\rm )},
and to shrink at both ends with rate $|x|^{-\al}$ for any $\al>0$ {\rm (}closed ends{\rm )}. In particular,
a positive lower bound, or upper bound, of the cross-sectional area function is not required.
\end{remark}

The existence of entropy solutions presented in the above theorem will be established
as a vanishing viscosity limit
of appropriately designed approximate solutions.
We will therefore first develop the viscosity method to construct such approximate solutions and
then derive the uniform estimates of the approximate solutions so that they satisfy the requirements of the $L^p$ compensated compactness
framework in \cite{ChenPerep1}.
Moreover, as a consequence of the approach and techniques developed below,
we are able to prove the following result for the spherically symmetric Euler equations,
including the unsolved case: $\gamma>3$.

\begin{theorem}\label{thm:sph-symm}
\, Let $(\rho_0,m_0)\in (L^1_{loc}(\R_+))^2$ be finite-energy initial data such that $\rho_0(x)\geq0$ and
$$
E_*[\rho_0,m_0]:=\int_0^\infty\eta^*(\rho_0,m_0)x^{n-1}dx <\infty.
$$
Then, for the whole range $\gamma\in (1, \infty)$, there exists a global entropy solution $(\rho, m)(t,x)$
of the spherically symmetric Euler equations
with $A(x)=\omega_n x^{n-1}, x\ge0$, where $\omega_n$ is the surface area of the unit sphere in $\R^n$, such that
$(\rho,m)\in L^p_{loc}(\mathbb{R}^2_+)\times L^q_{loc}(\mathbb{R}^2_+)$ for $p\in[1,\gamma+1)$ and $q\in[1,\frac{3(\gamma+1)}{\gamma+3})$, and
\begin{equation*}
E_*[\rho,m](t):=\int_0^\infty\eta^*(\rho,m)x^{n-1}dx\leq E_*[\rho_0,m_0]<\infty.
\end{equation*}
\end{theorem}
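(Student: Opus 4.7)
The strategy is to adapt the vanishing viscosity construction of Sections 3--6 to the spherically symmetric setting by truncating to an annulus $[\delta,L]$ with $0<\delta<L<\infty$. Since $A(x)=\om_n x^{n-1}$ fails both \eqref{1.3a} and \eqref{1.3b}---vanishing at the origin and unbounded at infinity---the conditions \eqref{1.4}--\eqref{1.5} nevertheless hold on any such annulus. On $[\delta,L]$ one applies the artificial viscosity method with suitably chosen Dirichlet-type boundary data at $x=\delta$ and $x=L$, tailored to preserve the relative finite-energy structure, producing smooth approximants $(\rho^{\ep,\delta,L},m^{\ep,\delta,L})$; taking $\ep\to 0$ as in Section 5 yields an entropy solution $(\rho^{\delta,L},m^{\delta,L})$ of the truncated problem.

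The core task is to derive bounds on $(\rho^{\delta,L},m^{\delta,L})$ that are uniform in $(\delta,L)$ on compact subsets of $\R_+\times(0,\infty)$. The basic ingredient is the mechanical energy estimate $E_*[\rho^{\delta,L},m^{\delta,L}](t)\leq E_*[\rho_0,m_0]$, which follows because a direct computation gives $m\eta^*_\rho+\frac{m^2}{\rho}\eta^*_m-q^*\equiv 0$ for the pair \eqref{2.2a}--\eqref{2.2b}, so the geometric source term in \eqref{entropyinequality} contributes nothing at the level of the mechanical energy and the bound survives the limit $\ep\to 0$. Supplementing this with the higher-integrability estimates of Section 4---local $L^{\gamma+1}$ bounds on $\rho$ and the integrability of $\rho|u|^3$ on compact subsets of $(0,\infty)$---requires adapting those arguments to the weight $A(x)=\om_n x^{n-1}$ and absorbing the singular factor $\frac{A'}{A}=\frac{n-1}{x}$, which is harmless on compact subsets of $(0,\infty)$ but would obstruct any estimate reaching down to $x=0$. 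The main obstacle---and the reason earlier vanishing-viscosity results were restricted to $\gamma\in(1,3]$---is that for $\gamma>3$ the kernel exponent $\frac{3-\gamma}{2(\gamma-1)}$ in \eqref{2.3a}--\eqref{2.3b} is negative, the weak entropy kernels become less regular, and the classical entropy-flux bounds degenerate. Resolving this uses precisely the $L^p$ weighted estimates designed in Section 4 for the full range $\gamma\in(1,\infty)$, now carried through against the weight $x^{n-1}$; this is the technical heart of the argument.

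With these uniform bounds in hand, the $L^p$ compensated compactness framework of \cite{ChenPerep1} delivers a subsequence converging pointwise a.e.\ on $(0,T)\times(0,\infty)$ to a limit $(\rho,m)$ satisfying the Euler system weakly and the entropy inequality \eqref{entropyinequality} for every convex, sub-quadratic generator $\psi$, in the sense of Definition \ref{def:entropysol}. To extend the weak formulation so that test functions may touch $x=0$, one notes that contributions from a neighbourhood $[0,\delta]$ of the origin in the weak formulation are dominated by $\int_0^\delta \eta^*(\rho,m)\,x^{n-1}\,dx$, which vanishes as $\delta\to 0$ by the propagated energy bound $E_*[\rho,m](t)\leq E_*[\rho_0,m_0]<\infty$. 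This both establishes the claimed entropy solution and rules out mass concentration at the origin, for the entire range $\gamma\in(1,\infty)$.
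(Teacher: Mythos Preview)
Your proposal misidentifies the actual obstacle for $\gamma>3$. The paper states explicitly (opening of \S 7) that in \cite{ChenPerep2} the uniform estimates and the convergence argument already work for all $\gamma>1$; the restriction to $\gamma\in(1,3]$ arose solely in the \emph{construction of the viscous approximants}, specifically in closing the higher-order a priori estimate (\cite[Lemma 2.3]{ChenPerep2}) under the Neumann condition at the inner boundary. The sign of the kernel exponent in \eqref{2.3a}--\eqref{2.3b} is not the issue. The paper's remedy is either to impose Dirichlet data at both endpoints---so that the weak maximum principle of Lemma \ref{lemma:max-principle} becomes available and the construction of \S \ref{sec:artificial} runs for every $\gamma>1$---or to keep the Neumann data and invoke the extra convex entropy generated by $\psi(s)=s^4$ (Lemma \ref{lemma:sph-symm-neumann-extra-energy}) to obtain \eqref{est:rho-two-gamma} and thereby close the estimate. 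Your choice of Dirichlet data happens to implement the first remedy, but you attribute its success to the wrong mechanism; the ``$L^p$ weighted estimates designed in Section 4'' are not what removes the restriction.

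Your architecture also differs from the paper's, and this difference creates a genuine gap. The paper takes a \emph{single} limit, with $a(\eps)\to0$ and $b(\eps)\to\infty$ coupled to $\eps\to0$, so that compensated compactness is applied exactly once, to the viscous sequence, where the $H^{-1}_{loc}$ compactness of $\eta^\psi_t+q^\psi_x$ is furnished by Proposition \ref{compactness} via the explicit parabolic dissipation. You instead pass $\eps\to0$ on a fixed annulus $[\delta,L]$ to obtain entropy solutions $(\rho^{\delta,L},m^{\delta,L})$, and then propose to apply the framework of \cite{ChenPerep1} a second time as $\delta\to0$, $L\to\infty$. That framework requires not only the integrability bounds you cite but also that $\eta^\psi(\rho^{\delta,L},m^{\delta,L})_t+q^\psi(\rho^{\delta,L},m^{\delta,L})_x$ be precompact in $H^{-1}_{loc}$ for every $\psi\in C_c^\infty(\R)$. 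After the viscosity is gone there is no analogue of Proposition \ref{compactness}; one would have to argue via the entropy inequality (decomposing a general $\psi$ into convex pieces, bounding the total variation of the resulting dissipation measures on compact sets using the $L^p$ estimates, and then interpolating with the $W^{-1,r}$ bound for some $r>2$). You do not address this, and it is not a triviality. The paper's single-limit design sidesteps the issue entirely.
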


\medskip
\section{\, Viscosity Approximate Solutions}\label{sec:artificial}
We now develop the viscosity method to construct the viscosity approximate solutions
of the Cauchy problem \eqref{transonic}--\eqref{prob:Cauchy} and make some necessary estimates.
Throughout this section and \S \ref{sec:uniform}--\S 5,
we first assume that
there exist $A_0,A_1, A_2>0$ such that conditions \eqref{1.4}--\eqref{1.5}
hold for the cross-sectional area function $A(x)$ for simplicity of presentation.

For fixed $\varepsilon>0$, we consider the following approximate equations for $(t,x)\in \mathbb{R}_+\times(a,b)$:
\begin{equation}\label{approx}
 \begin{cases}
  \rho^\varepsilon_t+m^\varepsilon_x+\frac{A'(x)}{A(x)}m^\varepsilon=\varepsilon\big(\rho^\varepsilon_{xx}+\frac{A'(x)}{A(x)}\rho^\varepsilon_x\big),\\[1mm]
  m^\varepsilon_t+\big(\frac{(m^\varepsilon)^2}{\rho^\varepsilon} +p_\delta(\rho^\varepsilon)\big)_x
   +\frac{A'(x)}{A(x)}\frac{(m^\varepsilon)^2}{\rho^\varepsilon}=\varepsilon\big(m^\varepsilon_x+\frac{A'(x)}{A(x)}m^\varepsilon\big)_x,
 \end{cases}
\end{equation}
 where
\begin{equation}\label{3.2a}
p_\delta(\rho)=\kappa\rho^\gamma+\delta\rho^2, \hspace{6mm}\,\, \mbox{$\delta=\delta(\varepsilon)>0$ with $\delta(\varepsilon)\rightarrow0$ as $\varepsilon\rightarrow 0$}.
\end{equation}

The introduction of this additional term $\delta \rho^2$ into
the pressure function prevents cavitation ({\it i.e.} the formation of a vacuum state)
in the approximate solutions when $\varepsilon>0$.
Here $a=a(\varepsilon)$ and $b=b(\varepsilon)$ are chosen such that
\begin{align}
&a(\varepsilon)<-L_0,   \hspace{6mm} a(\varepsilon)\rightarrow-\infty\,\, \text{ as $\varepsilon\rightarrow0$,}\label{a}\\
&b(\varepsilon)>L_0,    \hspace{9.5mm} b(\varepsilon)\rightarrow\infty\,\, \text{ as $\varepsilon\rightarrow0$}\label{b}
\end{align}
for $L_0$ in \eqref{2.5}, independent of $\varepsilon$.
We can take $a(\varepsilon)$ and $b(\varepsilon)$ to diverge at the same rate in $\varepsilon\to 0$,
but this rate is not arbitrary.
However, we have the freedom to design the approximating problems in a convenient manner
so that  this rate can be chosen carefully in \S \ref{sec:uniform}--\S 5.

We pose approximate initial and Dirichlet boundary data as follows:
\begin{equation}\label{BCs}
 \begin{cases}
  (\rho,m)|_{t=0}=(\rho_0^\varepsilon,m_0^\varepsilon)(x), \quad &x\in(a,b),\\
  (\rho,m)|_{x=a}=(\rho^\varepsilon_-,m^\varepsilon_-), \quad &t\geq 0,\\
  (\rho,m)|_{x=b}=(\rho^\varepsilon_+,m^\varepsilon_+), \quad &t\geq 0,
 \end{cases}
\end{equation}
where $\rho^\varepsilon_\pm>0$, and $(\rho^\varepsilon_\pm,m^\varepsilon_\pm)\rightarrow(\rho_\pm,m_\pm)$
as $\varepsilon\rightarrow0$.
The imposition that the end-states for $\rho$ are strictly positive is to prevent
the possibility of cavitation.
One of the motivations for us to impose the Dirichlet boundary data for the approximate solutions
is to allow for the use of the weak maximum principle for obtaining the $L^\infty$ estimate for the approximate solutions
for the whole range $\gamma\in (1,\infty)$;
see Lemma \ref{lemma:max-principle}.

With this carefully designed initial-boundary value problem \eqref{approx}--\eqref{BCs},
the next goal of this section is now to establish the existence of globally defined approximate solutions.
Throughout this section, for $\beta\in(0,1)$, $C^{2+\beta}([a,b])$ and $C^{2+\beta,1+\frac{\beta}{2}}(Q_T)$ denote
the usual H\"{o}lder and parabolic H\"{o}lder spaces on the interval $[a,b]$ and the parabolic cylinder $Q_T:=[0,T]\times[a,b]$,
respectively, as defined in \cite{LadySoloUral}.

\begin{theorem}\label{artsol}
\, For $\varepsilon>0$, let $(\rho_0^\varepsilon,m_0^\varepsilon)\in\big(C^{2+\beta}([a,b])\big)^2$ be a sequence
of functions such that{\rm :}
\begin{enumerate}
\item[\rm (i)] $\inf_{a\leq x\leq b}\rho_0^\varepsilon(x)>0${\rm ;}

\smallskip
\item[\rm (ii)] $(\rho_0^\varepsilon,m_0^\varepsilon)$ satisfies \eqref{BCs} and the compatibility conditions at $x=a(\varepsilon)$ and $b(\varepsilon)${\rm :}
   \begin{equation*}
    \big(A(x) m_{0}^\varepsilon\big)_x =\varepsilon \big(A(x)\rho_{0,x}^\varepsilon\big)_x,
    \hspace{2.6mm}\big(A(x)\frac{(m_0^\varepsilon)^2}{\rho_0^\varepsilon}\big)_x +A(x) p_\delta(\rho_0^\varepsilon)_x=\varepsilon \big(A(x)m_0^\varepsilon\big)_x;
   \end{equation*}
\item[\rm (iii)] $\int_a^b\big(\frac{(m_0^\varepsilon)^2}{2\rho_0^\varepsilon}+\frac{\kappa(\rho_0^\varepsilon)^\gamma}{\gamma-1}\big)A(x)\,dx<\infty${\rm ;}

\smallskip
\item[\rm (iv)] $(\delta(\varepsilon),a(\varepsilon),b(\varepsilon))$ satisfy \eqref{3.2a}--\eqref{b}.
\end{enumerate}
Then there exists a unique global solution $(\rho^\varepsilon,m^\varepsilon)(t,x)$ of problem \eqref{approx}--\eqref{BCs} for
$\gamma\in(1,\infty)$ such that $(\rho^\varepsilon,m^\varepsilon)\in\big(C^{2+\beta,1+\frac{\beta}{2}}(Q_T)\big)^2$ with
$\inf_{Q_T}\rho^\varepsilon(t,x)>0$ for all $T>0$.
\end{theorem}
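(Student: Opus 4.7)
The plan is to combine classical quasilinear parabolic theory for short-time existence with a priori estimates that both preserve uniform parabolicity and preclude finite-time blow-up, and then to apply a continuation argument. Viewed as a system for $(\rho^\varepsilon,m^\varepsilon)$, the equations \eqref{approx} are uniformly parabolic whenever $\rho^\varepsilon$ is bounded away from zero, with diagonal principal part $\varepsilon I$ and lower-order terms that are smooth in $(\rho^\varepsilon,m^\varepsilon)$ on $\{\rho^\varepsilon>0\}$ and smooth in $x$ thanks to $A\in C^2$ and \eqref{1.4}--\eqref{1.5}. With the $C^{2+\beta}$ initial data, the strict positivity of $\rho_0^\varepsilon$, and the compatibility conditions (ii), the general theory of Chapter VII of \cite{LadySoloUral} produces a unique solution $(\rho^\varepsilon,m^\varepsilon)\in \bigl(C^{2+\beta,1+\beta/2}(Q_{T_*})\bigr)^2$ on some maximal interval $[0,T_*)$, characterized by the failure of either $\inf \rho^\varepsilon>0$ or $\|(\rho^\varepsilon,m^\varepsilon)\|_{C^{2+\beta,1+\beta/2}}<\infty$ as $t\to T_*$.

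The first key step is to show that strict positivity of $\rho^\varepsilon$ is preserved on any $[0,T]\subset[0,T_*)$. Treating $u^\varepsilon=m^\varepsilon/\rho^\varepsilon$ as a smooth, bounded coefficient on such a subinterval, the first equation of \eqref{approx} becomes a linear parabolic equation for $\rho^\varepsilon$ of the form
\begin{equation*}
\rho^\varepsilon_t-\varepsilon\rho^\varepsilon_{xx}-\bigl(\varepsilon\tfrac{A'}{A}-u^\varepsilon\bigr)\rho^\varepsilon_x+\bigl(u^\varepsilon_x+\tfrac{A'}{A}u^\varepsilon\bigr)\rho^\varepsilon=0,
\end{equation*}
with Dirichlet data $\rho^\varepsilon_\pm>0$ and positive initial data. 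The weak parabolic maximum principle then delivers a strictly positive lower bound $\rho^\varepsilon(t,x)\geq c(\varepsilon,T)>0$ on $[0,T]\times[a,b]$; note that the $\delta\rho^2$ correction in $p_\delta$ plays no role in this step but is crucial later for uniform non-cavitation as $\varepsilon\to 0$.

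With positivity secured, uniform pointwise bounds on $(\rho^\varepsilon,u^\varepsilon)$ follow from an invariant-region / weak maximum principle applied to the Riemann invariants of the system, which is the content of the forthcoming Lemma \ref{lemma:max-principle}: these invariants satisfy parabolic inequalities in which the geometric terms $A'/A$ enter as controlled lower-order perturbations, and their boundary and initial data are bounded. The resulting $L^\infty$ bounds, combined with $\rho^\varepsilon\geq c(\varepsilon,T)>0$, allow one to iterate parabolic Schauder estimates to obtain $C^{2+\beta,1+\beta/2}$ bounds on $Q_T$. Both blow-up alternatives characterizing $T_*$ are therefore excluded on any finite interval, so the local solution extends globally. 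Uniqueness follows from a standard energy argument applied to the difference of two smooth positive-density solutions on each $Q_T$.

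The main obstacle is that positivity of $\rho^\varepsilon$ and boundedness of $u^\varepsilon$ are genuinely coupled: a finite-time blow-up of $u^\varepsilon$ would invalidate the coefficient bounds used to propagate positivity of $\rho^\varepsilon$, while loss of positivity destroys the hyperbolic structure behind the Riemann-invariant bounds. The careful design of the approximating problem, including the modified pressure $p_\delta$, the strictly positive Dirichlet data $\rho^\varepsilon_\pm$, and the form $\varepsilon(m^\varepsilon_x+\tfrac{A'}{A}m^\varepsilon)_x$ of the viscosity in the momentum equation (which is naturally compatible with the area weight $A$), is precisely what decouples these two estimates and makes the continuation argument close uniformly for all $\gamma\in(1,\infty)$.
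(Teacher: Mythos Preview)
Your overall framework is right, but the positivity step is circular in a way that the paper's proof specifically works to avoid. The lower bound you extract from the weak maximum principle applied to
\[
\rho_t-\varepsilon\rho_{xx}-\bigl(\varepsilon\tfrac{A'}{A}-u\bigr)\rho_x+\bigl(u_x+\tfrac{A'}{A}u\bigr)\rho=0
\]
depends on the $L^\infty$ norm of the zeroth-order coefficient, hence on $\|u_x\|_{L^\infty(Q_T)}$ (or at least on $\int_0^T\|u_x(\tau,\cdot)\|_{L^\infty}\,d\tau$). Lemma~\ref{lemma:max-principle} controls only $\|u\|_{L^\infty}$, not $\|u_x\|_{L^\infty}$, and your subsequent Schauder iteration, which would bound $u_x$, itself requires a quantitative lower bound on $\rho$ to control the coefficients involving $m^2/\rho$. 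So as $T\uparrow T_*$ none of your constants is uniform, and the continuation argument does not close. Your final paragraph identifies the coupling correctly, but the design of the viscosity and of $p_\delta$ does not by itself decouple these two estimates.

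The paper breaks the loop by postponing the lower bound on $\rho$ to the very end of the chain. The energy estimate (Proposition~\ref{energy}), the bound on $\int_0^T\|\rho\|_{L^\infty}^{2\max\{2,\gamma\}}\,dt$ (Lemma~\ref{energy2}), and the Riemann-invariant bound on $\|(\rho,u)\|_{L^\infty}$ (Lemma~\ref{lemma:max-principle}) are all obtained \emph{without} any quantitative positivity of $\rho$. With $\|(\rho,u)\|_{L^\infty}$ in hand, the second-order energy estimate for $(\rho_x,m_x,\rho_{xx},m_{xx})$ in Lemma~\ref{energy4} also avoids $\rho^{-1}$, since one can rewrite $(\rho u^2)_x=2um_x-u^2\rho_x$; Lemma~\ref{energy6} then controls $\int\phi(\rho)\,dx$ and $\int\frac{|\rho_x|^2}{\rho^3}\,dx\,dt$ by a self-contained Gronwall argument. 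Only after all this, in Lemma~\ref{energy7}, does one combine Sobolev embedding with Lemmas~\ref{energy4}--\ref{energy6} to obtain $\int_0^T\|u_x\|_{L^\infty}\,dt\le C$ and finally apply the maximum principle to $v=\rho^{-1}$ to get $\rho\ge C^{-1}$. This specific ordering is the missing ingredient in your sketch.
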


We assume from now on
that $a=a(\varepsilon)$ and $b=b(\varepsilon)$ are constants depending on $\varepsilon>0$ such that
\begin{equation}\label{3.6}
\big(1+\big|\big(\frac{A'}{A}\big)'\big|\big)\varepsilon|a-b|\leq M,
\end{equation}
where $M>0$ is a constant, independent of $\varepsilon$.
For simplicity of presentation, we drop the explicit $\varepsilon$--dependence of all functions in this section, since
the results hold for each fixed $\varepsilon>0$.
Moreover, we use the following notation:
\begin{equation}\label{3.7}
h_\delta(\rho)=\rho e_\delta(\rho),\hspace{5mm} e_\delta(\rho)=\int_0^\rho\frac{p_\delta(s)}{s^2}\,ds.
\end{equation}

By the theory detailed in \cite{LadySoloUral}, in order to establish Theorem \ref{artsol},
it suffices to show that, for a
generic $C^{2,1}(Q_T)$--solution of \eqref{approx},
the following \it a priori \rm bound holds:
$$
\big\|\big(\rho, \rho^{-1},\frac{m}{\rho}\big)\big\|_{L^\infty(Q_T)}<\infty,
$$
where the bound may depend on $\varepsilon$, $T$, and the initial-boundary data.
Then this implies that
a generic solution of the equations with the prescribed regularity remains in a bounded region,
away from the singularities of the coefficients of the quasilinear parabolic system \eqref{approx}.
The results in \S 5 and Theorem 7.1 in \S 7 of \cite{LadySoloUral}
then lead to the conclusion of Theorem 3.1.

\smallskip
The remainder of this section consists
of  several energy estimates in order to deduce these bounds.

Our fundamental energy estimate concerns the control of the relative mechanical energy
for system \eqref{approx}.
We define a modified entropy pair:
\begin{equation*}
 \eta_\delta^*=\frac{m^2}{2\rho}+h_\delta(\rho),\hspace{6mm} q_\delta^*=\frac{m^3}{2\rho^2}+mh_\delta'(\rho),
\end{equation*}
and further  modify these to obtain
\begin{equation*}
\begin{split}
\overline{\eta_\delta^*}(\rho,m)
=\,&\eta_\delta^*(\rho,m)-\eta_\delta^*(\bar{\rho},\bar{m})-\nabla\eta_\delta^*(\bar{\rho},\bar{m})\cdot(\rho-\bar{\rho},m-\bar{m})\\
=\,&\frac{1}{2}\rho|u-\bar{u}|^2+\overline{h_\delta}(\rho,\bar{\rho})\geq0,
 \end{split}
\end{equation*}
where  $\overline{h_\delta}(\rho,\bar{\rho})=h_\delta(\rho)-h_\delta(\bar{\rho})-h_\delta'(\bar{\rho})(\rho-\bar{\rho})$.

\smallskip
Then the \it total relative mechanical energy \rm for \eqref{approx} with respect to the end-states $(\rho_\pm,m_\pm)$ through
$(\bar{\rho},\bar{m})$ is
\begin{equation*}
  E[\rho,m](t):=\int_a^b\overline{\eta_\delta^*}(\rho(t, x), m(t,x))A(x)\,dx\geq 0.
 \end{equation*}

\medskip
Now we obtain our main energy estimate.
\begin{proposition}\label{energy}
 Let $$E_0:=\sup_{\varepsilon>0}\int_a^b\overline{\eta_\delta^*}(\rho_0^\varepsilon(x),m_0^\varepsilon(x))A(x)\,dx<\infty.$$
 Then there exists $M>0$, independent of $\varepsilon$, such that, for all $\varepsilon>0$,
 \begin{equation}\label{energy1}
  \begin{split}
   &\sup_{t\in[0,T]}\int_a^b\big(\frac{1}{2}\rho |u-\bar{u}|^2 + \overline{h_\delta}(\rho,\bar{\rho})\big)A(x)\,dx\\
   &+\varepsilon\int_{Q_T}\Big(h_\delta''(\rho)\rho_x^2+\rho u_x^2+\big|\big(\frac{A'}{A}\big)'\rho u(u-\bar{u})\big|\Big)A(x)\,dx\,dt\leq M(E_0+1).
  \end{split}
 \end{equation}
Furthermore, for any $t\in[0,T]$, $|\{\rho(t,\cdot)>\frac{3}{2}\bar{\rho}\}|\leq c_2 E_0$, where $c_2>0$
may depend on $T$.
\end{proposition}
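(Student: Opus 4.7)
The plan is to derive an energy identity for $E(t):=E[\rho,m](t)$ from the viscous system \eqref{approx}, then close it via Gronwall's inequality. I would compute $\partial_t \overline{\eta_\delta^*}(\rho,m)$ by the chain rule (noting that $\bar{\rho}, \bar{m}$ are $t$-independent), substitute $\rho_t, m_t$ from \eqref{approx}, multiply by $A(x)$, and integrate over $(a,b)$ to obtain an identity of the form
\[
\frac{d}{dt}E(t) + \varepsilon \int_a^b A(x)\bigl(h_\delta''(\rho)\rho_x^2 + \rho u_x^2\bigr)\,dx = \mathcal{F}(t) + \mathcal{B}(t).
\]
Here $\mathcal{B}(t)$ collects boundary contributions at $x=a,b$; these are $t$-independent thanks to the Dirichlet data \eqref{BCs} and are bounded uniformly in $\varepsilon$ using the rate condition \eqref{3.6}. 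The interior term $\mathcal{F}(t)$ collects three kinds of contributions: terms from the gradients $\bar{\rho}_x,\bar{u}_x$ (compactly supported in $[-L_0,L_0]$ by \eqref{2.5}, hence bounded by $C(1+E(t))$ via Cauchy--Schwarz and the $L^\infty$-bounds on $\bar{\rho},\bar{u}$); terms involving $\frac{A'}{A}$ from the geometric source (controlled using $\|\frac{A'}{A}\|_{L^\infty}$ from \eqref{1.4}--\eqref{1.5}); and the indefinite-sign term $\varepsilon\int_a^b A\,(\frac{A'}{A})'\,m\,(u-\bar{u})\,dx$ produced by expanding $(m_x+\frac{A'}{A}m)_x$ in the momentum viscosity. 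The algebraic identity $m(\eta_\delta^*)_\rho + \frac{m^2}{\rho}(\eta_\delta^*)_m = q_\delta^*$ is essential here, as it combines the nonlinear flux with the geometric source into $\partial_x(A\,q_\delta^*)$, leaving only boundary contributions from that sector.

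The viscosity cross term is the delicate part: using $\|(\frac{A'}{A})'\|_{L^\infty}\leq C$ from \eqref{1.4}--\eqref{1.5} together with the pointwise bound $|\rho u(u-\bar{u})|\leq \rho|u-\bar{u}|^2 + C\rho\bar{u}^2$, one finds $|\mathcal{F}_{\text{visc}}(t)|\leq C\varepsilon(1+E(t))$. Combining with the other contributions yields $\mathcal{F}(t)+\mathcal{B}(t)\leq C(1+E(t))$, so Gronwall on $[0,T]$ with $E(0)\leq E_0$ gives $\sup_{[0,T]}E(t)\leq M(E_0+1)$. The dissipation bound in \eqref{energy1} follows from integrating the identity in $t$; the third piece, $\varepsilon\int_{Q_T}A\bigl|\bigl(\frac{A'}{A}\bigr)'\rho u(u-\bar{u})\bigr|\,dx\,dt$, is bounded via the same pointwise estimate and the now-controlled energy. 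For the measure statement, strict convexity of $h_\delta$ together with the positive lower bound $\bar{\rho}\geq\rho_{\min}>0$ (available since the approximate end-states $\rho_\pm^\varepsilon$ are strictly positive) gives $\overline{h_\delta}(\rho,\bar{\rho})\geq c_0>0$ on $\{\rho\geq\frac{3}{2}\bar{\rho}\}$; combined with $A\geq A_0$ from \eqref{1.4} and the energy bound, Chebyshev's inequality yields $|\{\rho(t,\cdot)>\frac{3}{2}\bar{\rho}\}|\leq c_2 E_0$.

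The principal obstacle is the tension between the variable coefficients $A(x)$ (entering through $\frac{A'}{A}$ and $(\frac{A'}{A})'$) and the requirement that bounds be independent of $\varepsilon$. The specific form $\varepsilon(m_x+\frac{A'}{A}m)_x$ chosen for the momentum viscosity in \eqref{approx} is engineered to vanish on constant states, keeping the Dirichlet boundary contributions $t$-independent and controllable via \eqref{3.6}; the unavoidable price is the $(\frac{A'}{A})'$-term in the interior, which is nonetheless harmless under \eqref{1.4}--\eqref{1.5} and is precisely what is recorded in the third dissipation integrand of \eqref{energy1}.
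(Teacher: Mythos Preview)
Your plan is correct and matches the paper's approach: derive the identity for $\partial_t(\overline{\eta_\delta^*}A)$ by multiplying \eqref{approx} by $(\overline{\eta_\delta^*})_\rho A$ and $(\overline{\eta_\delta^*})_m A$, integrate, control the various interior and boundary contributions, and close with Gronwall. The algebraic identity you cite, the role of the compact support of $(\bar\rho_x,\bar u_x)$, and the Chebyshev argument for the measure bound are all exactly as in the paper.

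There is, however, one misattribution that you should fix. The boundary contributions $\mathcal{B}(t)$ do \emph{not} require the rate condition \eqref{3.6}: at $x=a,b$ the flux collapses to $-\frac{m_\pm}{\rho_\pm}p_\delta(\rho_\pm)A(x)$, which is a constant bounded uniformly in $\varepsilon$ by \eqref{1.4} and the convergence of the approximate end-states. Where \eqref{3.6} is genuinely needed is precisely in the viscosity cross term you call $\mathcal{F}_{\text{visc}}$. Your pointwise bound $|\rho u(u-\bar u)|\le \rho|u-\bar u|^2+C\rho\bar u^2$ gives
\[
\varepsilon\int_a^b A\Bigl|\Bigl(\tfrac{A'}{A}\Bigr)'\Bigr|\,\rho\bar u^2\,dx
\;\le\; C\varepsilon\int_a^b A\rho\,dx
\;\le\; C\varepsilon\bigl(E(t)+A_1|b-a|\bigr),
\]
using $\rho\le M(\overline{h_\delta}(\rho,\bar\rho)+1)$; the term $\varepsilon|b-a|$ is exactly what \eqref{3.6} absorbs. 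So your claimed bound $|\mathcal{F}_{\text{visc}}(t)|\le C\varepsilon(1+E(t))$ is only valid once \eqref{3.6} is invoked here, not at the boundary. With this correction the argument goes through as you describe.
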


Before we prove Proposition \ref{energy},
we note that there exists a constant $c_1>0$, independent of $\varepsilon$,
depending only on $\bar{\rho}$ and $\gamma$ such
that $\overline{h_\delta}(\rho,\bar{\rho})\geq c_1\rho(\rho^\theta-\bar{\rho}^\theta)^2$.

\begin{proof} \,
Multiplying the first equation in \eqref{approx} by $\big(\overline{\eta_\delta^*}\big)_\rho A(x)$
and
the second equation by $\big(\overline{\eta_\delta^*}\big)_m A(x)$, and then adding them together,
we have
\begin{align}
&\big(\overline{\eta_\delta^*}A(x)\big)_t
  +\big(\big(q_\delta^*-m(\eta_\delta^*)_\rho(\bar{\rho},\bar{m})-(\tfrac{m^2}{\rho}+p_\delta(\rho))(\eta_\delta^*)_m(\bar{\rho},\bar{m})\big)A(x)\big)_x\nonumber\\
  &\,\,+\big((\eta_\delta^*)_\rho(\bar{\rho},\bar{m})\big)_xmA(x)+\big((\eta_\delta^*)_m(\bar{\rho},\bar{m})\big)_x\big(\tfrac{m^2}{\rho}+p_\delta(\rho)\big)A(x)\nonumber\\[1mm]
  &\,\,+p_\delta(\rho)(\eta_\delta^*)_m(\bar{\rho},\bar{m})A'(x)\nonumber\\
&=\varepsilon(A(x)\rho_x)_x(\overline{\eta_\delta^*})_\rho +\varepsilon A(x)\big(A(x)^{-1}(A(x)m)_x\big)_x(\overline{\eta_\delta^*})_m. \label{eq:star}
\end{align}
Observe that, at the end-points $a$ and $b$,
$$
q_\delta^*-m(\eta_\delta^*)_\rho(\bar{\rho},\bar{m})-\big(\frac{m^2}{\rho}+p_\delta(\rho)\big)\big(\eta_\delta^*\big)_m(\bar{\rho},\bar{m})
=-\frac{m_\pm}{\rho_\pm}p_\delta(\rho_\pm).
$$
Then
\begin{align*}
&\frac{dE}{dt}-\frac{m}{\rho}p_\delta(\rho)\vert_a^b\\
&+\int_a^b\Big(m\big((\eta_\delta^*)_\rho(\bar{\rho},\bar{m})\big)_x
   +\big((\eta_\delta^*)_m(\bar{\rho},\bar{m})\big)_x\big(\tfrac{m^2}{\rho}+p_\delta(\rho)\big)\\
  &\qquad\quad\,
   +\frac{A'(x)}{A(x)}(\eta_\delta^*)_m(\bar{\rho},\bar{m})p_\delta(\rho)-\varepsilon\big(\frac{A'(x)}{A(x)}\big)'m\big((\eta_\delta^*)_m-(\eta_\delta^*)_m(\bar{\rho},\bar{m})\big)\Big)A(x)\,dx\\
&=-\varepsilon\int_a^b (\rho_x,m_x)\nabla^2\overline{\eta_\delta^*}\,(\rho_x,m_x)^\top A(x)\,dx\\
  &\quad +\varepsilon\int_a^b \big(\rho_x((\eta^*_\delta)_\rho(\bar{\rho},\bar{m}))_x+m_x((\eta^*_\delta)_m(\bar{\rho},\bar{m}))_x\big)A(x)\,dx.
 \end{align*}
Note that $\rho+p_\de(\rho)\leq M\big(\overline{h_\de}(\rho,\bar\rho)+1\big)$ and
$\|A'\|_{L^1(-\infty,L_0)}\leq M$ and that $(\bar\rho_x, \bar m_x)$ are supported in $(-L_0,L_0)$.
Moreover, since $\bar m=0$ and, for $x>L_0$, $(\eta_\delta^*)_m(\bar{\rho},\bar{m})=0$  (recall we have used the
Galilean invariance to impose that $u_+=0$ without loss of generality), we have
\beqas
&\int_a^b\Big( m\big((\eta_\delta^*)_\rho(\bar{\rho},\bar{m})\big)_x+\big((\eta_\delta^*)_m(\bar{\rho},\bar{m})\big)_x\big(\tfrac{m^2}{\rho}+p_\delta(\rho)\big)\\
  &\qquad+\frac{A'(x)}{A(x)}(\eta_\delta^*)_m(\bar{\rho},\bar{m})p_\delta(\rho)\Big)A(x)\,dx\\
&\leq M(E+1).
\eeqas
 Moreover, integrating by parts gives
 \begin{align*}
  \varepsilon&\int_a^b \big(\rho_x((\eta^*_\delta)_\rho(\bar{\rho},\bar{m}))_x+m_x((\eta^*_\delta)_m(\bar{\rho},\bar{m}))_x\big)A(x)\,dx\\
&\leq M\varepsilon\big(E+L_0+\|A'\|_{L^1(-L_0,L_0)}\big).
 \end{align*}
 Finally, we have
 \begin{align*}
  \int_a^b&\varepsilon\Big|\big(\frac{A'}{A}\big)'\,m\big((\eta_\delta^*)_m-(\eta_\delta^*)_m(\bar{\rho},\bar{m})\big)\Big|A(x)\,dx\\
  &\leq \varepsilon ME+\eps M|b-a|\big(1+\big|\big(\frac{A'}{A}\big)'\big|\big).
 \end{align*}

Since $(\rho_x,m_x)\nabla^2\overline{\eta_\delta^*}(\rho_x,m_x)^\top$ dominates $h_\delta''(\rho)|\rho_x|^2+\rho|u_x|^2$ as in \cite{ChenPerep2},
we may combine all of these inequalities to obtain
\begin{align*}
  \frac{dE}{dt}+\varepsilon\int_a^b&\big((2\delta+\kappa\gamma\rho^{\gamma-2})|\rho_x|^2+\rho|u_x|^2\big)A(x)\,dx\leq M(E+1).
\end{align*}
 Hence, by the Gronwall inequality, we have
 \begin{equation}\label{++}
  E(T)+\varepsilon\int_{Q_T}\big((2\delta+\kappa\gamma\rho^{\gamma-2})|\rho_x|^2+\rho|u_x|^2\big)A(x)\,dx\leq M(T)(E_0+1).
 \end{equation}
 In particular, we obtain
 \begin{equation}\label{eta*}
  \sup_{t\in[0,T]}\int_a^b\big(\rho(u-\bar{u})^2+\overline{h_\delta}(\rho,\bar{\rho})\big)A(x)\, dx\leq M(E_0+1).
 \end{equation}

 Since $A(x)\geq A_0>0$, and $\overline{h_\delta}(\rho,\bar{\rho})\geq 0$ is quadratic in $(\rho-\bar{\rho})$
 for $\rho$ near $\bar{\rho}$ and grows as $\rho^{\max\{\gamma,2\}}$ for large $\rho$, we conclude
 \begin{equation*}
  |\{\rho(t,\cdot)>\frac{3}{2}\check{\rho}\}|\leq c E_0,
 \end{equation*}
 where $\check{\rho}=\max\{\rho_-,\rho_+\}$.
\qed
\end{proof}

The following lemma is a simple consequence of the fundamental theorem of calculus
and the main energy estimate (Proposition \ref{energy}),
so its proof is omitted; see the proof of \cite[Lemma 2.1]{ChenPerep2}.

\begin{lemma}\label{energy2}
 There exists $C=C(\varepsilon,T,E_0)>0$ such that
 \begin{equation*}
  \int_0^T\|\rho(t,\cdot)\|_{L^\infty(a,b)}^{2\max\{2,\gamma\}}dt\leq C.
 \end{equation*}
\end{lemma}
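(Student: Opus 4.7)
The plan is to upgrade the spatial integrability of $\rho$ from Proposition \ref{energy} to the $L^{2p}_t(L^\infty_x)$ bound asserted here (with $p:=\max\{\gamma,2\}$) via a one-dimensional Morrey-type argument: at each fixed time, identify a point where $\rho$ is a priori bounded, use the fundamental theorem of calculus to propagate this to a pointwise bound at the cost of an integral of $\rho_x$, and close the estimate via Cauchy--Schwarz against the dissipation controlled by Proposition \ref{energy}.

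The last assertion of Proposition \ref{energy} gives $|\{x\in(a,b):\rho(t,x)>\tfrac{3}{2}\check\rho\}|\leq c_2E_0$, with $\check\rho:=\max\{\rho_-,\rho_+\}$. Since $b(\varepsilon)-a(\varepsilon)\to\infty$ as $\varepsilon\to 0$ by \eqref{a}--\eqref{b}, for each $t\in[0,T]$ there exists $y(t)\in(a,b)$ with $\rho(t,y(t))\leq\tfrac{3}{2}\check\rho$. The fundamental theorem of calculus then yields
\begin{equation*}
\rho(t,x)^{p}\leq\big(\tfrac{3}{2}\check\rho\big)^{p}+p\int_a^b\rho^{p-1}|\rho_\xi|\,d\xi\qquad\text{for every }x\in(a,b).
\end{equation*}
Taking the supremum in $x$, squaring, and integrating in $t$ reduces the claim to controlling the quantity $\int_0^T\bigl(\int_a^b\rho^{p-1}|\rho_\xi|\,d\xi\bigr)^2\,dt$.

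I split this last integral via Cauchy--Schwarz according to the regime of $\gamma$. For $\gamma\geq 2$, so $p=\gamma$, writing $\rho^{\gamma-1}|\rho_\xi|=\rho^{\gamma/2}\cdot\rho^{(\gamma-2)/2}|\rho_\xi|$ bounds the squared inner integral by $\bigl(\int\rho^\gamma\,d\xi\bigr)\cdot\bigl(\int\rho^{\gamma-2}|\rho_\xi|^2\,d\xi\bigr)$. Proposition \ref{energy} controls the second factor in $L^1_t$ via $h_\delta''(\rho)\geq\kappa\gamma\rho^{\gamma-2}$ (paying an $\varepsilon^{-1}$ weight), while the first factor is bounded uniformly in $t$ using the $\rho^\gamma$-growth of $\overline{h_\delta}$ at infinity together with $A\geq A_0$ and the constraint $b-a\leq M\varepsilon^{-1}$ implicit in \eqref{3.6}. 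For $\gamma<2$, so $p=2$, I instead use $h_\delta''(\rho)\geq 2\delta$ together with the $\delta(\rho-\bar\rho)^2$ component of $\overline{h_\delta}$ to obtain
\begin{equation*}
\Big(\int_a^b\rho|\rho_\xi|\,d\xi\Big)^2\leq\int_a^b\rho^2\,d\xi\cdot\int_a^b|\rho_\xi|^2\,d\xi,
\end{equation*}
where both factors are bounded by Proposition \ref{energy} with constants depending on $\varepsilon$ and $\delta=\delta(\varepsilon)$.

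The main (and essentially only) difficulty lies in the regime $\gamma<2$: there the natural dissipation $\kappa\gamma\rho^{\gamma-2}|\rho_\xi|^2$ degenerates as $\rho\to 0$ and cannot directly control $\rho^{p-1}|\rho_\xi|$. This is precisely what the artificial viscosity $\delta\rho^2$ in the pressure is designed to repair: it supplies the uniform-in-$\rho$ derivative bound $2\delta|\rho_\xi|^2$ as well as the $L^2_x$-control of $\rho$, both with $\delta^{-1}$ weights. Assembling the two cases yields the claimed bound with $C=C(\varepsilon,T,E_0)$ depending on $\varepsilon$ through $\delta(\varepsilon)$ and the window size $b(\varepsilon)-a(\varepsilon)$.
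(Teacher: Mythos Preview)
Your proposal is correct and follows essentially the same approach the paper indicates (fundamental theorem of calculus combined with the main energy estimate, Proposition~\ref{energy}); the paper omits the proof and refers to \cite[Lemma 2.1]{ChenPerep2}, which proceeds exactly along these lines with the same Cauchy--Schwarz splitting. One small point to tighten: your justification for the existence of $y(t)$ invokes $b(\varepsilon)-a(\varepsilon)\to\infty$, which is a statement about the limit and does not directly apply at a fixed $\varepsilon>0$; it is simpler (and works for every $\varepsilon$) to take $y(t)=a$ or $y(t)=b$, where the Dirichlet conditions \eqref{BCs} give $\rho(t,a)=\rho_-^\varepsilon$ and $\rho(t,b)=\rho_+^\varepsilon$ directly.
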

From now on, the constants labelled $C$ may depend on $\varepsilon$, whereas the constants labelled $M$
are independent of $\varepsilon$.
The following maximum principle result for the Riemann invariants
gives \textit{a priori} control on $\|(\rho,u)\|_{L^\infty}$
in terms of the Dirichlet boundary data and initial data.

\begin{lemma}\label{lemma:max-principle}
 There exists $C=C(\varepsilon,T,E_0)$ such that, for any $t\in[0,T]$,
 \begin{equation*}
  \|(\rho,u)\|_{L^\infty(Q_T)}\leq C\big(\|u_0+R(\rho_0)\|_{L^\infty(a,b)}+\|u_0-R(\rho_0)\|_{L^\infty(a,b)}\big)+C(\rho_\pm,u_\pm),
 \end{equation*}
 where
 $$
 R(\rho):=\int_0^\rho\frac{\sqrt{p_\delta'(s)}}{s}\,ds.
 $$
\end{lemma}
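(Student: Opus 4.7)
The plan is to pass to the Riemann-invariant coordinates $w = u + R(\rho)$ and $z = u - R(\rho)$ (with $u = m/\rho$ and $R'(\rho) = c/\rho$, $c := \sqrt{p_\delta'(\rho)}$), derive parabolic PDEs for $w$ and $z$ from \eqref{approx}, and apply the classical parabolic maximum principle on $Q_T$. Since the constant in the statement may depend on $\varepsilon$ and $T$, only qualitative (not $\varepsilon$-uniform) control is needed, which considerably simplifies the book-keeping.

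I would first rewrite \eqref{approx} in non-conservative form for the primitive variables $(\rho, u)$: the continuity equation becomes
\begin{equation*}
\rho_t + u\rho_x + \rho u_x + \tfrac{A'}{A}\rho u = \varepsilon\bigl(\rho_{xx} + \tfrac{A'}{A}\rho_x\bigr),
\end{equation*}
and eliminating $\rho_t$ from the momentum equation (after dividing by $\rho$) yields a transport-diffusion equation for $u$. Forming $w_t = u_t + (c/\rho)\rho_t$ and collecting terms produces
\begin{equation*}
w_t + (u+c)w_x - \varepsilon w_{xx} = -\tfrac{A'}{A}cu + \varepsilon\tfrac{A'}{A}w_x + \varepsilon\bigl(\tfrac{A'}{A}\bigr)'u + \mathcal{N}_w,
\end{equation*}
where $\mathcal{N}_w := -\varepsilon(c/\rho)_\rho\,\rho_x^2 + (2\varepsilon/\rho)\rho_x u_x$ gathers the viscous first-order contributions that do not fit the Riemann-invariant framework cleanly. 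An analogous identity for $z$ has $(u+c)$ replaced by $(u-c)$, the source sign on $(A'/A)cu$ flipped, and $\mathcal{N}_z := +\varepsilon(c/\rho)_\rho\,\rho_x^2 + (2\varepsilon/\rho)\rho_x u_x$.

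Applying the parabolic maximum principle: either $w$ attains its supremum on the parabolic boundary of $Q_T$, where it is controlled by $\|u_0^\varepsilon + R(\rho_0^\varepsilon)\|_{L^\infty(a,b)}$ together with the Dirichlet values $u_\pm^\varepsilon + R(\rho_\pm^\varepsilon)$; or else at an interior maximum $(t_0, x_0)$ one has $w_x = 0$ and $w_{xx}\leq 0$. The identity $w_x = 0$ forces $u_x = -(c/\rho)\rho_x$, whence $\mathcal{N}_w$ reduces to $-\varepsilon\bigl[c'/\rho + c/\rho^2\bigr]\rho_x^2 \leq 0$, using the positivity $c, c' > 0$ that follows from $p_\delta'' > 0$ for all $\gamma > 1$ and $\delta > 0$ --- this is where the added pressure term $\delta\rho^2$ in \eqref{3.2a} plays its structural role. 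The remaining source terms then satisfy
\begin{equation*}
w_t(t_0, x_0) \leq \bigl\|\tfrac{A'}{A}\bigr\|_{L^\infty}\,c(\rho)\,|u| + \varepsilon\bigl\|\bigl(\tfrac{A'}{A}\bigr)'\bigr\|_{L^\infty}|u| \leq C(\varepsilon, T)\bigl(1 + |w| + |z|\bigr)(t_0, x_0),
\end{equation*}
after noting that $\rho = R^{-1}((w-z)/2)$ and $c(\rho)$ are continuous functions of $|w - z|$ and invoking \eqref{1.5}, \eqref{3.6}. A symmetric argument, applied at an interior minimum of $z$ where $\mathcal{N}_z$ reduces to $+\varepsilon[c'/\rho + c/\rho^2]\rho_x^2 \geq 0$, yields a lower bound for $z_t$. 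Combining via Gronwall's inequality gives the $L^\infty$ estimate for $(w, z)$, and hence for $(\rho, u) = (R^{-1}((w-z)/2), (w+z)/2)$ since $R^{-1}$ is continuous on compact subsets of $[0,\infty)$.

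The main obstacle is identifying which terms in $\mathcal{N}_w$ (resp. $\mathcal{N}_z$) dissipate at an extremum of $w$ (resp. $z$) and which act as genuine forcing. The delicate piece is the viscous cross-term $(2\varepsilon/\rho)\rho_x u_x$, which is sign-indefinite \emph{a priori}; at an interior max of $w$ it becomes $-(2\varepsilon c/\rho^2)\rho_x^2$ and combines with $-\varepsilon(c/\rho)_\rho\rho_x^2$ to yield the strictly negative contribution above. This cancellation relies on $c' > 0$, i.e., on the strict convexity $p_\delta'' > 0$ that the $\delta\rho^2$ regularisation maintains uniformly across $\gamma \in (1,\infty)$. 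The geometric source $-(A'/A)cu$ is linear in $u$ and hence in $w + z$, so is easily absorbed into Gronwall at the price of constants depending on $\varepsilon$ and $T$; removing this $\varepsilon$-dependence is precisely the task reserved for \S \ref{sec:uniform}--\S 5.
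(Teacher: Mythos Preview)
Your derivation of the Riemann-invariant equations and the sign analysis of $\mathcal{N}_w$ are essentially correct and amount to the paper's quasiconvexity decomposition rewritten in $(\rho,u)$ coordinates. Indeed, substituting $u_x = w_x - (c/\rho)\rho_x$ into $\mathcal{N}_w$ splits it into the nonpositive term $-\varepsilon(c'/\rho + c/\rho^2)\rho_x^2$ plus $(2\varepsilon/\rho)\rho_x\,w_x$, which can be absorbed into the drift coefficient; this is exactly what the paper does in $(\rho,m)$ variables via $(\rho_x,m_x)=\alpha\nabla w+\beta\nabla^\perp w$. (A minor point: $p''>0$ already holds without the $\delta\rho^2$ term, so your attribution of the sign $c'>0$ to the regularisation is not accurate.)

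The genuine gap is the Gronwall closure. Your claim that
\[
\big\|\tfrac{A'}{A}\big\|_{L^\infty}c(\rho)|u| + \varepsilon\big\|\big(\tfrac{A'}{A}\big)'\big\|_{L^\infty}|u| \leq C(1+|w|+|z|)
\]
is false: since $c(\rho)$ and $R(\rho)=(w-z)/2$ share the same growth rate (both $\sim\rho^\theta$ for large $\rho$), the geometric forcing $c(\rho)|u|$ scales like $(w-z)(w+z)$, i.e.\ \emph{quadratically} in the Riemann invariants. A differential inequality of the form $(W+Z)'\leq C(W+Z)^2$ permits finite-time blowup and does not close on $[0,T]$; continuity of $c\circ R^{-1}$ alone cannot rescue this.

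The paper resolves the circularity by invoking the energy-derived bound of Lemma~\ref{energy2}, namely $\int_0^T\|\rho(\tau,\cdot)\|_{L^\infty}^{2\max\{2,\gamma\}}\,d\tau\leq C(\varepsilon)$, which follows from Proposition~\ref{energy}. This furnishes \emph{a priori} time-integrability of $\|\rho\|_{L^\infty}^{\max\{1,\gamma-1\}}$, decoupling the $\rho$- and $u$-estimates: after the maximum principle and H\"older,
\[
\max_{Q_t}|u|^2 \leq C + C\Big(\int_0^t\big(1+\|\rho\|_{L^\infty}^{\max\{1,\gamma-1\}}\big)\,d\tau\Big)\Big(\int_0^t\|u\|_{L^\infty}^2\,d\tau\Big),
\]
and now the first factor is bounded independently of $u$, so Gronwall closes linearly for $\|u\|_{L^\infty}^2$. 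The bound on $R(\rho)$ then follows directly from the same time-integrability. Without this external input from the energy estimate, your argument does not close.
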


\begin{proof}
 \, The eigenvalues of the Euler system with pressure function $p_\delta(\rho)$ are
$$
\lambda_1:=u-\sqrt{p_\delta'(\rho)},\hspace{6mm}\lambda_2:=u+\sqrt{p_\delta'(\rho)},
$$
and the corresponding Riemann invariants are
$$
w:=u+R(\rho),\hspace{6mm}z:=u-R(\rho).
$$
We first note that these Riemann invariants are quasiconvex:
$$
\nabla^\perp w\nabla^2w(\nabla^\perp w)^\top\geq0,\hspace{6mm}
-\nabla^\perp z\nabla^2z(\nabla^\perp z)^\top\geq0.
$$
Following \cite[Lemma 2.2]{ChenPerep2}, we multiply the first equation in \eqref{approx} by $w_\rho$,
multiply the second by $w_m$, and then add them together
with a short calculation to derive that
 \begin{align*}
 w_t+\big(\lambda_2-\varepsilon\frac{A'}{A}\big)w_x-\varepsilon w_{xx}
 +\varepsilon(\rho_x,m_x)\nabla^2w(\rho_x,m_x)^\top=\varepsilon \big(\frac{A'}{A}\big)' u-u\sqrt{p_\delta'}\frac{A'}{A}.
 \end{align*}

Notice that
$$
(\rho_x ,m_x)=\alpha\nabla w+\beta\nabla^\perp w,
$$
where
$$
\alpha=\frac{w_x}{|\nabla w|^2},\hspace{6mm}\beta=\frac{\rho_xw_m-m_xw_\rho}{|\nabla w|^2}.
$$
With this notation, we re-write the above as
\begin{align*}
 w_t+\lambda w_x-\varepsilon w_{xx}
 =-\varepsilon\beta^2\nabla^\perp w\nabla^2w(\nabla^\perp w)^\top
   -u\sqrt{p_\delta'}\frac{A'}{A}+\varepsilon \big(\frac{A'}{A}\big)' u,
\end{align*}
where
$$
\lambda:=\lambda_2-\varepsilon\frac{A'}{A}+\varepsilon\alpha\frac{\nabla w\nabla^2w(\nabla w)^\top}{|\nabla w|^2}
 +2\varepsilon\beta\frac{\nabla^\perp w\nabla^2 w(\nabla w)^\top}{|\nabla w|^2}.
$$
Then, using the quasiconvexity property of $w$, we find that
$$
\tilde{w}_t+\lambda\tilde{w}_x-\varepsilon \tilde{w}_{xx}\leq0
$$
for $\tilde{w}$ defined by
$$
\tilde{w}(t,x):=w(t,x)-\int_0^t\big\|u\sqrt{p_\delta'}\frac{A'}{A}-\varepsilon\big(\frac{A'}{A}\big)'u\big\|_{L^\infty(a,b)}\,d\tau.
$$
The maximum principle for parabolic equations gives us
$$
\max_{Q_t}\tilde{w}\leq\max\{w_0,\max_{[0,t]\times(\{a\}\cup\{b\})}\tilde{w}\}.
$$
Now we have
\begin{align*}
&\max_{[0,t]\times(\{a\}\cup\{b\})}\tilde{w}\\
&\leq\,\max\{u_-+R(\rho_-),u_++R(\rho_+)\}\\
&\quad +\check{\rho}\sup_{(a,b)}\big\{\big|\frac{A'}{A}\big|,\big|\big(\frac{A'}{A}\big)'\big|\big\}
\int_0^t\big(1+\|\rho(\tau,\cdot)\|_{L^\infty(a,b)}^{\frac{1}{2}\max\{1,\gamma-1\}}\big)\|u(\tau,\cdot)\|_{L^\infty(a,b)}\,d\tau.
\end{align*}
Thus, we obtain
\begin{align*}
 \max_{Q_t}w\leq &\,\max_{(a,b)}w_0+\max\{u_-+R(\rho_-),u_++R(\rho_+)\}\\
 &+C\int_0^t\big(1+\|\rho(\tau,\cdot)\|_{L^\infty(a,b)}^{\frac{1}{2}\max\{1,\gamma-1\}}\big)\|u(\tau,\cdot)\|_{L^\infty(a,b)}\,d\tau.
\end{align*}
Similarly,
\begin{align*}
 \max_{Q_t}(-z)\leq &\, \max_{(a,b)}(-z_0)+\max\{u_--R(\rho_-),u_+-R(\rho_+)\}\\
 &+C\int_0^t\big(1+\|\rho(\tau,\cdot)\|_{L^\infty(a,b)}^{\frac{1}{2}\max\{1,\gamma-1\}}\big)\|u(\tau,\cdot)\|_{L^\infty(a,b)}\,d\tau.
\end{align*}
Noting that $w_0$ satisfies the corresponding boundary condition to \eqref{BCs} and $\rho\geq0$, we see that
\begin{align*}
 &\max_{Q_t}|u|\\
 &\,\,\leq \,\|w_0\|_{L^\infty}+\|z_0\|_{L^\infty}
 +C\int_0^t\big(1+\|\rho(\tau,\cdot)\|_{L^\infty(a,b)}^{\frac{1}{2}\max\{1,\gamma-1\}}\big)\|u(\tau,\cdot)\|_{L^\infty(a,b)}\,d\tau.
\end{align*}
Since $\max\{1,\gamma-1\}<2\gamma$, we obtain that, by the H\"{o}lder inequality and Lemma 3.1,
\begin{align*}
\max_{Q_t}|u|^2\leq&\, C\big(\max_{(a,b)}|w_0|+\max_{(a,b)}|z_0|\big)^2\\
& +C\Big(\int_0^t\big(1+\|\rho(\tau,\cdot)\|_{L^\infty(a,b)}^{\max\{1,\gamma-1\}}\big)\,d\tau\Big)
  \Big(\int_0^t\|u(\tau,\cdot)\|_{L^\infty(a,b)}^2\,d\tau\Big)\\
\leq &\, C\big(\max_{(a,b)}|w_0|+\max_{(a,b)}|z_0|\big)^2 +C\int_0^t\|u(\tau,\cdot)\|_{L^\infty(a,b)}^2\,d\tau.
\end{align*}
We then conclude the estimate for $\|u\|_{L^\infty(Q_t)}$ by using the Gronwall inequality.

The estimate for $\rho$ is now a direct consequence of the estimates of the Riemann invariants
and the estimate of $u$ above. In particular, we have
\beqas
 \max_{Q_t}R(\rho)=\tfrac{1}{2}\max_{Q_t}(w-z)\leq C+C\int_0^t\big(1+\|\rho(\tau,\cdot)\|_{L^\infty(a,b)}^{\tfrac{1}{2}\max\{1,\gamma-1\}}\big)\,d\tau\leq C.\nonumber
\eeqas
\qed\end{proof}

The next result provides the afore-mentioned higher order energy estimates
that we require to conclude the upper bound
of $\|\rho^{-1}\|_{L^\infty(a,b)}$.

\begin{lemma}\label{energy4}
	There exists $C(\varepsilon,\|(\rho_0,u_0)\|_{L^\infty(a,b)},\|(\rho_0,m_0)\|_{H^1(a,b)}, T,\gamma)>0$ such that
	\begin{equation*}
	\sup_{t\in[0,T]}\int_a^b\big(|\rho_x|^2+|m_x|^2\big) dx+\int_{Q_T}\big(|\rho_{xx}|^2+|m_{xx}|^2\big)\, dx\,dt\leq C.
	\end{equation*}
\end{lemma}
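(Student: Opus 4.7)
The plan is a standard parabolic $H^1$ energy estimate for the viscous system \eqref{approx}: multiply each equation by the negative of its own second spatial derivative, integrate in $x$, and use the viscous dissipation to absorb the resulting nonlinear terms via Young's inequality; Gronwall then closes the bound on $[0,T]$.

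For the continuity equation I would multiply by $-\rho_{xx}$ and integrate over $(a,b)$. Since the Dirichlet data in \eqref{BCs} are independent of $t$, $\rho_t$ vanishes at $x=a,b$, so integration by parts on the time-derivative term leaves no boundary contribution and yields
$$
\tfrac12\tfrac{d}{dt}\int_a^b\rho_x^2\,dx + \varepsilon\int_a^b\rho_{xx}^2\,dx = \int_a^b\bigl(m_x + \tfrac{A'}{A}m - \varepsilon\tfrac{A'}{A}\rho_x\bigr)\rho_{xx}\,dx.
$$
Lemma \ref{lemma:max-principle} provides $\|(\rho,u)\|_{L^\infty(Q_T)}\leq C$, hence $\|m\|_{L^\infty(Q_T)}\leq C$, and $|b-a|$ is finite for fixed $\varepsilon>0$; so Cauchy--Schwarz and Young's inequality bound the right-hand side by $\tfrac{\varepsilon}{2}\int\rho_{xx}^2\,dx + C\int(\rho_x^2+m_x^2)\,dx + C$.

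For the momentum equation I would analogously multiply by $-m_{xx}$ and integrate. Again the Dirichlet condition on $m$ kills the boundary contribution from $-\int m_t m_{xx}\,dx$, yielding $\tfrac12\tfrac{d}{dt}\int m_x^2\,dx + \varepsilon\int m_{xx}^2\,dx$ on the left, and, after expanding $(m^2/\rho)_x = 2u\,m_x-u^2\rho_x$, $p_\delta(\rho)_x=p_\delta'(\rho)\rho_x$, and $\varepsilon(\tfrac{A'}{A}m)_x$, a sum of terms on the right each of which is a product of an $L^\infty$-bounded coefficient with $m_x\cdot m_{xx}$, $\rho_x\cdot m_{xx}$, or $m_{xx}$. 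The crucial observation is that the coefficients $u$, $u^2$, $p_\delta'(\rho) = \kappa\gamma\rho^{\gamma-1}+2\delta\rho$, $\tfrac{A'}{A}u m$, $\varepsilon(\tfrac{A'}{A})'m$, and $\varepsilon\tfrac{A'}{A}$ are all uniformly bounded on $Q_T$ by Lemma \ref{lemma:max-principle}, irrespective of the size of $\gamma>1$. Cauchy--Schwarz and Young again give a bound of the form $\tfrac{\varepsilon}{2}\int m_{xx}^2\,dx + C\int(\rho_x^2+m_x^2)\,dx + C$.

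Summing the two estimates and absorbing the $\tfrac{\varepsilon}{2}$-terms into the dissipation yields
$$
\tfrac{d}{dt}\!\int_a^b(\rho_x^2+m_x^2)\,dx + \varepsilon\!\int_a^b(\rho_{xx}^2+m_{xx}^2)\,dx \leq C\int_a^b(\rho_x^2+m_x^2)\,dx + C,
$$
and Gronwall gives the $\sup_{t\in[0,T]}$ bound on $\int(\rho_x^2+m_x^2)\,dx$; integrating in $t$ then yields the space-time $L^2$ bound on $(\rho_{xx},m_{xx})$. I do not expect a serious obstacle: the only subtlety is ensuring that the boundary terms from the $-\rho_{xx},-m_{xx}$ multipliers vanish (which they do, precisely because the Dirichlet data are time-independent) and that one applies Cauchy--Schwarz \emph{directly} against $\rho_{xx},m_{xx}$ on the nonlinear right-hand side, rather than performing a second integration by parts that would reintroduce $u_x$ or uncontrolled boundary pieces.
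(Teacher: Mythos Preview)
Your proposal is correct and follows essentially the same approach as the paper: multiply each equation by (the negative of) its own second spatial derivative, use the time-independent Dirichlet data to kill the boundary terms from integrating $\rho_t\rho_{xx}$ and $m_t m_{xx}$ by parts, invoke the $L^\infty$ bounds of Lemma~\ref{lemma:max-principle} to control all coefficients, absorb the second derivatives via Young's inequality, and close by Gronwall. The paper's writeup integrates directly over $Q_T$ and phrases the absorption with a small parameter $\Delta$, but the content is identical.
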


\begin{proof}
 \, Throughout this proof,
we make frequent use of the upper bounds on $(\rho, u)$ from Lemma \ref{lemma:max-principle}.

Multiplying the first equation in \eqref{approx} by $\rho_{xx}$ and the second by $m_{xx}$,
we obtain
\begin{align*}
&(\rho_t\rho_x)_x -\frac{1}{2}\big(|\rho_x|^2\big)_t
 +(m_tm_x)_x-\frac{1}{2}\big(|m_x|^2\big)_t-\varepsilon\big(|\rho_{xx}|^2+|m_{xx}|^2\big)\\
&= -m_x\rho_{xx}-(\rho u^2+p_\delta)_xm_{xx}-\frac{A'}{A}m\rho_{xx}-\frac{A'}{A}\big(\rho u^2m_{xx}-\varepsilon\rho_x\rho_{xx}\big)\\
&\quad +\varepsilon\big(\frac{A'}{A}m\big)_xm_{xx}.
\end{align*}
Integrating over $Q_T$ and recalling that $\rho$ and $m$ are constant on $x=a$ and $b$, we have
\begin{align*}
\frac{1}{2}\int_a^b& \big(|\rho_x|^2+|m_x|^2\big)\big\vert_0^T\,dx+\varepsilon\int_{Q_T}\big(|\rho_{xx}|^2+|m_{xx}|^2\big)\,dx\,dt\\
=&\int_{Q_T}\big(m_x\rho_{xx}+\frac{A'}{A}m\rho_{xx}\big)\,dx\,dt+\int_{Q_T}(\rho u^2 +p_\delta)_xm_{xx}\,dx\,dt\\
 &+\int_{Q_T}\frac{A'}{A}\big(\rho u^2m_{xx}-\varepsilon\rho_x\rho_{xx}\big)\,dx\,dt-\varepsilon\int_{Q_T}\big(\frac{A'}{A}m\big)_xm_{xx}\,dx\,dt.
\end{align*}
Thus, using the uniform bounds from the maximum principle, we have
\begin{align*}
 \int_a^b&\big(|\rho_x(T,x)|^2+|m_x(T,x)|^2\big)\,dx+\varepsilon\int_{Q_T}\big(|\rho_{xx}|^2+|m_{xx}|^2\big)\,dx\,dt\\
 \leq&\,\Delta\int_{Q_T}\big(|\rho_{xx}|^2+|m_{xx}|^2\big)\,dx\,dt+\int_a^b\big(|\rho_{0,x}(x)|^2+|m_{0,x}(x)|^2\big)\,dx\\
 &+C_\Delta \int_{Q_T}\big(|\rho_x|^2+|m_x|^2\big)\,dx\,dt+C_\Delta,
\end{align*}
where $\Delta>0$.
Absorbing the first term into the left by taking $\De$ small, we conclude the expected estimate.
\qed\end{proof}

It now remains only to demonstrate an \textit{a priori} upper bound for $\rho^{-1}$.
To this end, we define
\begin{equation*}
 \phi(\rho)=\begin{cases}
             \frac{1}{\rho}-\frac{1}{\tilde{\rho}}+\frac{\rho-\tilde{\rho}}{\tilde{\rho}^2}, & \rho<\tilde{\rho},\\
             0, &\rho>\tilde{\rho}
            \end{cases}
\end{equation*}
for some $\tilde{\rho}>0$. Note that
$$
\phi'(\rho)=\big(-\frac{1}{\rho^2}+\frac{1}{\tilde{\rho}^2}\big)\chi_{\{\rho<\tilde{\rho}\}},
\qquad \phi''(\rho)=\frac{2}{\rho^3}\chi_{\{\rho<\tilde{\rho}\}}.
$$

\begin{lemma}\label{energy6}
 There exists $C>0$ depending on $\|\phi(\rho_0)\|_{L^1(a,b)}$ and the other parameters of the problem such that
 $$\sup_{t\in[0,T]}\int_a^b\phi(\rho(t,\cdot))\,dx+\int_{Q_T}\frac{|\rho_x|^2}{\rho^3}\,dx\,dt\leq C.$$
\end{lemma}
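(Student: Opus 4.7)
The plan is to test the continuity equation \eqref{approx}$_1$ against $\phi'(\rho)$. First, choose $\tilde{\rho}>0$ strictly smaller than both $\rho_-^\varepsilon$ and $\rho_+^\varepsilon$; this is possible since the Dirichlet data for $\rho$ are strictly positive. With this choice, $\phi'(\rho^\varepsilon_\pm)=0$ at $x=a,b$, so all boundary contributions that arise from integration by parts of $\phi'(\rho)m_x$ and $\varepsilon\phi'(\rho)\rho_{xx}$ vanish. Testing the first equation in \eqref{approx} against $\phi'(\rho)$, integrating over $(a,b)$, and integrating by parts yields
\begin{equation*}
\frac{d}{dt}\int_a^b\phi(\rho)\,dx+2\varepsilon\int_a^b\frac{|\rho_x|^2}{\rho^3}\chi_{\{\rho<\tilde{\rho}\}}\,dx
=\int_a^b m\phi''(\rho)\rho_x\,dx-\int_a^b\frac{A'}{A}m\phi'(\rho)\,dx+\varepsilon\int_a^b\frac{A'}{A}\rho_x\phi'(\rho)\,dx,
\end{equation*}
where I have used $\varepsilon\int\phi'(\rho)\rho_{xx}\,dx=-\varepsilon\int\phi''(\rho)|\rho_x|^2\,dx=-2\varepsilon\int\tfrac{|\rho_x|^2}{\rho^3}\chi_{\{\rho<\tilde{\rho}\}}\,dx$ and the analogous identity converting $\int\phi'(\rho)m_x\,dx$ into $-\int m\phi''(\rho)\rho_x\,dx$.

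The three terms on the right-hand side are controlled via Cauchy--Schwarz and Young's inequality using two basic inputs: the $L^\infty$ bound $\|u\|_{L^\infty(Q_T)}\leq C$ from Lemma \ref{lemma:max-principle}, and the elementary pointwise inequality $\frac{1}{\rho}\chi_{\{\rho<\tilde{\rho}\}}\leq \phi(\rho)+\frac{2}{\tilde{\rho}}$, which is immediate from the definition of $\phi$. For the leading term, one writes $m\phi''(\rho)\rho_x = 2\frac{u\rho_x}{\rho^2}\chi_{\{\rho<\tilde{\rho}\}}$ and applies Young's inequality to absorb $\frac{\varepsilon}{2}\int\frac{|\rho_x|^2}{\rho^3}\chi_{\{\rho<\tilde{\rho}\}}\,dx$ into the left-hand side, leaving $C_\varepsilon\int u^2(\tfrac{1}{\rho}\chi)\,dx\leq C_\varepsilon\int\phi(\rho)\,dx+C_\varepsilon$. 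The $A'/A$ terms are treated analogously, using $|\phi'(\rho)|\leq\frac{1}{\rho^2}\chi_{\{\rho<\tilde{\rho}\}}$, $|m\phi'(\rho)|\leq|u|(\tfrac{1}{\rho}+\tfrac{1}{\tilde\rho})\chi_{\{\rho<\tilde{\rho}\}}$, and the fact that $A'/A$ is bounded. Together, these yield the differential inequality
\begin{equation*}
\frac{d}{dt}\int_a^b\phi(\rho)\,dx+\varepsilon\int_a^b\frac{|\rho_x|^2}{\rho^3}\chi_{\{\rho<\tilde{\rho}\}}\,dx\leq C_\varepsilon\int_a^b\phi(\rho)\,dx+C_\varepsilon.
\end{equation*}

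Gronwall's inequality then gives the claimed uniform bound on $\int_a^b\phi(\rho(t,\cdot))\,dx$ for $t\in[0,T]$, together with $\int_{Q_T}\frac{|\rho_x|^2}{\rho^3}\chi_{\{\rho<\tilde{\rho}\}}\,dx\,dt\leq C$. On the complementary set, the bound $\frac{|\rho_x|^2}{\rho^3}\leq\tilde{\rho}^{-3}|\rho_x|^2$ combined with the $H^1$ estimate from Lemma \ref{energy4} yields $\int_{Q_T}\frac{|\rho_x|^2}{\rho^3}\chi_{\{\rho\geq\tilde{\rho}\}}\,dx\,dt\leq C$, completing the proof.

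The main obstacle is the advective term $\int m\phi''(\rho)\rho_x\,dx$: the weight $\phi''(\rho)=2\rho^{-3}\chi_{\{\rho<\tilde{\rho}\}}$ is as singular as the dissipation itself, and a naive Cauchy--Schwarz splitting produces $\int u^2\rho^{-1}\chi\,dx$, which is \emph{a priori} uncontrolled. It closes only because Lemma \ref{lemma:max-principle} provides the uniform $L^\infty$ bound on $u$, and because $\rho^{-1}\chi_{\{\rho<\tilde{\rho}\}}$ is (up to an additive constant) exactly the quantity $\phi(\rho)$ we are trying to bound, so Gronwall applies. The remaining terms are routine since $A'/A\in L^\infty$.
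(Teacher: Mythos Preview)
Your argument is correct and follows essentially the same route as the paper: both test the continuity equation against $\phi'(\rho)$, extract the good term $2\varepsilon\int\rho^{-3}|\rho_x|^2\chi_{\{\rho<\tilde\rho\}}\,dx$, and close via Young's inequality and Gronwall. The only cosmetic differences are that the paper first rewrites the advective piece as $(u\phi)_x-2(\tfrac{1}{\rho}-\tfrac{1}{\tilde\rho})u_x\chi$ and then integrates the $u_x$ term by parts (arriving at the same $\int 2u\rho_x\rho^{-2}\chi\,dx$ you obtain directly), and that the paper takes $\tilde\rho<\inf_{(a,b)}\rho_0$ rather than just $\tilde\rho<\min(\rho_-^\varepsilon,\rho_+^\varepsilon)$. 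Your additional step---bounding $\int_{Q_T}\rho^{-3}|\rho_x|^2\chi_{\{\rho\ge\tilde\rho\}}\,dx\,dt$ via Lemma~\ref{energy4}---is in fact a small completion of the paper's proof, since the paper only explicitly controls the integral on $\{\rho<\tilde\rho\}$ while the lemma statement asserts the bound without the cutoff.
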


\begin{proof}
 \, Multiplying the first equation of \eqref{approx} by $\phi'(\rho)$, we have
 \begin{align*}
 &\phi_t+(u\phi)_x-\varepsilon\phi_{xx}+2\varepsilon\frac{\rho_x^2}{\rho^3}\chi_{\{\rho<\tilde{\rho}\}}\\
  &=2\big(\frac{1}{\rho}-\frac{1}{\tilde{\rho}}\big)u_x\chi_{\{\rho<\tilde{\rho}\}}+\frac{A'}{A}\rho u\big(\frac{1}{\rho^2}-\frac{1}{\tilde{\rho}^2}\big)\chi_{\{\rho<\tilde{\rho}\}}+\varepsilon\frac{A'}{A}\rho_x\big(\frac{1}{\tilde{\rho}^2}-\frac{1}{\rho^2}\big)\chi_{\{\rho<\tilde{\rho}\}}.
 \end{align*}
 Integrating in $(t,x)$ and using the boundary conditions with $\tilde{\rho}<\inf_{(a,b)}\rho_0(x)$ chosen,
 we obtain
 \begin{align*}
  \int_a^b&\phi(\rho)\,dx+\varepsilon\int_{Q_T\cap\{\rho<\tilde{\rho}\}}\frac{|\rho_x|^2}{\rho^3}\,dx\,dt\\
  \leq&\, \Big|\int_{Q_T\cap\{\rho<\tilde{\rho}\}}2\big(\frac{1}{\rho}-\frac{1}{\tilde{\rho}}\big)u_x\,dx\,dt\Big|
    +\Big|\int_{Q_T\cap\{\rho<\tilde{\rho}\}}\frac{A'}{A}\rho u\big(\frac{1}{\rho^2}-\frac{1}{\tilde{\rho}^2}\big)\,dx\,dt\Big|\\
  &+\Big|\int_{Q_T\cap\{\rho<\tilde{\rho}\}}\varepsilon\frac{A'}{A}\rho_x\big(\frac{1}{\tilde{\rho}^2}-\frac{1}{\rho^2}\big)\,dx\,dt\Big|\\
  \leq&\, \De\int_{Q_T\cap\{\rho<\tilde{\rho}\}}\frac{|\rho_x|^2}{\rho^3}\,dx\,dt+C_\De\Big(1+\int_{Q_T}\phi(\rho)\,dx\,dt\Big)
 \end{align*}
 and conclude the estimate by the Gronwall inequality, where we have integrated by parts
 and applied the Young inequality in the first term after the first inequality above.
\qed\end{proof}

For $x\in(a,b)$, we take $x_0$ to be the closest point to $x$
such that $\rho(t,x_0)=\tilde{\rho}$, where $\tilde{\rho}=\min_{x\in (a,b)}\rho_0(x)$;
otherwise,
we already have an upper bound on $\rho^{-1}$.
By writing
$$
\big|\rho(t,x)^{-1}\big|
\leq\Big|\int_{x_0}^x\frac{\rho_y(t,y)}{\rho^2(t,y)}\,dy\Big|+\big|(\tilde{\rho})^{-1}\big|,
$$
we obtain
\begin{align}\label{3.2}
 \int_0^T\big\|\rho(t,\cdot)^{-1}\big\|_{L^\infty(a,b)}\,dt
 \leq&\, C+C\Big(\int_{Q_T}\frac{|\rho_x|^2}{\rho^3}\,dx\,dt\Big)^{\frac{1}{2}}\Big(\big(\int_{Q_T}\phi(\rho)\,dx\,dt\big)^{\frac{1}{2}}+1\Big)\nonumber\\
 \leq&\, C\bigg(1+\Big(\int_{Q_T}\frac{|\rho_x|^2}{\rho^3}\, dxdt\Big)^{\frac{1}{2}}\bigg).
\end{align}

Finally, we use the previous lemmas to show the lower bound on $\rho$.

\begin{lemma}\label{energy7}
There exists $C>0$, depending on $\eps, T$, and the other parameters of the problem, such that
\begin{eqnarray*}
 C^{-1}\leq\rho(t,x)\leq C,\qquad\,
 \int_0^T\big\|\big(\frac{m_x}{\rho},\frac{\rho_x}{\rho},u_x\big)(t,\cdot)\big\|_{L^\infty(a,b)}\,dt\leq C.
\end{eqnarray*}
\end{lemma}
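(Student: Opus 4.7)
The plan is to close the estimates by extracting a pointwise lower bound on $\rho$ via a parabolic minimum principle applied to $\log\rho$, with a bootstrap that avoids the apparent circularity between this lower bound and the $L^1_t$ integrability of $\|u_x\|_{L^\infty}$. As a preliminary sharpening, I revisit the derivation of \eqref{3.2}: the Cauchy--Schwarz step, together with the uniform bound $\int_a^b \rho^{-1}\,dx\le C$ coming from Lemma \ref{energy6}, already gives the pointwise-in-time estimate
\beqs
\|\rho^{-1}(t,\cdot)\|_{L^\infty(a,b)}^{\,2}\le C\Big(1+\int_a^b\frac{|\rho_x|^2}{\rho^3}\,dx\Big),
\eeqs
so integrating in $t$ and applying Lemma \ref{energy6} once more upgrades the $L^1_t$ bound \eqref{3.2} to $\int_0^T\|\rho^{-1}(t,\cdot)\|_{L^\infty(a,b)}^{\,2}\,dt\le C$.

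Dividing the first equation of \eqref{approx} by $\rho$ and writing $L:=\log\rho$ yields the parabolic equation
\beqs
L_t+uL_x-\varepsilon L_{xx}=\varepsilon L_x^2+\varepsilon\tfrac{A'(x)}{A(x)}L_x-u_x-\tfrac{A'(x)}{A(x)}u.
\eeqs
Since $\varepsilon L_x^2\ge 0$ and the Dirichlet values $\log\rho_\pm^\varepsilon$ are fixed finite constants, the right-hand side is bounded below by $-\|u_x(t,\cdot)\|_{L^\infty(a,b)}-C$, and the weak parabolic minimum principle gives
\beqs
\log\min_{x\in[a,b]}\rho(t,\cdot)\ge \log\min\{\inf_{[a,b]}\rho_0^\varepsilon,\rho_-^\varepsilon,\rho_+^\varepsilon\}-\int_0^t\|u_x(s,\cdot)\|_{L^\infty(a,b)}\,ds-Ct.
\eeqs

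The bootstrap step is to estimate $\int_0^T\|u_x\|_{L^\infty}\,dt$ on the time interval on which $\rho$ stays strictly positive. From $u_x=m_x/\rho-u\,\rho_x/\rho$ and the bound $\|u\|_{L^\infty}\le C$ of Lemma \ref{lemma:max-principle},
\beqs
\|u_x(t)\|_{L^\infty}\le C\|\rho^{-1}(t)\|_{L^\infty}\big(\|m_x(t)\|_{L^\infty}+\|\rho_x(t)\|_{L^\infty}\big).
\eeqs
The one-dimensional Sobolev embedding together with Lemma \ref{energy4} then give $\|m_x(t)\|_{L^\infty}+\|\rho_x(t)\|_{L^\infty}\le C\big(1+\|m_{xx}(t)\|_{L^2}+\|\rho_{xx}(t)\|_{L^2}\big)$, with the right-hand side in $L^2(0,T)$. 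Pairing via Cauchy--Schwarz against the sharpened $L^2_t$ control of $\|\rho^{-1}\|_{L^\infty}$ from the first step yields $\int_0^T\|u_x(t,\cdot)\|_{L^\infty}\,dt\le C$, and hence $\rho(t,x)\ge C^{-1}>0$ on $Q_T$ by a standard continuation argument (the set of times on which the lower bound holds is nonempty, open, and closed). The remaining $L^1_tL^\infty_x$ estimates on $\rho_x/\rho$, $m_x/\rho$, and $u_x$ then follow immediately from the pointwise lower bound on $\rho$, the identity $u_x=m_x/\rho-u\,\rho_x/\rho$, and the Sobolev bounds of Lemma \ref{energy4}.

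The main obstacle is the apparent circularity: a pointwise lower bound on $\rho$ seems to require $L^1_t$ integrability of $\|u_x\|_{L^\infty}$, which in turn needs a lower bound on $\rho$. This is broken by the observation that the Cauchy--Schwarz argument underlying \eqref{3.2} actually delivers $L^2_t$ integrability of $\|\rho^{-1}\|_{L^\infty}$, which is exactly the strength needed to pair, via Cauchy--Schwarz, with the $L^2_t$ Sobolev control of $\|m_{xx}\|_{L^2}$ and $\|\rho_{xx}\|_{L^2}$ from Lemma \ref{energy4}.
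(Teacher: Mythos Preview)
Your proof is correct and follows essentially the same route as the paper: both arguments pair the pointwise-in-time Cauchy--Schwarz estimate underlying \eqref{3.2} (which, as you observe, actually gives $L^2_t$ control of $\|\rho^{-1}\|_{L^\infty}$) with the $L^2_tH^2_x$ bounds of Lemma~\ref{energy4} to obtain $\int_0^T\|u_x\|_{L^\infty}\,dt\le C$, and then feed this into a parabolic comparison principle to get the lower bound on $\rho$. The only cosmetic difference is that the paper applies the maximum principle to $v=1/\rho$ (yielding a multiplicative Gronwall factor $e^{\int\|u_x\|_{L^\infty}}$) while you apply the minimum principle to $\log\rho$ (yielding the equivalent additive form); the continuation argument you invoke is unnecessary in the a~priori setting but harmless.
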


\begin{proof}
\, By the Sobolev embedding and \eqref{3.2},
 \begin{align*}
  \int_0^T&\big\|\frac{m_x(t,\cdot)}{\rho(t,\cdot)}\big\|_{L^\infty(a,b)}dt
   \leq\,\int_0^T\|m_x(t,\cdot)\|_{L^\infty(a,b)}\|\rho^{-1}(t,\cdot)\|_{L^\infty(a,b)}\,dt\\
  \leq&\, C\int_0^T\Big(\int_a^b|m_{xx}|^2\,dx\Big)^{\frac{1}{2}}\bigg(1+\Big(\int_a^b\frac{|\rho_x|^2}{\rho^3}\,dx\Big)^{\frac{1}{2}}\Big(\int_a^b\phi(\rho)\,dx\Big)^{\frac{1}{2}}\bigg)\,dt\\
  \leq&\, C.
 \end{align*}
 The estimate for $\frac{\rho_x}{\rho}$ is obtained in the same way.

 For $u_x$, note first that $u_x=\frac{m_x}{\rho}-\frac{u\rho_x}{\rho}$ and argue similarly to the above.
 Finally, let $v=\frac{1}{\rho}$. Then, by \eqref{approx}, we calculate
 \begin{align*}
 v_t+\big(u-\varepsilon\frac{A'}{A}\big)v_x-\varepsilon v_{xx}
  = \frac{\rho u_x}{\rho^2}-\frac{2\varepsilon\rho_x^2}{\rho^3}+\frac{A'}{A}\frac{u}{\rho}
  \leq\big(u_x+\frac{A'}{A}u\big)v.
 \end{align*}
By the maximum principle,
$$
\max_{Q_T}v\leq C\max\big\{\|v_0\|_{L^\infty}, \rho_-^{-1}, \rho_+^{-1}\big\}e^{\int_0^T\|(u_x,u)(\tau,\cdot)\|_{L^\infty(a,b)}d\tau}
\leq C \|v_0\|_{L^\infty(a,b)}.
$$
\qed\end{proof}
This completes the proof of Theorem \ref{artsol}.

\section{\, Uniform Estimates}\label{sec:uniform}

As mentioned earlier, the purpose of this section and \S 5--\S 6 is
 to establish the proof of Theorem 2.1 by making the uniform
estimates of the approximate solutions, independent of $\varepsilon$.
Throughout this section, the universal constant $M>0$ is independent of $\varepsilon$,
and we assume that the area function $A(x)\in C^2$ satisfies \eqref{1.4}--\eqref{1.5}.

The first estimate concerns the local higher integrability properties of the density.

\begin{lemma}\label{gamma+1}
Let $K\subset \mathbb{R}$ be a compact set such that $K\subset(a,b)$.
Then, for $T>0$, there exists a constant
$M=M(K,T)$, independent of $\varepsilon$,
such that
\begin{equation}\label{4.1}
 \int_0^T\int_K\big(\rho^{\gamma+1}+\delta\rho^3\big)\,dx\,dt\leq M.
\end{equation}
\end{lemma}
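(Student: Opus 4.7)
The plan is to test the momentum equation in \eqref{approx} against a potential-type function whose spatial derivative carries a factor of $\rho$, so that integration by parts against the flux $(\rho u^2 + p_\delta)_x$ produces $\rho \cdot p_\delta(\rho) = \kappa\rho^{\gamma+1} + \delta\rho^3$, matching precisely the integrand of \eqref{4.1}. Fix a cutoff $\varphi \in C_c^\infty(\mathbb{R})$ with $0 \le \varphi \le 1$, $\varphi \equiv 1$ on $K$, and $\mathrm{supp}\,\varphi \subset (a(\varepsilon), b(\varepsilon))$ for every $\varepsilon > 0$ sufficiently small (possible since $a(\varepsilon) \to -\infty$ and $b(\varepsilon) \to +\infty$), and define
$$W(t,x) := \int_a^x \varphi(y)\,\rho(t,y)\,A(y)\,dy.$$
Then $W_x = \varphi\,\rho\,A$, $W(t,a) = 0$, and $\|W\|_{L^\infty([0,T]\times(a,b))} \le M$ uniformly in $\varepsilon$ by Proposition \ref{energy} (using $\overline{h_\delta}(\rho,\bar\rho) \gtrsim \rho^\gamma$ for large $\rho$ together with H\"{o}lder's inequality on $\mathrm{supp}\,\varphi$).

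Multiplying the momentum equation by $W$ and integrating over $Q_T = [0,T]\times (a,b)$, integration by parts in $x$ in the flux term produces on the left the quantity $\int_0^T\int_a^b \varphi\,\rho\,(\rho u^2 + p_\delta)\,A\,dx\,dt$, which, since $A \ge A_0 > 0$ and $\varphi \equiv 1$ on $K$, dominates $A_0 \int_0^T\int_K (\kappa\rho^{\gamma+1} + \delta\rho^3)\,dx\,dt$---the quantity to be bounded. The right-hand side consists of: (i) the time-boundary pairing $[\int W m\,dx]_0^T$; (ii) the drift term $\int\!\int W_t\,m\,dx\,dt$; (iii) the geometric term $\int\!\int W\,(A'/A)\,\rho u^2\,dx\,dt$; (iv) the spatial boundary contribution at $x = b$ (that at $x = a$ vanishes since $W(t,a) = 0$); and (v) the viscosity term $\varepsilon \int\!\int W\,(m_x + (A'/A)m)_x\,dx\,dt$.

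The non-viscous terms (i)--(iv) are controlled uniformly in $\varepsilon$ by combining $\|W\|_\infty \le M$ with Proposition \ref{energy}: the time-boundary pairing via Cauchy--Schwarz (using $|m| \le \rho + \rho u^2$); the drift term by computing $W_t$ from the continuity equation $(\rho A)_t + (mA)_x = \varepsilon(A\rho_x)_x$ and applying Proposition \ref{energy} after one further integration by parts in $y$; the geometric term by boundedness of $A'/A$ and the energy estimate; and the spatial boundary contribution by the prescribed Dirichlet data at $x = b$.

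The main obstacle is the viscosity contribution (v), which after integration by parts in $x$ reduces (modulo controlled boundary terms) to
$$-\varepsilon\int_0^T\!\int_a^b \varphi\,\rho\,A\,m_x\,dx\,dt - \varepsilon\int_0^T\!\int_a^b \varphi\,\rho\,A'\,m\,dx\,dt.$$
Writing $m_x = \rho u_x + u \rho_x$ and applying Cauchy--Schwarz against the dissipation bounds $\varepsilon\int\!\int \rho u_x^2 A \le M$ and $\varepsilon\int\!\int h_\delta''(\rho)\rho_x^2 A \le M$ of Proposition \ref{energy}, with $h_\delta''(\rho) = \kappa\gamma\rho^{\gamma-2} + 2\delta$, one is led to $\varepsilon$-weighted moments of $\rho^3$ and $\rho^2 u^2$. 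Choosing $\delta = \delta(\varepsilon)$ with $\delta \ge c\varepsilon$ allows these to be absorbed into $\delta\int\!\int \varphi\rho^3$ on the left via Young's inequality, together with a small fraction of the main term $\int\!\int \varphi\rho^{\gamma+1} A$. The $\delta\rho^2$ regularization in $p_\delta$---supplying both the floor $h_\delta''(\rho) \ge 2\delta$ (extra dissipation) and the $\delta\rho^3$ on the left (room for absorption)---is precisely the mechanism that lets the argument close uniformly for the full range $\gamma \in (1,\infty)$, and in particular for the previously inaccessible regime $\gamma > 3$.
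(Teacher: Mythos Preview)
Your overall strategy---testing the momentum equation against an antiderivative of $\varphi\rho A$ so that integration by parts against the flux produces $\rho\,p_\delta(\rho)$---is exactly the standard Chen--Perepelitsa device the paper invokes. Two points, however, prevent the argument as written from closing.

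\textbf{Localization.} Your $W(t,x)=\int_a^x\varphi\rho A\,dy$ is \emph{not} compactly supported: it is a (generally nonzero) constant for $x$ to the right of $\mathrm{supp}\,\varphi$. Consequently, when you multiply the momentum equation by $W$ and integrate over all of $(a,b)$, the time-boundary pairing contains $W(t,d)\int_d^{b} m\,dx$, and this is \emph{not} uniformly bounded: since $\rho_+>0$ in general and $b(\varepsilon)\to\infty$, one only has $|m|\le \tfrac12\rho u^2+\tfrac12\rho$ with $\int_d^b\rho\,dx\to\infty$. The same tail problem recurs in the drift term $\int\!\!\int mW_t$, and the viscous boundary term $\varepsilon W(m_x+\tfrac{A'}{A}m)\big|_{x=b}$ involves $m_x(t,b)$, which the Dirichlet data do not prescribe. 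The fix is routine: test against $\varphi W$ (equivalently, insert a second copy of the cutoff), which makes every integrand compactly supported and kills all boundary contributions; the extra commutator $\varphi'W$ lands on $\mathrm{supp}\,\varphi'$ and is harmless.

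\textbf{The constraint $\delta\ge c\varepsilon$.} Your absorption of the viscous remainder into $\delta\int\varphi\rho^3$ relies on $\delta\gtrsim\varepsilon$. But the paper's parameter regime (conditions (iii)--(iv) after Lemma~\ref{unif2}) forces $\delta\le M\varepsilon/|a|^\beta$ with $|a|\to\infty$, so $\delta/\varepsilon\to 0$; the two are incompatible. For $\gamma\ge 2$ this is moot, since $\rho^3\le \rho^{\gamma+1}+1$ lets you absorb into the $\kappa\rho^{\gamma+1}$ term directly without invoking $\delta$; but for $\gamma\in(1,2)$ your absorption step does not close as stated. You should revisit the handling of $\varepsilon\int\varphi A\rho^2 u_x$ so that it does not impose a lower bound on $\delta/\varepsilon$.
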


\begin{proof}
 \, The proof of this lemma is standard by now. We simply observe that, on the compact set $K$,
$A$, $\frac{A'}{A}$, and $\frac{A''}{A}$
are bounded uniformly by a constant $M=M(K)$.
Therefore, the argument as in \cite[Lemma 3.3]{ChenPerep2}
(compare also \cite[Lemma 3.3]{ChenPerep1}) yields \eqref{4.1}.
\qed\end{proof}

The other key estimate
concerns a local higher integrability property for the velocity:
$$
\int_0^T\int_K\big(\rho|u|^3+\rho^{\ga+\th}\big)\,dx\,dt\leq M.
$$
The proof of this estimate relies on the observation that a
careful choice of the generating function $\psi=\psi(s)$
yields an entropy flux function of strictly higher growth rate
than its associated weak entropy function.
This observation was first made in \cite{LPT} and
is contained in the next result, which has been
taken and adapted from Chen-Perepelitsa \cite{ChenPerep1,ChenPerep2}.

\begin{lemma}\label{entropy2}
Let $(\eta^\#,q^\#)$
be the entropy pair corresponding to $\psi_\#=\psi_\#(w)=\frac{1}{2}w|w|$.
Then, for $\psi(s)=\psi_\#(s-u_-)$, the associated entropy pair $(\check{\eta},\check{q})$ can be written, for some constant $\alpha$, as
 \begin{align*}
  &\check{\eta}(\rho,m)=\eta^\#(\rho,m-\rho u_-),\\
  &\check{q}(\rho,m)=q^\#(\rho,m-\rho u_-)+u_-\eta^\#(\rho,m-\rho u_-),\\
  &\check{\eta}(\rho,m)=\alpha\rho^{\theta+1}(u-u_-)+r_2(\rho,\rho(u-u_-)),\\
  &|r_2(\rho,\rho(u-u_-))|\leq M\rho|u-u_-|^2.
 \end{align*}
Moreover, for the notations{\rm :}
 \begin{align*}
  &\tilde{\eta}(\rho,m):=\check{\eta}(\rho,m)-\nabla\check{\eta}(\rho_-,m_-)\cdot(\rho-\rho_-,m-m_-),\\
  &\tilde{q}(\rho,m):=\check{q}(\rho,m)-\nabla\check{\eta}(\rho_-,m_-)\cdot(m,\frac{m^2}{\rho}+p),
 \end{align*}
the following inequalities and identities hold{\rm :}
 \begin{align*}
  &|\tilde{\eta}(\rho,m)|\leq M\big(\rho|u-u_-|^2+\rho(\rho^\theta-(\rho_-)^\theta)^2\big),\\[1mm]
  &\tilde{q}(\rho,m)\geq \frac{1}{M}\big(\rho|u-u_-|^3+\rho^{\gamma+\theta}\big)-M\big(\rho+\rho|u-u_-|^2+\rho^\gamma\big),\\[1mm]
  &\tilde{q}(\rho_-,m_-)<0,\\[1mm]
  &\big|-\check{q}+m\check{\eta}_\rho+\frac{m^2}{\rho}\check{\eta}_m\big|\leq M\tilde{q}+M,\\[1mm]
  &\tilde{\eta}_m=\alpha(\rho^\theta-(\rho_-)^\theta)+(u-u_-)r(\rho,u),\hspace{5mm} |r(\rho,u)|\leq M,\\[1mm]
  &|m\tilde{\eta}_m|\leq M\big(\rho(u-u_-)^2+\rho(\rho^\theta-(\rho_-)^\theta)^2+\rho\big),\\[1mm]
  &|(\tilde{\eta}_m)_\rho|\leq M\rho^{\theta-1},\hspace{5mm} |(\tilde{\eta}_m)_u|\leq M,\\[1mm]
  &|\tilde{\eta}_m|\leq M\big(|u-u_-|+|\rho^\theta-(\rho_-)^\theta|\big).
 \end{align*}
\end{lemma}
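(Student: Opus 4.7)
The plan has three stages. First, I would use Galilean covariance of the Lions--Perthame--Tadmor kernel formulas \eqref{2.3a}--\eqref{2.3b} to obtain the identifications $\check{\eta}(\rho,m)=\eta^\#(\rho,m-\rho u_-)$ and $\check q(\rho,m)=q^\#(\rho,m-\rho u_-)+u_-\eta^\#(\rho,m-\rho u_-)$. The first identity is immediate from the substitution $\psi(s)=\psi_\#(s-u_-)$ in \eqref{2.3a}: after replacing $m/\rho$ by $m/\rho-u_-$, the integrand becomes that of $\eta^\#$ evaluated at $(\rho,m-\rho u_-)$. For the second identity, the prefactor $m/\rho+\theta\rho^\theta s$ in \eqref{2.3b} differs from the corresponding shifted prefactor $(m-\rho u_-)/\rho+\theta\rho^\theta s$ by the additive constant $u_-$, which pulls out exactly the $u_-\eta^\#$ term.

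Next I would derive the asymptotic form $\check{\eta}=\alpha \rho^{1+\theta}(u-u_-)+r_2$ by Taylor expansion of $\psi_\#(v+\rho^\theta s)$ in the argument $v=u-u_-$. Using $\psi_\#'(w)=|w|$ and $|\psi_\#''|\le 1$, write
\begin{equation*}
\psi_\#(v+\rho^\theta s)=\psi_\#(\rho^\theta s)+v\,|\rho^\theta s|+R(v,\rho^\theta s),\qquad |R|\le M v^2.
\end{equation*}
The zeroth-order contribution to $\check\eta$ vanishes because $s\mapsto s|s|[1-s^2]_+^{(3-\gamma)/(2(\gamma-1))}$ is odd. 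The linear-in-$v$ contribution equals $\rho^{1+\theta}(u-u_-)\int |s|[1-s^2]_+^{(3-\gamma)/(2(\gamma-1))}\,ds$, which identifies $\alpha$, while the remainder $r_2$ inherits the $M\rho|u-u_-|^2$ bound directly from $R$.

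The estimates on $\tilde\eta$, $\tilde\eta_m$ and their derivatives then follow by routine differentiation of the LPT kernel representation and subtraction of the linearization at $(\rho_-,m_-)$. The $|\tilde\eta|$ bound separates the dependence on $\rho-\rho_-$ and $u-u_-$ using the product structure $\rho^{1+\theta}(u-u_-)=\rho\cdot\rho^\theta(u-u_-)$ together with $|\rho^\theta-\rho_-^\theta|$-type identities; the derivative bounds on $\tilde\eta_m$ come from differentiating under the integral in \eqref{2.3a} and using $\psi_\#'(v+\rho^\theta s)=|v+\rho^\theta s|$ combined with the Taylor expansion above.

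The main obstacle, and what makes this lemma valuable, is the strict lower bound on $\tilde q$. The key observation, going back to \cite{LPT}, is that the extra factor $\theta\rho^\theta s$ in the kernel for $q^\psi$ boosts the growth rate so that, after Galilean shift, $\check q$ contains positive leading pieces $\rho^{\gamma+\theta}$ (from integrating $\rho^\theta s \cdot \psi_\#(\rho^\theta s)=\tfrac{1}{2}\rho^{2\theta}s^2|s|\rho^\theta$) and $\rho|u-u_-|^3$ (from the regime where $|u-u_-|$ dominates $\rho^\theta$). The delicate step is to split the $s$-integral into the ranges where $|v|\lessgtr\rho^\theta|s|$, obtain uniform lower bounds in each regime, and verify that the subtraction defining $\tilde q$ (removal of the $(\rho_-,m_-)$-linearization, plus the auxiliary $(m,m^2/\rho+p)$ evaluation appearing in the dissipation inequality) only produces errors controlled by $M(\rho+\rho|u-u_-|^2+\rho^\gamma)$, which can be absorbed. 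The companion inequality $|-\check q+m\check\eta_\rho+(m^2/\rho)\check\eta_m|\le M\tilde q+M$ is then a consequence of the same splitting, since the left-hand side has the same growth profile as $\tilde q$ up to terms of the lower-order type already accounted for.
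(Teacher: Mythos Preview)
The paper does not actually prove this lemma; it is quoted from \cite{ChenPerep1,ChenPerep2} (with the underlying observation attributed to \cite{LPT}), and the text moves directly to Lemma~\ref{rhocubed}. Your sketch follows precisely the standard argument from those references: the Galilean shift in the kernel formulas \eqref{2.3a}--\eqref{2.3b} for the first two identities, the first-order Taylor expansion of $\psi_\#(v+\rho^\theta s)$ (exploiting oddness for the zeroth term and $|\psi_\#''|\le 1$ for the remainder) to isolate $\alpha\rho^{1+\theta}(u-u_-)$, and the $|v|\lessgtr \rho^\theta|s|$ splitting combined with the extra $\theta\rho^\theta s$ factor in the flux kernel for the crucial lower bound on $\tilde q$.

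One small omission: you do not mention the sign $\tilde q(\rho_-,m_-)<0$, which is used in an essential way in the proof of Lemma~\ref{unif2} to discard the boundary term at $x=a$. This is a direct computation---at $(\rho_-,m_-)$ one has $u-u_-=0$, so $\check\eta$ and its $m$-derivative reduce to odd-integrand expressions, while the surviving contribution to $\tilde q$ comes from the subtracted term $-\check\eta_\rho(\rho_-,m_-)m_-$ together with $-\check\eta_m(\rho_-,m_-)p(\rho_-)$; working this out one finds a strictly negative value when $\rho_->0$. It would be worth stating this check explicitly.
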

Before proving the higher integrability of the velocity, we need the following technical lemma.

\begin{lemma}\label{rhocubed}
For any compact set $K\subset (a,b)$,   there exists $\beta>2$ such that
\begin{equation}\label{cubed}
\varepsilon\int_0^T\int_a^x \rho^3(t,y) A(y)\,dy\,dt\leq M|a|^\beta \qquad\, \mbox{for any $x\in K$},
\end{equation}
where $M=M(K)$ is independent of $\varepsilon>0$.
\end{lemma}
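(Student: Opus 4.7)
The plan is to bound the integral by something polynomial in both $|a|$ and $\varepsilon^{-1}$, and then invoke the constraint \eqref{3.6}, which gives $\varepsilon^{-1}\leq M|a|$, to absorb all $\varepsilon^{-1}$ factors into powers of $|a|$.

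First I would use the crude pointwise-in-$t$ bound
$$\int_a^x \rho^3(t,y)A(y)\,dy \leq A_1(x-a)\,\|\rho(t,\cdot)\|_{L^\infty(a,b)}^3 \leq M|a|\,\|\rho(t,\cdot)\|_{L^\infty(a,b)}^3,$$
valid for $x\in K$. Integrating in $t$ and applying H\"older's inequality with exponent $\sigma=3/(2\max(2,\gamma))\in(0,1)$ reduces the problem to controlling $\int_0^T\|\rho(t,\cdot)\|_{L^\infty(a,b)}^{2\max(2,\gamma)}\,dt$, which is exactly the quantity bounded by $C=C(\varepsilon,T,E_0)$ in Lemma \ref{energy2}. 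The main technical point is then to extract the polynomial $\varepsilon^{-1}$-dependence hidden inside $C$: inspecting the proof of Lemma \ref{energy2} (modelled on \cite[Lem.~2.1]{ChenPerep2}), one combines the fundamental theorem of calculus on $(a,b)$ with the dissipation estimate $\varepsilon\int_{Q_T}h_\delta''(\rho)\rho_x^2 A\,dx\,dt\leq M$ from Proposition \ref{energy} to obtain $C(\varepsilon,T,E_0)\leq M(1+|b-a|/\varepsilon)^{r}$ for some finite $r$, which by \eqref{3.6} is bounded by $M|a|^{r'}$.

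Combining these two ingredients produces $\int_0^T\int_a^x\rho^3 A\,dy\,dt\leq M|a|^{1+r'\sigma}$. Multiplying by $\varepsilon$ and invoking $\varepsilon|a|\leq M$ once more absorbs one power of $|a|$, yielding $\varepsilon\int_0^T\int_a^x\rho^3 A\,dy\,dt\leq M|a|^{r'\sigma}$; choosing $\beta=\max(r'\sigma,3)$ gives the required bound with $\beta>2$. The main obstacle is simply the bookkeeping of the polynomial degree through Lemma \ref{energy2}; since the statement only asserts the existence of \emph{some} $\beta>2$ and subsequent arguments will only use the polynomial character, no careful optimisation of $r'$ is needed.
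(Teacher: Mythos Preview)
Your argument contains a genuine gap in the very first step: you assert that constraint \eqref{3.6} gives $\varepsilon^{-1}\leq M|a|$, but the inequality actually goes the other way. Since $a<0<b$, we have $|a|\leq |a-b|$, and \eqref{3.6} reads $\varepsilon|a-b|\leq M$, hence $\varepsilon|a|\leq M$, i.e.\ $|a|\leq M\varepsilon^{-1}$. This lets you trade powers of $|a|$ for powers of $\varepsilon^{-1}$, but \emph{not} the reverse; so you cannot absorb the $\varepsilon^{-1}$ dependence coming out of Lemma~\ref{energy2} into powers of $|a|$. Your scheme then collapses precisely in the range $\gamma\in(1,3)$, where the extra integrability is genuinely needed: with your crude bound $\int_a^x\rho^3A\,dy\le M|a|\|\rho\|_{L^\infty}^3$ and the (optimistic) estimate $\int_0^T\|\rho\|_{L^\infty}^{2\max(2,\gamma)}\,dt\leq M|b-a|\,\varepsilon^{-1}$ one ends up with a factor $\varepsilon^{1-3/\gamma}$, which blows up as $\varepsilon\to0$ for $\gamma<3$. (You later also invoke $\varepsilon|a|\leq M$ to ``absorb one power of $|a|$''; that direction is fine, but it is incompatible with your earlier claim unless $\varepsilon|a|\sim 1$, an extra hypothesis nowhere assumed.)

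The paper's proof avoids this trap by a buckling argument rather than a crude $L^\infty$ bound. One writes $\rho^3=\rho^{3-\gamma}\cdot\rho^\gamma$, controls $\int_a^x\rho^\gamma A\,dy\leq M|a|$ directly from Proposition~\ref{energy}, bounds $\sup\rho^{3-\gamma}$ via the fundamental theorem of calculus, and then uses Young's inequality with a carefully weighted splitting so that a term $\frac{1}{2}\varepsilon\int\int\rho^3 A$ reappears on the right with a small coefficient and can be absorbed into the left. The only $\varepsilon^{-1}$ that survives comes from the dissipation bound $\int_{Q_T}\rho^{\gamma-2}|\rho_x|^2A\,dx\,dt\leq M\varepsilon^{-1}$ of Proposition~\ref{energy}, and it cancels exactly against the $\varepsilon$ already sitting on the left-hand side. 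Thus no conversion of $\varepsilon^{-1}$ into $|a|$ is ever required, and \eqref{3.6} is used only in its correct direction $\varepsilon|a|\leq M$.
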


\begin{proof}
\, The proof is divided into three cases.

\smallskip
1.  When $\gamma\in(1,2)$,
 \begin{align*}
 &\varepsilon\int_0^T\int_a^x\rho^3 A\,dy\,dt\\
 & \leq M\varepsilon |a|\int_0^T\sup_{(a,x)}\rho^{3-\gamma}\,dt\\
 & \leq M\varepsilon |a|\Big(1+\int_0^T\int_a^x\rho^{3-\frac{3\gamma}{2}}|(\rho^{\frac{\gamma}{2}})_y|\,dy\,dt\Big)\\
 & \leq M\varepsilon |a|\Big(1+\int_0^T\int_a^x\rho^{6-3\gamma}A^{-1}\,dy\,dt
   + \int_0^T\int_a^x\rho^{\gamma-2}|\rho_y|^2A\,dy\,dt\Big)\\
 &  \leq  M\varepsilon |a| \Big(1+\int_0^T\int_a^x\rho^{6-3\gamma}A^{2-\gamma}A^{\gamma-3}\,dy\,dt\Big)\\
 & \leq M\varepsilon |a| \Big(1+\int_0^T\int_a^x\rho^3\Big(\frac{A}{2|a|M}+M|a|^{\beta_1(\gamma)}A^{\frac{\gamma-3}{\gamma-1}}\Big)\,dy\,dt\Big),
 \end{align*}
 where we have used the Young inequality.
 Then
 $$
 \frac{\varepsilon}{2}\int_0^T\int_a^x\rho^3\,dy\,dt\leq M\varepsilon |a|^{\beta(\gamma)},
 $$
 where $\beta(\gamma)=\beta_1(\gamma)+1$.

\smallskip
2. Now we consider the case: $\gamma\in[2,3]$.
Note that  $\int_a^x\rho A\,dy\leq M(E_0)+M |a|\leq M |a|$.
Then
\begin{align*}
  \int_0^T\int_a^x\rho^3A\,dy\,dt &\leq\int_0^T\sup_{(a,x)}\rho^2(t,\cdot)\big(\int_a^x\rho A\,dy\big)\,\,dt\\
  &\leq  M |a|\Big(1+\int_0^T\int_a^x\rho|\rho_y|\,dy\,dt\Big)\\
  &= M|a|\Big(1+\int_0^T\int_a^x\rho^{2-\frac{\gamma}{2}}\rho^{\frac{\gamma-2}{2}}|\rho_y|\,dy\,dt\Big)\\
  &\leq M|a|\Big(1+\frac{1}{\varepsilon}+\int_0^T\int_a^x\rho^{4-\gamma}A^{-1}\,dy\,dt\Big)\\
  &\leq M|a|\Big(1+\varepsilon^{-1}+\int_0^T\int_a^x\big(\rho^\gamma A+A^{-\frac{4}{2\gamma-4}}\big)\,dy\,dt\Big)\\
  &\leq M|a|\big(1+\varepsilon^{-1}+|a|\big),
\end{align*}
where we have written $A^{-1}=A^{\frac{4-\gamma}{\gamma}}A^{-\frac{4}{\gamma}}$ and applied the Young inequality
 with exponents $\frac{\gamma}{4-\gamma}$ and $\frac{\gamma}{2\gamma-4}$.

\smallskip
3. Finally, for $\gamma>3$, the estimate follows directly from the main energy estimate, Proposition \ref{energy}.
\qed\end{proof}

\begin{lemma}\label{hessianbound}
 Let $(\eta^\psi,q^\psi)$ be an entropy pair such that the generating function $\psi$ satisfies
 $$
 \sup_s|\psi''(s)|<\infty.
 $$
 Then, for any  $(\rho,m)\in\mathbb{R}^2$, $\xi\in\mathbb{R}^2$,
 $$
 |\xi^\top\nabla^2\eta^\psi(\rho, m)\xi|\leq M_\psi \xi^\top\nabla^2\eta^*(\rho, m)\xi,
 $$
 where $M_\psi$ depends only on $\psi$ and $\gamma$.
\end{lemma}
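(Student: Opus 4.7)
My plan is to establish the bound by computing the Hessian of $\eta^\psi$ in the conservative variables $(\rho,m)$ via its representation in the physical variables $(\rho,u)$, then reducing every entry to an integral involving only $\psi''$ against a bounded weight, and finally comparing termwise with the Hessian of $\eta^*$.

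Set $\lambda := \frac{3-\gamma}{2(\gamma-1)}$ and let $f(\rho,u) := \rho\int_{-\infty}^\infty\psi(u+\rho^\theta s)[1-s^2]_+^\lambda\,ds$, so that $\eta^\psi(\rho,m) = f(\rho, m/\rho)$. Applying the chain rule with $\partial u/\partial\rho\big|_m = -u/\rho$ and collecting terms in $\xi_1$ and $\eta_2 := \xi_2 - u\xi_1$, I expect the identity
\[
\xi^\top\nabla^2\eta^\psi(\rho,m)\xi = f_{\rho\rho}\,\xi_1^2 + \frac{2}{\rho}\bigl(f_{\rho u} - \rho^{-1}f_u\bigr)\xi_1\eta_2 + \frac{1}{\rho^2}f_{uu}\,\eta_2^2.
\]
A direct computation from \eqref{2.2a} gives $\xi^\top\nabla^2\eta^*(\rho,m)\xi = \kappa\gamma\rho^{\gamma-2}\xi_1^2 + \rho^{-1}\eta_2^2$, so the task reduces to controlling each of the three coefficients above by $\|\psi''\|_\infty$ times the corresponding coefficient of $\eta^*$.

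Differentiating under the integral sign yields
\[
f_{uu} = \rho\!\int\!\psi''(u+\rho^\theta s)[1-s^2]_+^\lambda\,ds, \qquad f_{\rho u} - \rho^{-1}f_u = \theta\rho^\theta\!\int\! s\,\psi''(u+\rho^\theta s)[1-s^2]_+^\lambda\,ds,
\]
both immediately bounded by $M\|\psi''\|_\infty\rho$ and $M\|\psi''\|_\infty\rho^\theta$ respectively, since $\lambda > -1$ makes $\int[1-s^2]_+^\lambda\,ds$ and $\int|s|[1-s^2]_+^\lambda\,ds$ finite. The remaining coefficient
\[
f_{\rho\rho} = \theta(\theta+1)\rho^{\theta-1}\!\int\! s\,\psi'(u+\rho^\theta s)[1-s^2]_+^\lambda\,ds + \theta^2\rho^{2\theta-1}\!\int\! s^2\,\psi''(u+\rho^\theta s)[1-s^2]_+^\lambda\,ds
\]
contains a factor of $\psi'$ not controlled by $\|\psi''\|_\infty$ alone; the crucial step is to integrate by parts in $s$ using $s\,[1-s^2]_+^\lambda = -\tfrac{1}{2(\lambda+1)}\tfrac{d}{ds}[1-s^2]_+^{\lambda+1}$ (with vanishing boundary terms since $\lambda+1>0$), which trades $\psi'$ for $\rho^\theta\psi''$ against the still-integrable weight $[1-s^2]_+^{\lambda+1}$ and produces the bound $|f_{\rho\rho}|\leq M\|\psi''\|_\infty\rho^{2\theta-1} = M\|\psi''\|_\infty\rho^{\gamma-2}$.

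With the three coefficient bounds in hand, the conclusion follows from Young's inequality applied to the cross term: since $\rho^{\theta-1}$ is the geometric mean of $\rho^{\gamma-2}$ and $\rho^{-1}$,
\[
\frac{2}{\rho}\bigl|f_{\rho u} - \rho^{-1}f_u\bigr|\,|\xi_1\eta_2| \leq M\|\psi''\|_\infty\rho^{\theta-1}|\xi_1|\,|\eta_2| \leq M\|\psi''\|_\infty\bigl(\rho^{\gamma-2}\xi_1^2 + \rho^{-1}\eta_2^2\bigr);
\]
combining with the diagonal bounds gives $|\xi^\top\nabla^2\eta^\psi\xi|\leq M\|\psi''\|_\infty\bigl(\rho^{\gamma-2}\xi_1^2+\rho^{-1}\eta_2^2\bigr)\leq M_\psi\,\xi^\top\nabla^2\eta^*\xi$ with $M_\psi$ depending only on $\|\psi''\|_\infty$ and $\gamma$. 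The main obstacle is precisely the $\psi'$ contribution in $f_{\rho\rho}$, which requires the specific integration-by-parts identity above; all other estimates are routine once this reduction is carried out.
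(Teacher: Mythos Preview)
Your argument is correct. The change of variables to $(\xi_1,\eta_2)$ with $\eta_2=\xi_2-u\xi_1$ diagonalizes the Hessian of $\eta^*$ exactly as you claim, your computations of $f_{uu}$, $f_{\rho u}-\rho^{-1}f_u$, and $f_{\rho\rho}$ are accurate, and the integration by parts via $s[1-s^2]_+^\lambda=-\tfrac{1}{2(\lambda+1)}\tfrac{d}{ds}[1-s^2]_+^{\lambda+1}$ is the right device to convert the $\psi'$ contribution into a $\psi''$ term with the correct power $\rho^{2\theta-1}=\rho^{\gamma-2}$; note that $\lambda+1=\tfrac{\gamma+1}{2(\gamma-1)}>0$ ensures the boundary terms vanish. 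The final application of Young's inequality to the cross term, using $\theta-1=\tfrac{1}{2}((\gamma-2)+(-1))$, is clean.

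The paper itself does not give a proof of this lemma: it simply writes ``The proof is direct; see \cite{ChenPerep1}.'' Your argument is therefore not a comparison against a written proof in the paper but rather a fully worked version of what the authors deferred to the reference. The computation you carry out is essentially the standard one for this estimate, so it is consistent with what one would expect the cited proof to contain.
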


The proof is direct; see \cite{ChenPerep1}.
Also recall that the mechanical energy $\eta^*(\rho,m)$ is convex.

The next lemma concerns the higher integrability property for the velocity
and is another crucial estimate
for applying the compensated compactness framework of \cite{ChenPerep1}.

\begin{lemma}\label{unif2}
Let $K\subset(a,b)$ be compact.
Then there exist a constant $M=M(K,T)$ and an exponent
$\beta>2$, both independent of $\varepsilon$, such that
$$
\int_0^T\int_K\big(\rho|u|^3+\rho^{\gamma+\theta}\big)\,dx\,dt
\leq M\Big(1+\frac{|a|^\beta\delta}{\varepsilon}+\varepsilon|a|\Big).
$$
\end{lemma}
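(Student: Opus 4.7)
My plan is to apply the entropy pair $(\tilde\eta,\tilde q)$ constructed in Lemma \ref{entropy2} (generated by $\psi(s)=\tfrac12(s-u_-)|s-u_-|$) to the approximate solutions and extract the higher integrability from its lower bound $\tilde q\geq \tfrac{1}{M}(\rho|u-u_-|^3+\rho^{\gamma+\theta})-M(\rho+\rho(u-u_-)^2+\rho^\gamma)$. First, multiplying the first equation of \eqref{approx} by $A\tilde\eta_\rho$, the second by $A\tilde\eta_m$, and adding, I derive an entropy identity of the form $(A\tilde\eta)_t+(A\tilde q)_x = A'\,G(\rho,m) - 2\delta\rho\rho_x A\tilde\eta_m + \varepsilon\bigl(A(\tilde\eta_\rho \rho_x+\tilde\eta_m m_x)\bigr)_x - \varepsilon A(\rho_x,m_x)\nabla^2\tilde\eta(\rho_x,m_x)^\top + \mathcal R$, where $G:=\tilde q - m\tilde\eta_\rho - \tfrac{m^2}{\rho}\tilde\eta_m$ and $\mathcal R$ gathers lower-order terms containing $A'$, $(A'/A)'$, and $m$ produced by the geometric source and by the extra viscous term $\varepsilon(A'/A)\rho_x,\varepsilon(A'/A)m)_x$.

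Second, I integrate this identity over $(0,T)\times(a,x)$ for an arbitrary $x\in K$. The key observation is that the boundary contribution at $y=a$ equals $-\int_0^T A(a)\tilde q(\rho_-,m_-)\,dt\geq 0$, since $\tilde q(\rho_-,m_-)<0$ by Lemma \ref{entropy2}, so moving it to the left only strengthens the inequality. The temporal integrals $\int_a^x A|\tilde\eta|\,dy$ at $t=0,T$ are controlled uniformly by $M(E_0+1)$ via $|\tilde\eta|\leq M\bigl(\rho(u-u_-)^2+\rho(\rho^\theta-\rho_-^\theta)^2\bigr)$ (Lemma \ref{entropy2}) together with Proposition \ref{energy}. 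The outcome is a pointwise-in-$x$ bound on $\int_0^T A(x)\tilde q(t,x)\,dt$ by the space-time integral of the right-hand side from $a$ to $x$, to be integrated over $K$ at the end.

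Third, I estimate each source term on the right-hand side. The $A'$ piece is controlled by $|G|\leq M\tilde q+M$ (Lemma \ref{entropy2}) combined with $\|A'\|_{L^1}\leq A_2$, giving a constant plus a small multiple of $\int\!\!\int A\tilde q$ that is absorbed on the left. The viscous dissipation $\varepsilon A(\rho_x,m_x)\nabla^2\tilde\eta(\rho_x,m_x)^\top$ is dominated by $M\varepsilon(h_\delta''(\rho)\rho_x^2+\rho u_x^2)A$ via Lemma \ref{hessianbound}, hence by $M(E_0+1)$ via Proposition \ref{energy}; the further viscous remainders built from $A'$ and $(A'/A)'$ are estimated by Cauchy--Schwarz against the same dissipation, producing at most a $M\varepsilon|a|$ contribution (this is the source of the $\varepsilon|a|$ term, coming from integrating bounded quantities against $\bar\rho$ over a domain of length comparable to $|a|$). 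The $\delta$-term is the delicate one: writing $-2\delta\rho\rho_x A\tilde\eta_m = -\delta(\rho^2)_x A\tilde\eta_m$ and integrating by parts in $y$, the boundary at $y=a$ vanishes because $\tilde\eta_m(\rho_-,m_-)=0$ (Lemma \ref{entropy2}); the remaining interior integral $\delta\int_0^T\!\int_a^x\rho^2(A\tilde\eta_m)_x\,dy\,dt$ is bounded via $|(\tilde\eta_m)_\rho|\leq M\rho^{\theta-1}$ and $|(\tilde\eta_m)_u|\leq M$; Young's inequality then converts terms like $\delta\rho^2|u_x|$ and $\delta\rho^{\theta+1}|\rho_x|$ into $\varepsilon$-weighted dissipation (absorbed via Proposition \ref{energy}) plus $M\delta^2\varepsilon^{-1}\rho^3$, and Lemma \ref{rhocubed} precisely gives $\varepsilon\int_0^T\!\int_a^x\rho^3 A\,dy\,dt\leq M|a|^\beta$, producing the explicit $|a|^\beta\delta/\varepsilon$ contribution.

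The main obstacle is exactly this $\delta$-term: the artificial pressure correction $\delta\rho^2$ creates a dissipative coupling that is not automatically dominated by the natural energy dissipation from Proposition \ref{energy}, and it is the role of Lemma \ref{rhocubed} to convert the resulting $\rho^3$-integrals into the sharp factor $|a|^\beta\delta/\varepsilon$ appearing in the statement. To conclude, I integrate the pointwise bound over $x\in K$, then use the lower bound on $\tilde q$ from Lemma \ref{entropy2} together with Lemma \ref{gamma+1} and Proposition \ref{energy} to absorb the subtracted lower-order terms $\rho+\rho(u-u_-)^2+\rho^\gamma$; finally, the Galilean shift $|u|^3\leq M(|u-u_-|^3+|u_-|^3)$ converts the resulting bound on $\rho|u-u_-|^3$ into the desired bound on $\rho|u|^3$, completing the proof.
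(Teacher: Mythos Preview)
Your overall strategy matches the paper's exactly: use the entropy pair $(\tilde\eta,\tilde q)$ from Lemma~\ref{entropy2}, integrate the resulting identity over $(0,T)\times(a,x)$, exploit $\tilde q(\rho_-,m_-)<0$ to drop the boundary term at $a$, control the viscous Hessian via Lemma~\ref{hessianbound} and Proposition~\ref{energy}, and handle the $\delta$-correction by integrating by parts and invoking Lemma~\ref{rhocubed}. The identification of the $\delta$-term as the source of $|a|^\beta\delta/\varepsilon$ and of the geometric viscous remainders as the source of $\varepsilon|a|$ is correct.

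There is one genuine gap. You write that the $A'$ source term, bounded by $|G|\leq M\tilde q+M$, yields ``a small multiple of $\int\!\!\int A\tilde q$ that is absorbed on the left''. This does not work: at the pointwise-in-$x$ stage your left side is $\int_0^T A(x)\tilde q(t,x)\,dt$, while the right contains $M\int_0^T\!\int_a^x |A'(y)|\,\tilde q(t,y)\,dy\,dt$, an integral over the \emph{interval} $(a,x)$. Since $\|A'/A\|_{L^\infty}$ is merely bounded (not small), and the $y$-integral covers $(a,x)\not\subset K$, there is no absorption---neither before nor after integrating over $K$. The paper handles this by treating the pointwise inequality as an integral inequality in $x$ and applying Gronwall's inequality (see \eqref{eq:frog}), which produces the harmless factor $e^{\int_a^x|A'|\,dy}\leq e^{A_2}$. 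Once you replace ``absorption'' by this Gronwall step, your argument goes through and coincides with the paper's.
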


\smallskip
\begin{proof}
 \, We prove the property for the nozzles satisfying condition \eqref{1.3a}, since
the other case \eqref{1.3b} can be handled by corresponding similar arguments.

\medskip
We multiply
   the continuity equation in \eqref{approx} by $\tilde{\eta}_\rho A$
 and the momentum equation by
 $\tilde{\eta}_m A$ to obtain that, after a short calculation,
 \begin{align}
  &(\tilde{\eta}A)_t+(\tilde{q}A)_x+A'\big(-\check{q}+m\check{\eta}_\rho+\frac{m^2}{\rho}\check{\eta}_m\big)
  +A'\check{\eta}_m(\rho_-,m_-)p(\rho)\nonumber\\
  &=\, \varepsilon \big(\rho_{xx}+\frac{A'}{A}\rho_x\big)\tilde{\eta}_\rho A
  +\varepsilon \big(m_x+\frac{A'}{A}m\big)_x\tilde{\eta}_mA -(\delta\rho^2)_x\tilde{\eta}_m A. \label{qtildeeqn}
 \end{align}

Integrating both sides of \eqref{qtildeeqn}, we find
\begin{align}
 &\int_0^TA(x)\tilde{q}(x)\,dt\nonumber\\
 &=\int_0^TA(a)\tilde{q}(a)\,dt-\int_a^x(\tilde{\eta}(T,y)-\tilde{\eta}(0,y))A(y)\,dy\nonumber\\
 &\quad -\int_0^T\int_a^xA'(-\check{q}+m\check{\eta}_\rho+\frac{m^2}{\rho}\check{\eta}_m)\,dy\,dt\nonumber\\
 &\quad +\varepsilon\int_0^T\int_a^x \big((\rho_{yy}+\frac{A'}{A}\rho_y)\tilde{\eta}_\rho+(m_y+\frac{A'}{A}m)_y\tilde{\eta}_m\big)A\,dy\,dt\nonumber\\
 &\quad -\delta\int_0^T\int_a^xA(\rho^2)_y\tilde{\eta}_m\,dy\,dt-\int_0^T\int_a^xA'\check{\eta}_m(\rho_-,m_-)p(\rho)\,dy\,dt. \label{mainineq}
 \end{align}

\medskip
Now we employ the crucial observation that the first term, which is poorly controlled in absolute value,
 is actually negative by Lemma \ref{entropy2},
 and so may be neglected.
 Considering the fourth integral on the right hand side, we integrate by parts to see
 \begin{align}
 &\eps\int_0^T\int_a^x\big((\rho_{yy}+\frac{A'}{A}\rho_y)\tilde{\eta}_\rho+(m_{y}+\frac{A'}{A}m)_y\tilde\eta_m\big)A\,dy\,dt\nonumber\\
 & =\eps\int_0^TA(y)\big(\tilde\eta(t,x)\big)_x\,dt-\eps\int_0^T\int_a^xA(y)\big(\rho_y(\tilde\eta_\rho)_y+m_y(\tilde\eta_m)_y\big)\,dy\,dt\nonumber\\
 &\quad +\eps\int_0^T\int_a^x\big(\frac{A'}{A}\big)_ym\tilde\eta_m A(y)\,dy\,dt, \label{4.5a}
 \end{align}

 \smallskip
 \noindent
 as $\tilde{\eta}_\rho=\tilde{\eta}_m=0$ at $x=a$ by construction. Likewise, integrating by parts in the fifth integral,
\begin{align}
\de\int_0^T&\int_a^x(\rho^2)_y\tilde\eta_mA(y)\,dy\,dt\nonumber\\
 =&-\de\int_0^T\int_a^x\big(\rho^2((\tilde\eta_m)_\rho\rho_y+(\tilde\eta_m)_uu_y)A(y)+\rho^2\tilde\eta_mA'(y)\big)\,dy\,dt\nonumber\\
  &+\int_0^T\rho^2\tilde\eta_m A(x)\,dt.\label{4.6a}
 \end{align}

\medskip
 Combining \eqref{mainineq}--\eqref{4.6a}
and applying the bound for $-\check{q}+m\check{\eta}_\rho+\frac{m^2}{\rho}\check{\eta}_m$ from Lemma \ref{entropy2},
 \begin{align}
  \int_0^T\tilde{q}(x)A(x)\,dt\leq&\, I(x)+M\int_0^T\int_a^x|A'(y)|(\tilde{q}(t,y)+1)\,dy\,dt\nonumber\\
  &+\eps\int_0^T\big(A(x)\tilde\eta(t,x)\big)_x\,dt-\de\int_0^T\rho^2\tilde\eta_mA(x)\,dt, \label{4.7a}
\end{align}
where
\begin{align*}
I(x)=&\,M\int_a^xA(x)|\tilde{\eta}(T,y)-\tilde{\eta}(0,y)|\,dy\,dt+\varepsilon\int_0^T\int_a^x\big|A\big(\frac{A'}{A}\big)_ym\tilde{\eta}_m\big|\,dy\,dt\\
&+\varepsilon\int_0^T\int_a^x\big|A\rho_y(\tilde{\eta}_\rho)_y+Am_y(\tilde{\eta}_m)_y\big|\,dy\,dt\\
&+\delta\int_0^T\int_a^x\big|A\rho^2\big((\tilde{\eta}_m)_\rho\rho_y+(\tilde{\eta}_m)_uu_y\big)\big|\,dy\,dt\\
&+\delta\int_0^T\int_a^x\big|A'\rho^2\tilde{\eta}_m\big|\,dy\,dt+\int_0^T\int_a^x\big|A'\check{\eta}_m(\rho_-,m_-)p(\rho)\big|\,dy\,dt\\
&+\varepsilon\int_0^T|A'||\tilde{\eta}(t,x)|\,dt\\
=&\,I_1+\cdots+I_7.
\end{align*}
Note that $I$ is increasing in $x$ and $|\frac{A'}{A}|\leq M$. By the Gronwall inequality,
\begin{align}
 \int_0^TA\tilde{q}(x)\,dx\leq&\, I(x)e^{\int_a^x|A'|\,dy}+\varepsilon\int_0^T\big(A\tilde{\eta}(t,x)\big)_x\,dt-\int_0^T\delta\rho^2\tilde{\eta}_mA\,dt\nonumber\\
 &+\int_a^x\Big(\varepsilon\big(\int_0^TA\tilde{\eta}(t,y)\,dt\big)_y-\int_0^T\delta\rho^2\tilde{\eta}_mA\,dt\Big)|A'|e^{\int_y^x|A'|\,ds}\,dy.\nonumber\\
 &\label{eq:frog}
\end{align}
We take a smooth cut-off function $\om(x)$ with compact support in $K$ such that $\om(x)\leq 1$.
Observe first that, by the bound $|\tilde\eta(\rho,u)|\leq M(\rho|u-u_-|^2+\rho(\rho^\th-(\rho_-)^\th)^2)$, we obtain, from Proposition \ref{energy},
\beq\label{ineq:etatilde}
\int_a^x\tilde\eta(t,y)A(y)\,dy\,dt\leq M(E_0+1)  \qquad\,\,\,\mbox{for any $x\in \supp(\omega)$}.
\eeq
Thus, we may integrate \eqref{eq:frog} against $\om$ and estimate the terms on the right as follows:
\begin{align}
&\Big|\int_K\omega\int_a^x\varepsilon\big(\int_0^TA\tilde{\eta}(t,y)\,dt\big)_y|A'|e^{\int_y^x|A'|\,ds}\,dy\,dx\Big|\nonumber\\
&\quad\leq \eps M(\|A'\|_{L^1},\|A''\|_{L^\infty})\int_K\omega\int_0^T\int_a^xA(y)\tilde{\eta}(t,y)\,dy\,dt\,dx+M\leq \eps M|a|,\nonumber\\
& \label{ineq:integral1}\\
&\Big|\int_a^x\hspace{-0.7mm}\int_0^T\delta\rho^2\tilde{\eta}_mA\,dt|A'|e^{\int_x^y|A'|ds}dy\Big|\leq M\int_0^T\hspace{-0.7mm}\int_a^x\delta\rho^3A\,\hspace{-0.25mm}dy\,\hspace{-0.25mm}dt\leq \frac{M|a|^\beta\delta}{\varepsilon},\label{ineq:integral2}\\
&\Big|\varepsilon\int_K\omega\int_0^T\big(A\tilde{\eta}(t,x)\big)_x\,dt\,dx\Big|=\varepsilon\int_0^T\int_K\big|\omega_xA\tilde{\eta}(t,x)\big|\,dx\,dt
\leq M,\label{ineq:integral3}
\end{align}
where we have used Lemma \ref{rhocubed}.

Therefore, the lower bound $\check{q}\geq \frac{1}{M}(\rho|u-u_-|^3+\rho^{\gamma+\theta})-M(\rho+\rho|u-u_-|^2+\rho^\gamma)$ yields
that, after multiplying by $\om$ and integrating in $x$, we have
\begin{align*}
\int_K&\int_0^T\omega A\big(\rho|u-u_-|^3+\rho^{\gamma+\theta}\big)\,dt\,dx\\
&\leq\int_K\omega(x)I(x)e^{\int_a^x|A'|\,ds}\,dx
+M\int_K\omega\int_0^T\big(\rho|u-u_-|^2+\rho^\gamma+\rho\big)\,dt\,dx\\
&\quad +M\big(1+\frac{|a|^\be\de}{\eps}+\eps |a|\big).
\end{align*}
 For the term involving $I(x)$,
we have seen that $\int_K|I_1|\,dx\leq M$ by \eqref{ineq:etatilde}.
Now we can use the inequality for $|m\tilde{\eta}_m|$ from Lemma \ref{entropy2} to obtain
\begin{align*}
&\int_K\omega |I_2|\,dx \\
&\leq M \int_K\omega\varepsilon\int_0^T\int_a^x\big|A\big(\frac{A'}{A}\big)_ym\tilde{\eta}_m\big|\,dy\,dt\,dx\\
&\leq M\int_K\omega\varepsilon\int_0^T\int_a^x\big|A\big(\frac{A'}{A}\big)_y\big(\rho+\rho(u-u_-)^2+\rho(\rho^\theta-(\rho_-)^\theta)^2\big)\big|\,dy\,dt\,dx\\
&\leq M\int_K\omega\varepsilon\int_0^T\int_a^x\big|A\big(\frac{A'}{A}\big)_y(1+\overline{\eta_\delta^*})\big|\,dy\,dt\,dx\\
&\leq \varepsilon M\big\|A\big(\frac{A'}{A}\big)'\big\|_{L^\infty(a,x)}|a|\\
& \leq \varepsilon M |a|,
\end{align*}
where we have used that $x\in K$ and $M$ depends on $K$.

Moreover, we have
\begin{align*}
|I_3|=&\,\varepsilon\int_0^T\int_a^x|A\rho_y(\tilde{\eta}_\rho)_y+Am_y(\tilde{\eta}_m)_y|\,dy\,dt\\
=&\,\varepsilon\int_0^T\int_a^xA|(\rho_y,m_y)\nabla^2\tilde{\eta}(\rho_y,m_y)^\top|\,dy\,dt\\
\leq&\, M_\psi\varepsilon\int_0^T\int_a^xA(\rho_y,m_y)\nabla^2\overline{\eta^*}(\rho_y,m_y)^\top\,dy\,dt\\
\leq&\, ME_0,
\end{align*}
by Proposition \ref{energy} and Lemma \ref{hessianbound}.

Next, also by Lemmas \ref{entropy2}--\ref{rhocubed}, we obtain
\begin{align*}
|I_4|=&\,\delta\int_0^T\int_a^x\big|A\rho^2\big((\tilde{\eta}_m)_\rho\rho_y+(\tilde{\eta}_m)_uu_y\big)\big|\,dy\,dt\\
\leq&\,\delta\int_0^T\int_a^xA\rho^2\big(M\rho^{\theta-1}|\rho_y|+|u_y|\big)\,dy\,dt\\
\leq&\,\delta M\big(\int_0^T\int_a^x A\rho^3\,dy\,dt\big)^{\frac{1}{2}}\big(\int_0^T\int_a^xA\rho^{\gamma-2}|\rho_y|^2\,dy\,dt\big)^{\frac{1}{2}}\\
&\,+\delta M\big(\int_0^T\int_a^xA\rho^3\,dy\,dt\big)^{\frac{1}{2}}\big(\int_0^T\int_a^xA\rho|u_y|^2\,dy\,dt\big)^{\frac{1}{2}}\\
\leq&\,\frac{\delta}{\sqrt{\varepsilon}}M\big(\int_0^T\int_a^xA\rho^3\,dy\,dt\big)^{\frac{1}{2}}\\
\leq&\,M\frac{|a|^\beta\delta}{\varepsilon}.
\end{align*}
Again, from Lemma \ref{entropy2},
\begin{align*}
|I_5|\leq&\,M\delta\int_0^T\int_a^x|A'|\rho^2\big(\alpha|\rho^\theta-(\rho_-)^\theta|+M|u-u_-|\big)\,dy\,dt\\
\leq&\,\delta M\int_0^T\int_a^xA(y)\big(\rho^3+\rho(\rho^\theta-(\rho_-)^\theta)^2+\rho|u-u_-|^2\big)\,dy\,dt\\
\leq&\,\delta\big(M+M\int_0^T\int_a^xA\rho^3\,dy\,dt\big)\\
\leq&\, M+M\frac{|a|^\beta\delta}{\varepsilon}.
\end{align*}
Since $|\frac{A'}{A}|$ is uniformly bounded,
\begin{align*}
|I_6|=&\,\int_0^T\int_a^x|A'\check{\eta}_m(\rho_-,m_-)p(\rho)|\,dy\leq M\int_0^T\int_a^x(A\overline{h_\de}(\rho,\bar{\rho})+A')\,dy\\
\leq&\, MT(E_0+\|A'\|_{L^1(-\infty,\sup(K))}),
\end{align*}
and
\begin{align*}
\int_K|I_7|\,dx=\int_K\omega\int_0^T|A'||\tilde{\eta}|\,dt\,dx\leq M\int_0^T\int_{\supp\,\omega}A(y)|\tilde{\eta}(t,x)|\,dx\,dt\leq M.
\end{align*}
These estimates for $I_j$, $j=1,...7$, show that
$$\int_K\omega(x)I(x)e^{\int_a^x|A'|\,ds}\,dx\leq M,$$
and hence we conclude the proof.
\qed\end{proof}

Once we have shown the above estimates to be uniform, we can apply the compensated compactness techniques from
\cite{ChenPerep1}, as in \cite{ChenPerep2}.
To make this rigorous, we require our approximate solutions in the construction to satisfy the following:
\begin{enumerate}
  \item[(i)] $(\rho_0^\varepsilon,m_0^\varepsilon)\rightarrow(\rho_0,m_0)$ a.e. $x\in\mathbb{R}$ as $\varepsilon\rightarrow 0$,
   where we take $(\rho_0^\varepsilon,m_0^\varepsilon)$ to be the zero extension of $(\rho_0^\varepsilon,m_0^\varepsilon)$
   outside $(a,b)$;\

\medskip
\item[(ii)] $\int_a^b\overline{\eta_\delta^*}(\rho_0^\varepsilon(x), m_0^\varepsilon(x))A(x)\,dx
  \to \int_\mathbb{R}\overline{\eta^*}(\rho_0(x), m_0(x))A(x)\,dx$
       as $\varepsilon\rightarrow 0$;

\medskip
\item[(iii)] $\frac{\delta|a|^\beta}{\varepsilon}\leq M$;

\smallskip
\item[(iv)] $\varepsilon|b-a|\leq M$,
\end{enumerate}
where $M<\infty$ is independent of $\varepsilon\in(0,1]$.

With these properties of the approximate solutions, the bounds in the above
lemmas become uniform in $\varepsilon$.

\begin{proposition}\label{compactness}
 Let $(\eta,q)$ be the entropy pair generated by $\psi\in C_c^\infty(\mathbb{R})$. Then the entropy dissipation
 measures{\rm :}
 $$
 \eta(\rho^\varepsilon,m^\varepsilon)_t+q(\rho^\varepsilon,m^\varepsilon)_x,
 $$
 lie in a compact subset of $H^{-1}_{loc}$.
\end{proposition}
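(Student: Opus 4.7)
The plan is to invoke the Murat--Tartar interpolation lemma: it suffices to decompose $\eta_t^\varepsilon + q_x^\varepsilon := \eta(\rho^\varepsilon,m^\varepsilon)_t+q(\rho^\varepsilon,m^\varepsilon)_x$ as $\mathcal{F}^\varepsilon + \mathcal{G}^\varepsilon$, where $\mathcal{F}^\varepsilon$ is precompact in $W^{-1,q}_{loc}$ for some $q \in (1,2)$ and $\mathcal{G}^\varepsilon$ is uniformly bounded in $W^{-1,p}_{loc}$ for some $p > 2$. Since $\psi \in C_c^\infty(\mathbb{R})$, the formulas in \eqref{2.3a}--\eqref{2.3b} give the pointwise bounds $|\eta^\psi(\rho,m)| \leq M\rho$ and $|q^\psi(\rho,m)| \leq M(\rho|u| + \rho^{\theta+1})$. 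Together with Lemma \ref{gamma+1} and Lemma \ref{unif2}, this yields $\eta^\varepsilon, q^\varepsilon$ uniformly bounded in $L^r_{loc}$ for some $r>2$, whence $\eta^\varepsilon_t + q^\varepsilon_x$ is uniformly bounded in $W^{-1,r}_{loc}$ --- this part will play the role of $\mathcal{G}^\varepsilon$.

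For the compactly-controlled contribution $\mathcal{F}^\varepsilon$, I would compute directly via the approximate system \eqref{approx}, using $\nabla q = \nabla\eta \cdot \nabla(m, m^2/\rho+p)$ (note that $(\eta,q)$ is defined with respect to the physical pressure $p$, not $p_\delta$). This yields the identity
\begin{equation*}
\eta^\varepsilon_t + q^\varepsilon_x = -\tfrac{A'}{A}\big(m\eta_\rho + \tfrac{m^2}{\rho}\eta_m - \eta_m p(\rho)\big) - 2\delta\rho\rho_x \eta_m + \varepsilon\big(\eta_\rho \rho_x + \eta_m m_x\big)_x - \varepsilon(\rho_x,m_x)\nabla^2\eta(\rho_x,m_x)^\top + \varepsilon\tfrac{A'}{A}\big(\eta_\rho \rho_x + \eta_m m_x\big).
\end{equation*}
Because $\psi$ has compact support, $\eta_\rho, \eta_m$ are bounded, and the geometric term in the first group lies in $L^1_{loc}$ by the energy estimate (Proposition \ref{energy}) and Lemma \ref{gamma+1}, since $|A'/A|$ is uniformly bounded and $\tfrac{m^2}{\rho}, p(\rho)$ are in $L^1_{loc}$. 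By Lemma \ref{hessianbound} (Hessian comparison with $\eta^*$) together with Proposition \ref{energy}, the second-order viscous term $\varepsilon(\rho_x,m_x)\nabla^2\eta(\rho_x,m_x)^\top$ is uniformly bounded in $L^1_{loc}$ as well. Hence these two pieces are bounded in $\mathcal{M}_{loc}$, which embeds compactly into $W^{-1,q}_{loc}$ for any $q \in [1,2)$.

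The remaining pieces must be shown to vanish in $H^{-1}_{loc}$. For the viscous divergence $\varepsilon(\eta_\rho\rho_x+\eta_m m_x)_x$, I would use Lemma \ref{hessianbound} and Cauchy--Schwarz to estimate $\|\sqrt{\varepsilon}(\eta_\rho\rho_x + \eta_m m_x)\|_{L^2_{loc}}^2 \leq M\varepsilon \int (\rho_x,m_x)\nabla^2\eta^*(\rho_x,m_x)^\top\,dxdt$, which is $O(\varepsilon)$ by Proposition \ref{energy}; similarly the last error $\varepsilon\tfrac{A'}{A}(\eta_\rho\rho_x + \eta_m m_x)$ is $O(\sqrt{\varepsilon})$ in $L^2_{loc}$. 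For the artificial pressure term, $2\delta\rho\rho_x \eta_m = (\delta\rho^2)_x \eta_m$, and after integration by parts against a test function, control reduces to showing $\delta \int\int \rho^2\,dxdt \to 0$ on compact sets, which follows from Lemma \ref{rhocubed} and the assumption $\delta|a|^\beta/\varepsilon \leq M$.

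The main obstacle, and the point where the argument genuinely differs from the standard Chen--Perepelitsa strategy, will be the artificial pressure term: controlling $\delta\rho\rho_x$ requires carefully balancing the parameters $(\varepsilon,\delta,a,b)$ and exploiting precisely the constraint $\delta|a|^\beta/\varepsilon \leq M$ together with Lemma \ref{rhocubed}. Once all error pieces have been shown to vanish in $H^{-1}_{loc}$, Murat's lemma applied to $\mathcal{F}^\varepsilon$ (compact in $W^{-1,q}_{loc}$) and $\mathcal{G}^\varepsilon$ (bounded in $W^{-1,r}_{loc}$) yields compactness of $\eta^\varepsilon_t + q^\varepsilon_x$ in $H^{-1}_{loc}$.
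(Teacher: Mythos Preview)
Your overall strategy (Murat--Tartar interpolation, with the same five-term decomposition) matches the paper's, and your treatment of the geometric source term, the Hessian term $I_3$, and the $W^{-1,r}_{loc}$ boundedness from Lemmas~\ref{gamma+1} and~\ref{unif2} is fine. But there is a genuine gap in your handling of the viscous divergence $\varepsilon\eta^\varepsilon_{xx}=\varepsilon(\eta_\rho\rho_x+\eta_m m_x)_x$. You claim
\[
\bigl\|\sqrt{\varepsilon}(\eta_\rho\rho_x+\eta_m m_x)\bigr\|_{L^2_{loc}}^2\leq M\,\varepsilon\int(\rho_x,m_x)\nabla^2\eta^*(\rho_x,m_x)^\top\,dx\,dt,
\]
invoking Lemma~\ref{hessianbound}. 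That lemma compares \emph{Hessians}, not gradients: it gives $|\xi^\top\nabla^2\eta^\psi\xi|\leq M_\psi\,\xi^\top\nabla^2\eta^*\xi$, which says nothing about $|\nabla\eta\cdot\xi|^2$. For your pointwise inequality one would need $\nabla\eta^\top(\nabla^2\eta^*)^{-1}\nabla\eta\leq M$, and a direct computation gives this quantity as $\frac{\rho}{p'(\rho)}(\eta_\rho+u\eta_m)^2+\rho\,\eta_m^2$, which is unbounded as $\rho\to\infty$ (via the factor $\rho$) and, for $\gamma>2$, also as $\rho\to0$ (via $\rho^{2-\gamma}$). The $\delta$-floor in $h_\delta''$ only helps by $O(1/\delta)$, which is not uniform. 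The same objection applies to your treatment of $\varepsilon\frac{A'}{A}\eta_x$.

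The paper does \emph{not} try to place $\varepsilon\eta_x$ in $L^2_{loc}$. Instead it proves, via an auxiliary multiplier $\phi(\rho)$ truncating at level $\Delta$ and a careful splitting into $\{\rho<\Delta\}$ and $\{\rho>\Delta\}$, that $\varepsilon^{3/2}\int_K|\rho_x|^2\,dx\,dt\to0$; this together with the weighted bounds $\varepsilon\int\rho^{\gamma-2}\rho_x^2$ and $\varepsilon\int\rho u_x^2$ from Proposition~\ref{energy} yields $\varepsilon\eta_x\to0$ in $L^q_{loc}$ for the specific $q\in(1,2)$ with $\frac{q}{2-q}=\gamma+1$, hence $\varepsilon\eta_{xx}\to0$ in $W^{-1,q}_{loc}$. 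Your treatment of the artificial-pressure term $(\delta\rho^2)_x\eta_m$ by integration by parts is also incomplete, since $(\eta_m)_x$ reappears and contains $\rho_x,u_x$; the paper instead bounds this term directly in $L^1_{loc}$ using Young's inequality against $\varepsilon\rho^{\gamma-2}\rho_x^2$ (and $\varepsilon\delta\rho_x^2$ when $\gamma>3$) together with Lemma~\ref{gamma+1}.
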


\begin{proof}
\, We divide the proof into six steps.

1. We first recall the following fact from Lemma 2.1 in \cite{ChenPerep1}:
 For a $C^2$--function $\psi:\mathbb{R}\rightarrow\mathbb{R}$ of compact support,
 the associated entropy pair $(\eta, q)$ satisfies
 \begin{align*}
  &|\eta(\rho,m)|\leq M_\psi \rho,\\[1mm]
  &|q(\rho,m)|\leq M_\psi \rho \qquad\qquad\qquad\quad\mbox{for $\gamma\in (1, 3]$},\\[1mm]
  &|q(\rho,m)|\leq M_\psi \rho\max\{1, \rho^\theta\} \qquad\mbox{for $\gamma>3$},\\[1mm]
  &|\eta_m(\rho,m)|+|\rho\eta_{mm}(\rho,m)|\leq M_\psi,\\[1mm]
  &|\eta_\rho(\rho,m)+u\eta_m(\rho,m)|\leq M_\psi(1+\rho^\theta),\\[1mm]
  &|\eta_{mu}(\rho,\rho u)|+|\rho^{1-\theta}\eta_{m\rho}(\rho,\rho u)|\leq M_\psi,
 \end{align*}
 where, in the last inequality, we regard $\eta_m$ as a function of $(\rho,u)$.

\smallskip
2.  Write $\eta^\varepsilon=\eta(\rho^\varepsilon,m^\varepsilon)$ and $q^\varepsilon=q(\rho^\varepsilon,m^\varepsilon)$
with $m^\varepsilon=\rho^\varepsilon u^\varepsilon$. Then
 \begin{align}
  \eta^\varepsilon_t+q^\varepsilon_x
  =&-\frac{A'}{A}\rho^\varepsilon u^\varepsilon\big(\eta^\varepsilon_\rho+u^\varepsilon\eta^\varepsilon_m\big)
  +\varepsilon\big(\frac{A'}{A}\rho^\varepsilon_x\eta^\varepsilon_\rho+\big(\frac{A'}{A}m^\varepsilon\big)_x\eta^\varepsilon_m\big) \nonumber\\
  &-\varepsilon\big(\rho^\varepsilon_x(\eta^\varepsilon_\rho)_x+m^\varepsilon_x(\eta^\varepsilon_m)_x\big)
   +\varepsilon\eta^\varepsilon_{xx}-\big(\delta(\rho^\varepsilon)^2\big)_x\eta^\varepsilon_m \nonumber\\
  =&:I^\varepsilon_1+\cdots+I^\varepsilon_5. \label{div(eta,q)}
 \end{align}
 We want to prove that this is a sum of terms bounded uniformly in the space $L^1(0,T;L^1_{loc}(\mathbb{R}))$ and terms that are compact
 in $W^{-1,q}_{loc}(\mathbb{R}^2_+)$ for some $q>1$.
 Then the compact Sobolev embedding of $L^1$ into $W^{-1,q}$
 (locally in $\mathbb{R}_+^2$) gives the compactness of the entropy dissipation
 measures in some $W^{-1,q_1}_{loc}(\mathbb{R}^2_+)$.

 \smallskip
 3. To this end, first observe that
 \begin{align*}
  |I^\varepsilon_1(t,x)|\leq&\, M\big|\frac{A'}{A}\big|\rho^\varepsilon\big(1+(\rho^\varepsilon)^\theta\big)|u^\varepsilon|
  \leq \, M \big|\frac{A'}{A}\big| \big(\rho^\varepsilon|u^\varepsilon|^2+\rho^\varepsilon+(\rho^\varepsilon)^\gamma\big)
 \end{align*}
 is uniformly bounded in $L^1(0,T;L^1_{loc}(\mathbb{R}))$.

 Now we see
 \begin{align*}
  I^\varepsilon_2=&\,\varepsilon\big(\frac{A'}{A}\eta^\varepsilon_x+\big(\frac{A'}{A}\big)_xm^\varepsilon\eta^\varepsilon_m\big)
  =\varepsilon\big(\frac{A'}{A}\big)_x(m\eta^\varepsilon_m-\eta^\varepsilon)+\varepsilon\big(\frac{A'}{A}\eta^\varepsilon\big)_x\\[1mm]
  =&:I^\varepsilon_{2a}+I^\varepsilon_{2b}.
 \end{align*}
 One can easily check that $|\eta^\varepsilon-m^\varepsilon\eta^\varepsilon_m|\leq M\big(\rho^\varepsilon+\rho^\varepsilon|u^\varepsilon|^2\big)$.
 Thus, $I^\varepsilon_{2a}\rightarrow 0$ in $L^1_{loc}(\mathbb{R}^2_+)$ as $\varepsilon\rightarrow 0$.
 Next, for $\omega\in C_c^\infty(\mathbb{R}^2_+)$,
 \begin{align*}
  \varepsilon\big|\int_{\supp\,\omega}I^\varepsilon_{2b}\omega(t,x)\,dx\,dt\big|
  &=\big|\varepsilon\int_{\supp\,\omega}\frac{A'}{A}\eta^\varepsilon\omega_x\,dx\,dt\big|\\
  &\leq \varepsilon M(\supp\,\omega)\|\rho^\varepsilon\|_{L^{\gamma+1}(\supp\,\omega)}\|\omega\|_{H^1(\mathbb{R}^2_+)}.
 \end{align*}
 Hence, $I^\varepsilon_{2b}\rightarrow 0$ in $H^{-1}_{loc}(\mathbb{R}^2_+)$ as $\varepsilon\rightarrow 0$.

\smallskip
 Moreover,
 \begin{align*}
  |I^\varepsilon_3|=&\,\varepsilon|\langle\nabla^2\eta(\rho^\varepsilon,m^\varepsilon)(\rho^\varepsilon_x,m^\varepsilon_x),(\rho^\varepsilon_x,m^\varepsilon_x)\rangle|\\
  \leq&\, M_\psi\varepsilon\langle\nabla^2\overline{\eta^*}(\rho^\varepsilon,m^\varepsilon)(\rho^\varepsilon_x,m^\varepsilon_x),(\rho^\varepsilon_x,m^\varepsilon_x)\rangle,
 \end{align*}
 which is uniformly bounded in $L^1(0,T;L^1_{loc}(\mathbb{R}))$ by the main energy estimate, Proposition \ref{energy}.

\bigskip
4. The next step is to show that $I_4^\varepsilon\rightarrow 0$ in $W^{-1,q}_{loc}$ for some $q>1$.
 \begin{claim}
  \, Let $K\subset\mathbb{R}$ be compact, and let $0<\Delta<1$. Then
 $$
 \int_0^T\int_K\varepsilon^{\frac{3}{2}}|\rho^\varepsilon_x|^2\,dx\,dt\leq M(T,\supp\,\omega)\big(\De+\eps+\eps\De^{4-\ga}+\frac{\eps^2}{\De}\big).
 $$
  Hence
  $$
  \int_0^T\int_K\varepsilon^{\frac{3}{2}}|\rho^\varepsilon_x|^2\,dx\,dt\rightarrow 0,
  $$
  and
  $$
  \varepsilon\eta^\varepsilon_x\rightarrow 0 \qquad\, \text{ in $L^q(0,T;L^q_{loc}(\mathbb{R}))$ for some $q\in(1,2)$.}
  $$
 \end{claim}
 We now drop the superscript $\varepsilon$ and prove the claim.
 Define
 \begin{equation*}
  \phi(\rho)=\begin{cases}
   \frac{\rho^2}{2}, &\rho<\Delta,\\[1mm]
   \frac{\Delta^2}{2}+\Delta(\rho-\Delta), &\rho\geq\Delta,
  \end{cases}
 \end{equation*}
 so that $\phi''(\rho)=\chi_{\{\rho<\Delta\}}$, and
 \begin{equation*}
  \rho\phi'(\rho)-\phi(\rho)=\begin{cases}
   \frac{\rho^2}{2}, &\rho<\Delta,\\[1mm]
   \frac{\Delta^2}{2}, &\rho\geq\Delta.
  \end{cases}
 \end{equation*}
 Let $\omega=\omega(x)\geq 0$ and $\omega\in C_c^\infty(\mathbb{R})$. From the approximate continuity equation \eqref{approx},
 \begin{align*}
 &(\phi\omega^2)_t+(\phi u\omega^2)_x-2\phi u\omega_x\omega\\
 &=\phi'(\rho)\rho_t\omega^2+\phi'(\rho)\rho_x u\omega^2+\phi(\rho)u_x\omega^2\\
  &=-\phi'(\rho)\rho_x u\omega^2-\phi'(\rho)\rho u_x\omega^2+\phi'(\rho)\rho_x u\omega^2 +\phi(\rho)u_x\omega^2\\
  &\quad -\frac{A'}{A}\phi'(\rho)\rho u\omega^2 +\varepsilon\phi'(\rho)\big(\rho_{xx}+\frac{A'}{A}\rho_x\big)\omega^2\\
  &=-\frac{A'}{A}\rho u\omega^2\min\{\rho,\Delta\}-\frac{1}{2}u_x\omega^2\big(\rho^2\chi_{\{\rho<\Delta\}}+\Delta^2\chi_{\{\rho>\Delta\}}\big)+\varepsilon(\phi'\omega^2\rho_x)_x\\
  &\quad -\varepsilon\omega^2|\rho_x|^2\chi_{\{\rho<\Delta\}}-2\varepsilon\min\{\rho,\Delta\}\omega_x\rho_x\omega+\varepsilon\frac{A'}{A}\min\{\rho,\Delta\}\rho_x\omega^2.
 \end{align*}

 Thus, by integrating, we have
 \beqas
  \int_0^T\int&\varepsilon\omega^2|\rho_x|^2\chi_{\{\rho<\Delta\}}\,dx\,dt\\
  =&-\int\phi\omega^2\vert_0^T\,dx+2\int_0^T\int\phi u\omega_x\omega \,dx\,dt\\
  &-\frac{1}{2}\int_0^T\int(\rho^2\chi_{\{\rho<\Delta\}}+\Delta^2\chi_{\{\rho>\Delta\}})\omega^2u_x\,dx\,dt\\
  &-\int_0^T\int\frac{A'}{A}\rho u\omega^2\min\{\rho,\Delta\}\,dx\,dt-2\int_0^T\int\varepsilon\min\{\rho,\Delta\}\omega_x\omega\rho_x\,dx\,dt\\
  &+\int_0^T\int\varepsilon\frac{A'}{A}\min\{\rho,\Delta\}\rho_x\omega^2\,dx\,dt\\
  =&:J_1+\cdots +J_6.
 \eeqas
 The first four terms, $J_1,...,J_4$, are bounded by using the uniform energy estimates, giving a bound of $M\frac{\De}{\sqrt{\eps}}$.
 We therefore focus on the last two terms, $J_5$ and $J_6$.
 \beqas
  |J_5|\leq&\,2\sqrt{\varepsilon}\int_0^T\int\sqrt{\varepsilon}\rho\chi_{\{\rho<\Delta\}}|\omega_x|\omega|\rho_x|\,dx\,dt\\
    &+2\sqrt{\varepsilon}\int_0^T\int\sqrt{\varepsilon}\Delta\chi_{\{\rho>\Delta\}}|\omega_x|\omega|\rho_x|\,dx\,dt\\
  \leq&\,\frac{\varepsilon}{4}\int_0^T\int|\rho_x|^2\chi_{\{\rho<\Delta|\}}\omega^2\,dx\,dt+M\varepsilon\int_0^T\int\Delta^2|\omega_x|^2\,dx\,dt\\
  &\,+M\sqrt{\varepsilon}\int_0^T\int\sqrt{\varepsilon}|\rho_x|\rho^{\frac{\gamma-2}{2}}\chi_{\{\rho>\Delta\}}\rho^{\frac{2-\gamma}{2}}\Delta|\omega_x|\omega \,dx\,dt\\
  \leq&\,\frac{\varepsilon}{4}\int_0^T\int|\rho_x|^2\chi_{\{\rho<\Delta|\}}\omega^2\,dx\,dt+\varepsilon\Delta^2M\\
  &\,+M\sqrt{\varepsilon}\int_0^T\int\varepsilon\rho^{\gamma-2}|\rho_x|^2\omega^2\,dx\,dt\\
  &\,+M\sqrt{\varepsilon}\int_0^T\int\chi_{\{\rho>\Delta\}}\rho^{2-\gamma}\Delta^2\omega_x^2\,dx\,dt\\
  \leq&\,\frac{\varepsilon}{4}\int_0^T\int|\rho_x|^2\chi_{\{\rho<\Delta\}}\omega^2\,dx\,dt+\varepsilon\Delta^2M+\sqrt{\varepsilon}M\\
  &\,+M\sqrt{\varepsilon}\int_0^T\int\Delta\chi_{\{\Delta<\rho\}}\rho^{3-\gamma}\omega_x^2\,dx\,dt\\
  \leq&\,\frac{\varepsilon}{4}\int_0^T\int|\rho_x|^2\chi_{\{\rho<\Delta\}}\omega^2\,dx\,dt+\sqrt{\varepsilon}M+\sqrt{\eps}\De^{4-\ga}M,
  \eeqas
  where we have used that $\rho^{3-\ga}\leq \max\{\rho^{\ga+1},1\}$ and  Lemma \ref{gamma+1} when $\ga\in(1,3]$, and
  $\rho^{3-\ga}\leq\De^{3-\ga}$ on $\{\De<\rho\}$ when $\ga>3$.

  Next, using an argument similar to that for $J_5$ and the bounds from Proposition \ref{energy}, we have
  \beqas
  |J_6|\leq&\,\varepsilon\int_0^T\int\big|\frac{A'}{A}\big|\omega^2\rho\chi_{\{\rho<\Delta\}}|\rho_x|dx\,dt+\varepsilon\int_0^T\int\big|\frac{A'}{A}\big|\omega^2\Delta\chi_{\{\rho>\Delta\}}|\rho_x|\,dx\,dt\\
  \leq&\,\frac{\varepsilon}{4}\int_0^T\int|\rho_x|^2\omega^2\chi_{\{\rho<\Delta\}}\,dx\,dt+M\varepsilon\Delta^2\big\|\frac{A'}{A}\big\|_{L^\infty(\supp\,\omega)}|\supp\,\omega\times[0,T]|\\
  &\,+M\sqrt{\varepsilon}\int_0^T\int\sqrt{\varepsilon}\big|\frac{A'}{A}\big||\rho_x|\rho^{\frac{\gamma-2}{2}}\chi_{\{\Delta<\rho\}}\rho^{\frac{2-\gamma}{2}}\Delta\omega^2\,dx\,dt\\
  \leq&\,\frac{\varepsilon}{4}\int_0^T\int|\rho_x|^2\omega^2\chi_{\{\rho<\Delta\}}\,dx\,dt+\sqrt{\varepsilon}\Delta M+\sqrt{\eps}\De^{4-\ga}M.
 \eeqas
 Thus,
 $$
 \int_0^T\int\varepsilon\omega^2|\rho_x|^2\chi_{\{\rho<\Delta\}}\,dx\,dt\leq M\Big(\sqrt{\eps}(1+\De^{4-\ga})+\frac{\De}{\sqrt{\varepsilon}}\Big).
 $$
Moreover, we have
 \begin{align*}
  &\int_0^T\int \varepsilon\omega^2|\rho_x|^2\chi_{\{\rho>\Delta\}}\,dx\,dt\\
  &\leq M\sqrt{\varepsilon}\int_0^T\int\varepsilon\rho^{\gamma-2}|\rho_x|^2\omega^2\chi_{\{\Delta<\rho\}}\,dx\,dt+M\sqrt{\varepsilon}\int_0^T\int\varepsilon\chi_{\{\Delta<\rho\}}\rho^{2-\gamma}\omega^2dx\,dt\\
  &\leq \sqrt{\varepsilon} M+\sqrt{\varepsilon}M\int_0^T\int\varepsilon\chi_{\{\Delta<\rho\}}\omega^2(1+\rho^{\gamma+1})\,dx\,dt\leq\sqrt{\varepsilon}M+\frac{\varepsilon^{\frac{3}{2}}M}{\Delta}.
 \end{align*}
 Therefore, we conclude
 $$
 \int_0^T\int\varepsilon^{\frac{3}{2}}|\rho_x|^2\omega^2\,dx\,dt\leq M\big(\De+\eps+\eps\De^{4-\ga}+\frac{\eps^2}{\De}\big),
 $$
 so that
 $$\int_0^T\int\varepsilon^{\frac{3}{2}}|\rho_x|^2\,dx\,dt\rightarrow 0 \qquad \text{ as $\varepsilon\rightarrow 0$.}$$

 For the second part of the claim, note that
 $$
 |\eta_x|\leq M\big(|\rho_x||\eta_\rho+u\eta_m|+\rho|u_x\eta_m|\big)
    \leq M\big(|\rho_x|(1+\rho^\theta)+\rho|u_x|\big).
 $$
 Let $q\in(1,2)$ be such that $\frac{q}{2-q}=\gamma+1$.
 Then
 \begin{align*}
  \int_0^T\int_K\varepsilon^q|\eta_x|^q\,dx\,dt
  \leq&\, M\int_0^T\int_K\varepsilon^q|\rho_x|^q\,dx\,dt\\
  &+\int_0^T\int_K\varepsilon^q\big||\rho_x|\rho^\theta+\rho|u_x|\big|^q\,dx\,dt\\
  \leq&\,\Delta+\frac{M}{\Delta}\int_0^T\int_K\varepsilon^2|\rho_x|^2\,dx\,dt\\
  &+M\int_0^T\int_K\varepsilon^q\rho^{\frac{q}{2}}\big(\big|\rho^{\frac{\gamma-2}{2}}\rho_x\big|^q+|\sqrt{\rho}u_x|^q\big)\,dx\,dt\\
  \leq&\,\Delta+\frac{M\sqrt{\varepsilon}}{\Delta}\int_0^T\int_K\varepsilon^{\frac{3}{2}}|\rho_x|^2\,dx\,dt\\
  &+\varepsilon^{q-1}M\int_0^T\int_K\varepsilon\big(\rho^{\gamma-2}|\rho_x|^2+\rho|u_x|^2\big)\,dx\,dt\\
  &+\varepsilon^{q-1}M\int_0^T\int_K\varepsilon\rho^{\frac{q}{2-q}}\,dx\,dt\\
  \leq&\,\Delta+\frac{M\sqrt{\varepsilon}}{\Delta}\int_0^T\int_K\varepsilon^{\frac{3}{2}}|\rho_x|^2\,dx\,dt+\varepsilon^{q-1}M.
 \end{align*}
 Thus, $I^\varepsilon_4=\varepsilon\eta^\varepsilon_{xx}\rightarrow 0$ in $W^{-1,q}_{loc}$.

\smallskip
 5. Consider finally $I^\varepsilon_5=\big(\delta(\rho^\varepsilon)^2\big)_x\eta^\varepsilon_m.$ In the case $\ga\leq 3$,
 \begin{align*}
  \int_0^T\int_K|I^\varepsilon_5|\,dx\,dt\leq&\, M_\psi\int_0^T\int_K\delta\rho|\rho_x|\,dx\,dt\\
  \leq&\, M \int_0^T\int_K\big(\varepsilon\rho^{\gamma-2}|\rho_x|^2+\frac{\delta^2}{\varepsilon}\rho^{4-\gamma}\big)\,dx\,dt\\
  \leq&\, M\Big(\int_0^T\int_K\frac{\delta^2}{\varepsilon}\rho^3\,dx\,dt +\frac{\delta^2}{\varepsilon}+1\Big)\\
  \leq&\, M\Big(\frac{\delta^2}{\varepsilon}+\frac{\delta}{\varepsilon}+1\Big),
 \end{align*}
 which is uniformly bounded in $\varepsilon$.

 If $\ga>3$, consider the above on the domains: $\{\rho<1\}$ and $\{\rho>1\}$.
 On the latter domain, we argue as above to obtain
 $$
 \int_0^T\int_{K\cap\{\rho>1\}}|I^\varepsilon_5|\leq M,
 $$
so that
 \beqas
 \int_0^T\int_K|I^\varepsilon_5|\,dx\,dt\leq&\, M+M_\psi\int_0^T\int_{K\cap\{\rho<1\}}\delta\rho|\rho_x|\,dx\,dt\nonumber  \\
 \leq&\,M+M(T|K|)^\half\Big(\int_0^T\int_{K\cap\{\rho<1\}}\de^2\rho^2\rho_x^2\,dx\,dt\Big)^\half \nonumber\\
 \leq&\,M+M\Big(\int_0^T\int_K\eps\de\rho_x^2\,dx\,dt\Big)^\half.  \nonumber
 \eeqas
Thus, $I^\varepsilon_5$ is uniformly bounded in $L^1(0,T;L^1_{loc}(\mathbb{R}))$.

\smallskip
6. In Steps 1--5 above, we have shown that $\eta(\rho^\varepsilon, m^\varepsilon)_t+q(\rho^\varepsilon, m^\varepsilon)_x$ can
be written as
$$
\eta(\rho^\varepsilon, m^\varepsilon)_t+q(\rho^\varepsilon, m^\varepsilon)_x:=f^\varepsilon+g^\varepsilon,
$$
where $f^\varepsilon$ is uniformly bounded in $L^1(0,T;L^1_{loc}(\mathbb{R}))$, and $g^\varepsilon\rightarrow 0$
in $W^{-1,q}_{loc}(\mathbb{R}^2_+)$ for some $q\in(1,2)$.
Thus, there exists $q_1\in(1,2)$ such that
$$
\eta(\rho^\varepsilon, m^\varepsilon)_t+q(\rho^\varepsilon, m^\varepsilon)_x \qquad \text{ is pre-compact in $W^{-1,q_1}_{loc}(\mathbb{R}^2_+)$.}
$$
We also know from Lemmas \ref{gamma+1} and $4.5$ that $\eta(\rho^\varepsilon, m^\varepsilon)$ and $q(\rho^\varepsilon, m^\varepsilon)$
are uniformly bounded in
$L^{q_2}_{loc}(\mathbb{R}^2_+)$,
where $q_2=\gamma+1>2$ when $\gamma\in (1,3]$ and $q_2=\frac{\gamma+\theta}{1+\theta}>2$ when $\gamma>3$.
Then, by the compensated compactness interpolation theorem,
\beqs
\eta(\rho^\varepsilon, m^\varepsilon)_t+q(\rho^\varepsilon, m^\varepsilon)_x \qquad \text{ is pre-compact in $W^{-1,2}_{loc}(\mathbb{R}^2_+)$.}\tag*{\qed}
\eeqs
\end{proof}

\section{\, Convergence to Entropy Solutions}\label{sec:convergence}
Proposition \ref{compactness}, combined with the uniform estimates above,
implies that the sequence of approximate solutions
satisfies the compensated compactness framework in Chen-Perepelitsa \cite{ChenPerep1}.
With this framework, specifically the results
of \S 5 and \S 7 in \cite{ChenPerep1}, we conclude that
\begin{equation*}
 (\rho^\varepsilon,m^\varepsilon)\rightarrow(\rho,m) \qquad \text{{\it a.e.} $(t,x)\in\mathbb{R}^2_+$,}
\end{equation*}
and also in $L^p_{loc}(\mathbb{R}^2_+)\times L^q_{loc}(\mathbb{R}^2_+)$, for $p\in[1,\gamma+1)$
and $q\in[1,\frac{3(\gamma+1)}{\gamma+3})$.
This restriction may be seen as $|m|^q=\rho^{\frac{q}{3}}|u|^q\rho^{\frac{2q}{3}}\leq \rho|u|^3+\rho^{\gamma+1}$,
precisely when $q=\frac{3(\gamma+1)}{\gamma+3}$.

The uniform estimates also give us the convergence of the mechanical energy as $\varepsilon\rightarrow0$:
$$
\eta^*(\rho^\varepsilon,m^\varepsilon)\rightarrow \eta^*(\rho,m) \qquad \text{ in $L^1_{loc}(\mathbb{R}^2_+)$.}
$$
Moreover, from \eqref{eta*}, we have
\begin{equation*}
 \int_{t_1}^{t_2}\int_\mathbb{R}\eta^*(\rho,m)(t,x)A(x)\,dx\,dt\leq M(t_2-t_1)\int_\mathbb{R}\eta^*(\rho_0,m_0)(x)A(x)\,dx+M,
\end{equation*}
so that, for a.e. $t\geq 0$,
\begin{equation*}
 \int_\mathbb{R}\eta^*(\rho,m)(t,x)A(x)\,dx\leq M\int_\mathbb{R}\eta^*(\rho_0,m_0)(x)A(x)\,dx+M.
 \end{equation*}
This implies that no concentration of the density $\rho$ is formed at any point in finite time.

We also see that, for any convex $\psi(s)$ with sub-quadratic growth, the uniform energy estimates of
Lemmas \ref{gamma+1}--\ref{entropy2} and the main energy estimate, Proposition \ref{energy},
give that the sequences:
\begin{equation*}
 \eta^\psi(\rho^\varepsilon, m^\varepsilon),\hspace{2mm} q^\psi(\rho^\varepsilon, m^\varepsilon),\hspace{2mm} m^\varepsilon\eta^\psi_\rho(\rho^\varepsilon, m^\varepsilon)
\end{equation*}
are equi-integrable. Thus, given such a function $\psi$,
we multiply \eqref{div(eta,q)} by $A(x)$ and integrate against a smooth,
compactly supported test function in $\mathbb{R}^2_+$,
and then pass to the limit to obtain the entropy inequality
\eqref{entropyinequality}.

It remains only to check the limit in the equations in the sense of distributions.
This is straightforward
by multiplying the equations of $\eqref{approx}$ by a test function and integrating by parts before
passing to the limit and using the uniform bounds above.
Note that, in the momentum equation, the term
$\delta \rho^2$ vanishes in the limit as $\delta=\delta(\varepsilon)\rightarrow 0$ as $\varepsilon\rightarrow 0$.
This completes the proof of the main theorem, Theorem \ref{thm:main}.

\section{\, Transonic Nozzles with General Cross-Sectional Areas}\label{sec:general}
As mentioned in the introduction,
the approach laid out above applies to much more general situations,
for example an infinitely narrowing nozzle  with closed ends or an expanding nozzle with unbounded ends.
In this section, we remove the additional assumptions placed on $A(x)$
in \S \ref{sec:artificial}--\S \ref{sec:convergence}
to conclude the proof of Theorem \ref{thm:main} for transonic nozzles with general cross-sectional area function
$A(x)$ satisfying \eqref{1.3a} or \eqref{1.3b}.

More precisely, we now relax the lower bound on $A$ to $A(x)>0$
and allow $A'$ to be only in $L^1(-\infty, 0)$ or $L^1(0,\infty)$,
especially allowing that the cross-sectional area function may converge to $0$ as $x\rightarrow\pm\infty$
and to $\infty$ as $x\to -\infty$ or $\infty$.
For ease of reference, we restate the main theorem, Theorem \ref{thm:main}, as follows:

\begin{theorem}\label{thm:5.1}
Assume that $(\rho_0,m_0)\in (L^1_{loc}(\mathbb{R}))^2$ with $\rho_0\geq0$
is of relative finite-energy and that the cross-sectional area function $A\in C^2$ satisfies \eqref{1.3a} or \eqref{1.3b}.
Then there exists a sequence of approximate solutions $(\rho^\varepsilon,m^\varepsilon)(t,x)$
solving \eqref{approx}--\eqref{BCs} for some initial data $(\rho_0^\varepsilon,m_0^\varepsilon)$ such that
$(\rho_0^\varepsilon,m_0^\varepsilon)\rightarrow(\rho_0,m_0)$ a.e. $x\in\mathbb{R}$ as $\varepsilon\rightarrow 0$,
with $(\rho_0^\varepsilon,m_0^\varepsilon)$ taken to be the zero extension of $(\rho_0^\varepsilon,m_0^\varepsilon)$
outside $(a,b)$,
and $(\rho^\varepsilon,m^\varepsilon)\rightarrow(\rho,m)$ for a.e. $(t,x)\in\mathbb{R}^2_+:=\mathbb{R}_+\times\mathbb{R}$
and in $L^p_{loc}(\mathbb{R}^2_+)\times L^q_{loc}(\mathbb{R}^2_+)$ for $p\in[1,\gamma+1)$ and $q\in[1,\frac{3(\gamma+1)}{\gamma+3})$,
as $\varepsilon\rightarrow 0$, such that $(\rho,m)(t,x)$ is a global finite-energy entropy solution of the transonic nozzle problem
\eqref{transonic}--\eqref{prob:Cauchy} for end-states $(\rho_\pm,m_\pm)$.
\end{theorem}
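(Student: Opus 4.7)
The plan is to retain the viscosity approximation of Sections \ref{sec:artificial}--\ref{sec:convergence} and to re-examine every uniform-in-$\varepsilon$ estimate when the area function $A(x)$ satisfies only \eqref{1.3a} or \eqref{1.3b}. First, by the Galilean invariance of the Euler system, I would assume without loss of generality that $u_+ = 0$ when \eqref{1.3a} holds (respectively $u_- = 0$ when \eqref{1.3b} holds), so that the background momentum $\bar m$ vanishes identically on the half-line where $A'$ need not be in $L^1$. For each $\varepsilon > 0$ I would then construct smooth, strictly positive, compatible approximate initial data $(\rho_0^\varepsilon, m_0^\varepsilon)$ on $(a(\varepsilon), b(\varepsilon))$ converging a.e.\ on $\mathbb{R}$ to $(\rho_0, m_0)$, matching the Dirichlet data $(\rho_\pm^\varepsilon, m_\pm^\varepsilon) \to (\rho_\pm, m_\pm)$, and with $\int_a^b \overline{\eta_\delta^*}(\rho_0^\varepsilon, m_0^\varepsilon) A\, dx \to E[\rho_0, m_0]$. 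Since $A$ is continuous and strictly positive on the bounded interval $(a, b)$, conditions \eqref{1.4}--\eqref{1.5} hold there automatically (with constants depending on $\varepsilon$), so Theorem \ref{artsol} yields a global smooth solution $(\rho^\varepsilon, m^\varepsilon)$ for each $\varepsilon > 0$.

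The crucial step is to show that the estimates of Section \ref{sec:uniform} retain constants independent of $\varepsilon$ under only \eqref{1.3a} or \eqref{1.3b}. The key structural observation is that $\|A'/A\|_{L^\infty(\mathbb{R})}$, which is uniformly bounded by hypothesis, controls every geometric source term in \eqref{approx}. In the proof of Proposition \ref{energy}, the delicate contribution $p_\delta(\rho)(\eta_\delta^*)_m(\bar \rho, \bar m) A'(x) = -(\bar m/\bar \rho)\, p_\delta(\rho) A'(x)$ is supported on the half-line where $\bar m \neq 0$, which is precisely the half-line on which $\|A'\|_{L^1}$ is finite. The same bookkeeping applies throughout Lemmas \ref{gamma+1}--\ref{unif2}: each term involving $A'$ either multiplies a quantity vanishing on the ``bad'' half-line (through the asymmetric choice of $\bar m$ or of the state $(\rho_-, m_-)$ used in Lemma \ref{entropy2}), or is localized to a compact set $K$ on which $A$, $A'$, and $A''$ are uniformly bounded regardless of $\varepsilon$.

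The principal obstacle is the simultaneous consistency of the truncation parameters $a(\varepsilon), b(\varepsilon)$, the pressure regularizer $\delta(\varepsilon)$, and the initial-data approximation. They must satisfy the rate compatibility \eqref{3.6}, the conditions (i)--(iv) preceding Proposition \ref{compactness} (notably $\delta |a|^\beta/\varepsilon \leq M$ and $\varepsilon|a - b|\leq M$), and the convergence of the relative initial energy with weight $A(x)$ that may decay to $0$ or blow up at infinity. I would take $|a(\varepsilon)|, |b(\varepsilon)| \sim \varepsilon^{-\alpha}$ with $\alpha > 0$ chosen small and $\delta(\varepsilon) \sim \varepsilon^{1 + \alpha \beta}$ (with $\beta$ the exponent of Lemma \ref{rhocubed}) so that both small-parameter constraints are met; the initial-data convergence is then handled by truncating $(\rho_0, m_0)$ on $[-R, R]$, smoothly interpolating to $(\rho_\pm, m_\pm)$ outside, mollifying, and diagonalizing $R \to \infty$ against $\varepsilon \to 0$ via dominated convergence in the weighted space $L^1(\mathbb{R};\, \overline{\eta^*}(\rho_0, m_0) A \, dx)$.

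Once these uniform estimates are secured, the compensated compactness argument of Section \ref{sec:convergence} applies essentially verbatim: the entropy dissipation measures are pre-compact in $W^{-1,2}_{loc}(\mathbb{R}_+^2)$, yielding a.e.\ convergence $(\rho^\varepsilon, m^\varepsilon) \to (\rho, m)$ and strong convergence in $L^p_{loc} \times L^q_{loc}$ in the indicated ranges, while the distributional form of \eqref{transonic} and the entropy inequality \eqref{entropyinequality} follow by passage to the limit in the test-function formulation, with the $\delta \rho^2$ term in the momentum equation vanishing since $\delta(\varepsilon) \to 0$.
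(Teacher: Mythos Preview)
Your overall strategy matches the paper's: construct approximate solutions via Theorem \ref{artsol} on finite intervals (where $A$ is automatically bounded above and below), then re-verify every uniform estimate under only \eqref{1.3a} or \eqref{1.3b}, exploiting the half-line structure to handle terms containing $A'$. Your identification of the key mechanism---that the dangerous contribution $p_\delta(\rho)(\eta_\delta^*)_m(\bar\rho,\bar m)A'(x)$ lives on the half-line where $\|A'\|_{L^1}$ is finite---is exactly what the paper uses.

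There is, however, a genuine gap in your treatment of the truncation parameters. You invoke only the old constraints \eqref{3.6} and (i)--(iv) preceding Proposition \ref{compactness}, but those were derived assuming the global bounds \eqref{1.4}--\eqref{1.5}. Once those are dropped, additional factors of $\|A\|_{L^\infty(a,b)}$, $\|A^{-1}\|_{L^\infty(a,b)}$, $\|(A'/A)'\|_{L^\infty(a,b)}$, and $\|A''\|_{L^\infty(a,b)}$ infiltrate the estimates---most visibly in Lemma \ref{rhocubed}, where the Young-inequality splitting introduces negative powers of $A$, and in the energy identity, where the viscosity correction produces the term $\varepsilon(A'/A)'m(\eta_\delta^*)_m$. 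The paper accordingly replaces your two constraints by a list of six, for instance
\[
\varepsilon\big\|\big(\tfrac{A'}{A}\big)'\big\|_{L^\infty(a,b)}\|A\|_{L^\infty(a,b)}|b-a|\le M,
\qquad
\frac{\delta}{\varepsilon}\|A\|_{L^\infty(a,b)}|a|^{\beta}\big\|A^{\frac{\gamma-3}{\gamma-1}}\big\|_{L^\infty(a,b)}\le M.
\]
Since $|A'/A|\le M$ permits $A$ to grow or decay exponentially in $|x|$, these norms on $(a,b)$ may behave like $e^{M|a|}$, and your polynomial scaling $|a|\sim\varepsilon^{-\alpha}$, $\delta\sim\varepsilon^{1+\alpha\beta}$ will not in general satisfy them; one must instead let $|a|,|b|$ diverge slowly enough (logarithmically in $\varepsilon$ in the worst case) that all six conditions hold simultaneously. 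With this correction in place, the rest of your argument goes through as in the paper.
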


First we recall that, without loss of generality, we may assume $u_+=0$ by the Galilean invariance of the original multidimensional Euler equations.

As we continue to impose the Dirichlet boundary conditions for the approximate problems \eqref{approx},
the existence of the approximate solutions is obtained as before (observe that once we restrict attention to the interval $(a,b)$, we recover
$0<A_0(\varepsilon)\leq A(x)\leq A_1(\eps)<\infty$).
However, we then need to demonstrate more carefully the uniform energy estimates of Proposition \ref{energy} and of \S \ref{sec:uniform}
that enable us to employ the compensated compactness framework in \cite{ChenPerep1} to pass to the vanishing viscosity limit and obtain an entropy solution of the transonic
nozzle problem.
In order to make these estimates uniform with respect to $\varepsilon$, we choose $\delta(\varepsilon)$, $a(\varepsilon)$, and $b(\varepsilon)$
such that the following inequalities hold for a constant $M$ independent of $\varepsilon$:

  \begin{itemize}
   \item[\textbullet] $\,\, \varepsilon|b-a|\leq M$,

   \smallskip
   \item[\textbullet] $\,\, \varepsilon\|\big(\frac{A'}{A}\big)'\|_{L^\infty(a,b)}\|A\|_{L^\infty(a,b)}|b-a|\leq M$,

   \smallskip
   \item[\textbullet] $\,\, \varepsilon\|A''\|_{L^\infty(a,b)}\leq M$,

   \smallskip
   \item[\textbullet] $\,\, \delta\varepsilon^{-1}\|A\|_{L^\infty(a,b)}|a|^{\beta(\gamma)}\big\|A^{\frac{\gamma-3}{\gamma-1}}\big\|_{L^\infty(a,b)}\leq M$,

   \smallskip
   \item[\textbullet] $\,\, \delta\varepsilon^{-1}\|A\|_{L^\infty(a,b)}|a|\leq M$,

   \smallskip
   \item[\textbullet] $\,\, \delta\|A\|_{L^\infty(a,b)}|a|^2\big\|A^{-\frac{4}{2\gamma-4}}\big\|_{L^\infty(a,b)}\leq M$,
  \end{itemize}
where $\beta(\gamma)$ is determined in Lemma 4.3.

Such choices of the parameters can be achieved as long as the cross-sectional area function $A\in C^2$ satisfies \eqref{1.3a} or \eqref{1.3b}.

\subsection{\, Uniform Estimates}
 As mentioned, to establish Theorem \ref{thm:5.1} for more general nozzles, the first step is to obtain the uniform energy estimate,
 which is the subject of the following proposition.

 \begin{proposition}\label{lemma:energy-main-general}
 Let
 $$
 E_0:=\sup_{\varepsilon>0}\int_a^b\overline{\eta_\delta^*}(\rho_0^\varepsilon(x),m_0^\varepsilon(x))A(x)\,dx<\infty.
 $$
 Then there exists $M>0$, independent of $\varepsilon>0$, such that, for all $\varepsilon>0$,
 \beqa\label{ineq:energy-general}
 &\sup_{t\in[0,T]}\int_a^b\big(\tfrac{1}{2}\rho |u-\bar{u}|^2 + \overline{h_\delta}(\rho,\bar{\rho})\big)A(x)\,dx\\
 &\,\,\,\,
 +\varepsilon\int_{Q_T}\Big(h_\delta''(\rho)\rho_x^2
   +\rho u_x^2+\big|\big(\frac{A'(x)}{A(x)}\big)'\rho u(u-\bar{u})\big|\Big)A(x)\,dx\,dt\\
 &\,\,\leq M(E_0+1).
 \eeqa
 \end{proposition}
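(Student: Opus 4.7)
The plan is to revisit the proof of Proposition~\ref{energy} and verify that every error term in the energy identity can still be absorbed into $M(E+1)$, even though the cross-sectional area function $A$ is no longer uniformly bounded above or below independently of $\varepsilon$. The mechanical scheme is unchanged: multiply the two equations of \eqref{approx} by $(\overline{\eta_\delta^*})_\rho A(x)$ and $(\overline{\eta_\delta^*})_m A(x)$ respectively, add, integrate in $x\in(a,b)$, and evaluate boundary contributions using the Dirichlet data. What needs redoing is the bookkeeping of $A$-dependent constants in each error term, and the point is to check that every appearance of an a priori unbounded quantity like $\|A\|_{L^\infty(a,b)}$, $\|A''\|_{L^\infty(a,b)}$, or $\|(A'/A)'\|_{L^\infty(a,b)}$ is compensated by a factor of $\varepsilon$ or $\delta$ controlled by the bulleted parameter restrictions.

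Concretely, I would first note that $\|A'/A\|_{L^\infty(\mathbb{R})}$ is bounded by hypothesis in both \eqref{1.3a} and \eqref{1.3b}, so the geometric-source contribution $\int_a^b A'[\,\cdots]\,dx$ is controlled by $M(E+1)$ exactly as in Proposition~\ref{energy}. Next, the reference-profile correction $\varepsilon\int \bigl(\rho_x\bigl((\eta^*_\delta)_\rho(\bar\rho,\bar m)\bigr)_x + m_x\bigl((\eta^*_\delta)_m(\bar\rho,\bar m)\bigr)_x\bigr)A\,dx$ localizes to $[-L_0,L_0]$ because $(\bar\rho,\bar m)$ is constant outside, so only a compact interval matters and the estimate survives unchanged up to a factor $\|A\|_{L^\infty(-L_0,L_0)}$ that is independent of $\varepsilon$. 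The only genuinely new difficulty is the viscous cross-term $\varepsilon\int A(A'/A)'\,m\bigl((\eta_\delta^*)_m - (\eta_\delta^*)_m(\bar\rho,\bar m)\bigr)\,dx$, whose "dangerous" part is bounded by $\varepsilon M\,\|A\|_{L^\infty(a,b)}\|(A'/A)'\|_{L^\infty(a,b)}|b-a|$; this is exactly the quantity forced to be $\le M$ by the second bulleted condition.

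Once these three terms are absorbed into $M(E+1)$, the argument proceeds verbatim to yield
\begin{equation*}
\frac{dE}{dt} + \varepsilon\int_a^b \bigl(h_\delta''(\rho)\rho_x^2 + \rho u_x^2\bigr) A(x)\,dx \le M(E+1),
\end{equation*}
and Gronwall's inequality then produces the first two summands of \eqref{ineq:energy-general}. The third summand, involving $|(A'/A)'\rho u(u-\bar u)|$, is extracted by the same pointwise inequality $|(A'/A)'\rho u(u-\bar u)|\le\|(A'/A)'\|_{L^\infty}\overline{\eta_\delta^*} + \text{absorbable remainder}$ used in Proposition~\ref{energy}, so it carries over as soon as the leading energy bound is in hand.

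The main obstacle, and the only substantive checking required, is to confirm that no term in the original identity needs a bound on $A$ (or on its second derivative, or on $(A'/A)'$) that is not paired with a sufficiently small prefactor $\varepsilon$ or $\delta$. The bulleted relations on $(\varepsilon, \delta, a, b)$ have been tailored precisely to make each such product bounded by $M$; the remaining task is the routine but careful verification that every $A$-dependent factor appearing in the proof of Proposition~\ref{energy} falls into one of the categories (i) bounded by the a priori $L^\infty$ estimate on $A'/A$, (ii) localized to a fixed compact set by the support of $\bar\rho_x,\bar m_x$, or (iii) compensated by $\varepsilon$ or $\delta$ as in the bulleted list.
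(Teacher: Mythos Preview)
Your proposal is correct and follows essentially the same approach as the paper: you reproduce the energy identity from Proposition~\ref{energy}, classify the $A$-dependent error terms into the three categories (bounded by $\|A'/A\|_{L^\infty}$, localized to $[-L_0,L_0]$ by the support of $(\bar\rho_x,\bar m_x)$, or compensated by $\varepsilon$ via the bulleted parameter conditions), and then apply Gronwall. The paper's proof is exactly this bookkeeping, with the viscous cross-term $\varepsilon\int A\,(A'/A)'\,m\bigl((\eta_\delta^*)_m-(\eta_\delta^*)_m(\bar\rho,\bar m)\bigr)\,dx$ bounded by $\varepsilon M\|(A'/A)'\|_{L^\infty(a,b)}\bigl(E+\|A\|_{L^\infty(a,b)}|b-a|\bigr)\le M(E+1)$ using the second bulleted relation, just as you anticipated.
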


 \begin{proof}
 \, We prove estimate \eqref{ineq:energy-general} for the nozzles satisfying condition \eqref{1.3a}, since
the other case \eqref{1.3b} can be handled by corresponding similar arguments.

 Exactly as in the proof of Proposition \ref{energy},
 \begin{align*}
  &\frac{dE}{dt}-\frac{m}{\rho}p_\delta(\rho)\vert_a^b\\
  &\quad+\int_a^b \Big(m\big((\eta_\delta^*)_\rho(\bar{\rho},\bar{m})\big)_xA(x)
     +\big((\eta_\delta^*)_m(\bar{\rho},\bar{m})\big)_x\big(\frac{m^2}{\rho}+p_\delta(\rho)\big)A(x)\\
  &\qquad\qquad\, +(\eta_\delta^*)_m(\bar{\rho},\bar{m})A'(x)p_\delta(\rho) \\
  &\qquad\qquad\, +\varepsilon\big(\frac{A'(x)}{A(x)}\big)' m\big((\eta_\delta^*)_m-(\eta_\delta^*)_m(\bar{\rho},\bar{m})\big)\Big)\,dx\\
  &=-\varepsilon\int_a^b (\rho_x,m_x)\nabla^2\overline{\eta_\delta^*}(\rho_x,m_x)^\top A(x)\,dx\\
    &\quad+\varepsilon\int_a^b \big(\rho_x((\eta^*_\delta)_\rho(\bar{\rho},\bar{m}))_x+m_x((\eta^*_\delta)_m(\bar{\rho},\bar{m}))_x\big)A(x)\,dx.
 \end{align*}
 Observe now that
 \begin{align*}
  \varepsilon\int_a^b&\big(\rho_x((\eta^*_\delta)_\rho(\bar{\rho},\bar{m}))_x+m_x((\eta^*_\delta)_m(\bar{\rho},\bar{m}))_x\big) A(x)\, dx\\
  \leq& -\varepsilon\int_{-L_0}^{L_0}\big(\rho((\eta^*_\delta)_\rho(\bar{\rho},\bar{m}))_x+m((\eta^*_\delta)_m(\bar{\rho},\bar{m}))_x\big)A'(x)\,dx\\
  &-\varepsilon\int_{-L_0}^{L_0}\big(\rho((\eta^*_\delta)_\rho(\bar{\rho},\bar{m}))_{xx}+m((\eta^*_\delta)_m(\bar{\rho},\bar{m}))_{xx}\big)A(x)\,dx\\
  &+\varepsilon \big(\rho((\eta^*_\delta)_\rho(\bar{\rho},\bar{m}))_x+m((\eta^*_\delta)_m(\bar{\rho},\bar{m}))_x\big)A|^b_a\\
  \leq&\,\varepsilon M(\bar{\rho},\bar{m})\big(E+\|A'\|_{L^1(-L_0,L_0)}+\|A\|_{L^\infty(-L_0,L_0)}L_0\big)\\
  \leq&\, M(E+1),
 \end{align*}
 since $(\bar{\rho},\bar{m})$ are constant outside $(-L_0,L_0)$ so that both
 $\big((\eta^*_\delta)_\rho(\bar{\rho},\bar{m})\big)_x$ and $\big((\eta^*_\delta)_m(\bar{\rho},\bar{m})\big)_x$
 vanish outside this interval. We have also used that $|a(\varepsilon)|>L_0$ and $|b(\varepsilon)|>L_0$
 to see that the boundary
 term in the integration by parts vanishes. Similarly, we have
 \begin{align*}
 &\left|\int_a^bm\big(\big(\eta_\delta^*\big)_\rho(\bar{\rho},\bar{m})\big)_xA(x)\,dx\right|\leq M\|A\|_{L^\infty(-L_0,L_0)}L_0(E+1),\\
 &\left|\int_a^b\big(\big(\eta_\delta^*\big)_m(\bar{\rho},\bar{m})\big)_x\big(\frac{m^2}{\rho}+p_\delta(\rho)\big)A(x)\,dx\right|\leq M\|A\|_{L^\infty(-L_0,L_0)}L_0(E+1).
 \end{align*}
 Using the uniform bound $|\frac{A'}{A}|\leq M$, and $(\eta^*_\delta)_m(\bar{\rho},\bar{m})=0$ for $x>L_0$, we obtain
 \begin{align*}
  \left|\int_a^bp_\delta(\rho)(\eta^*_\delta)_m(\bar{\rho},\bar{m})A'(x)\, dx\right|
  \leq&\,\int_a^{L_0}\big(MA(x)\overline{h_\delta}(\rho,\bar{\rho})+|A'(x)|\big)\,dx\\
  \leq&\, ME+\|A'\|_{L^1(-\infty,L_0)}.
 \end{align*}
 Finally, we make the estimate:
 \begin{align*}
  \int_a^b&\varepsilon\big|\big(\frac{A'(x)}{A(x)}\big)'m\big((\eta_\delta^*)_m-(\eta_\delta^*)_m(\bar{\rho},\bar{m})\big)\big|\,dx\\
  \leq&\,\varepsilon M \|\big(\frac{A'}{A}\big)'\|_{L^\infty(a,b)}\int_a^b\big(\rho(u-\bar{u})^2+\rho\bar{u}^2\big)A(x)\,dx\\
  \leq&\,\varepsilon M\|\big(\frac{A'}{A}\big)'\|_{L^\infty(a,b)}\big(E+\|A\|_{L^\infty(a,b)}|b-a|\big)\\
  \leq&\, M(E+1).
 \end{align*}
With these estimates, we use the Gronwall inequality to obtain the result as before.
 \qed\end{proof}

We must now consider the key estimates of
Lemma \ref{unif2}.
Before that,
we first consider Lemma \ref{rhocubed}.
In the first line of the proof, we make the initial estimate:
\begin{align*}
  \varepsilon&\int_0^T\int_a^x\rho^3A(y)\,dy\,dt\leq \varepsilon |a| M \|A\|_{L^\infty(a,b)}\int_0^T\sup_{(a,x)}\rho^{3-\gamma}\,dt.
 \end{align*}
Continuing with the proof as before, we find that, by assumption,
$$
\varepsilon\int_0^T\int_a^x\rho^3\,dy\,dt
\leq M\varepsilon\|A\|_{L^\infty(a,b)}\big(|a|^{\beta(\gamma)}\|A^{\frac{\gamma-3}{\gamma-1}}\|_{L^\infty(a,b)}+1\big)
\leq M\frac{\varepsilon}{\delta}.
$$
The other case, $\gamma\in[2,3]$, requires a similar modification, leading to
\begin{align*}
\varepsilon\int_0^T\int_a^x&\rho^3A\,dy\,dt
\leq M\|A\|_{L^\infty(a,b)}|a|\big(\varepsilon+1+\varepsilon|a|\|A^{-\frac{4}{2\gamma-4}}\|_{L^\infty(a,b)}\big)
\leq M\frac{\varepsilon}{\delta}.
\end{align*}
Finally, we examine the proof of Lemma \ref{unif2}.
Turning attention to estimates \eqref{ineq:integral1}--\eqref{ineq:integral3},
we see that they remain uniform, under the assumption that $\varepsilon\|A''\|_{L^\infty(a,b)}\leq M$ and
the uniform bound of Lemma \ref{rhocubed}.

We need to make the bounds on all of terms $I_j, j=1,\dots, 7$, uniform in $\varepsilon$ in order to conclude the expected result.
Note first that $I_1$ and $I_3$ need no adjustment.
We have seen that
$$
\int_K\omega|I_2|\,dx\leq M\varepsilon|a|\big\|A\big(\frac{A'}{A}\big)'\big\|_{L^\infty(a,b)}\leq M,
$$
by assumption.
Examining the estimate for $I_4$ shows that
\begin{align*}
|I_4|\leq&\frac{\delta}{\sqrt{\varepsilon}}M\Big(\int_0^T\int_a^x\rho^3(t,y)A(y)\,dy\,dt\Big)^{\frac{1}{2}}\leq M  \qquad \mbox{uniformly in $\varepsilon$}.
\end{align*}

The bounds for $I_5$ and $I_6$ are seen to be uniform,
due to the uniform bound of Lemma \ref{rhocubed} and $|\frac{A'}{A}|\leq M$.
Finally, we have
\begin{align*}
\int_K|I_7|\,dx=\big\|\frac{A'}{A}\big\|_{L^\infty}\int_0^T\int_K\omega(x)|\tilde{\eta}(t,x)|A(x)\,dx\,dt\leq M.
\end{align*}

\subsection{\, Compactness}
With the uniform estimates discussed above, the only thing remaining to prove
is Proposition \ref{compactness}.
Its proof is seen to follow as before by using the uniform bound $|\frac{A'}{A}|\leq M$
and by observing that, in each of terms $J_1, j=1, \dots, 6$, all the integrals are taken over the support of the test function $\omega$,
so that the factors of $A$, $A^{-1}$, {\it etc.},  may be introduced freely.
This concludes the proof of Theorem 6.1, hence also Theorem 2.1.

\section{\, Spherically Symmetric Solutions to the Multidimensional Euler Equations}

In this section, we explain how the approach and techniques developed in \S 3-- \S 6 above
can be generalized to construct global spherically symmetric solutions to
the multidimensional Euler equations for compressible fluid flows, as considered in Chen-Perepelitsa in \cite{ChenPerep2}.
This situation corresponds to the case that $A(x)=\omega_n x^{n-1}: \mathbb{R}_+\rightarrow\mathbb{R}_+$, where $\omega_n$ is
the surface area of the $n$-dimensional unit sphere.

In \cite{ChenPerep2},
the possibility of a blow-up of the density at the origin is allowed
by imposing a Neumann boundary condition
at the left boundary for the approximate solutions.
By the Galilean invariance, the velocity can be taken to vanish at $x=\infty$,
so the end-states $(\rho_+,u_+)=(\bar\rho,0)$ at $b(\eps)$ are imposed,
where $\bar\rho\rightarrow 0$ as $\varepsilon \rightarrow 0$.
Then the solutions of this approximate problem were constructed under the assumption that $\gamma\in(1,3]$,
while the convergence of the approximate solutions to the limit is shown
for all $\gamma>1$ in  \cite{ChenPerep2}.

The following theorem gives the extension of this result for all the physical interval $\ga>1$,
especially including the unsolved case $\gamma> 3$,
which leads to Theorem \ref{thm:sph-symm}.

\begin{theorem}\label{thm:sph-symm-redux}
Let $(\rho_0,m_0)\in (L^1_{loc}(\R_+))^2$ be finite-energy initial data such that $\rho_0\geq0$.
Then there exists a global finite-energy entropy solution of the spherically symmetric Euler equations{\rm :}
$$
\begin{cases}
\rho_t+ m_x+\tfrac{n-1}{x}m=0,& (t,x)\in\mathbb{R}_+\times\mathbb{R}_+,\\
m_t+ \big(\tfrac{m^2}{\rho}+p(\rho)\big)_x+\tfrac{n-1}{x}\tfrac{m^2}{\rho}=0,& (t,x)\in\mathbb{R}_+\times\mathbb{R}_+,\\
(\rho,m)|_{t=0}=(\rho_0,m_0),& x\in \R_+,
\end{cases}
$$
where $p(\rho)=\kappa\rho^\gamma$ with $\gamma>1$.
This global entropy solution is the vanishing viscosity limit of the approximate solutions of
the following system{\rm :}
\beqa\label{eq:sph-symm-approx}
\begin{cases}
\rho^\eps_t+ m^\eps_t+\tfrac{n-1}{x}m^\eps=\eps x^{-(n-1)}\big(x^{n-1}\rho^\eps_x\big)_x,& (t,x)\in Q^\eps,\\
m^\eps_t+\big(\tfrac{(m^\eps)^2}{\rho^\eps}+p_\de(\rho^\eps)\big)_x+\tfrac{n-1}{x}\tfrac{(m^\eps)^2}{\rho^\eps}=\eps\big(m^\eps_x+\tfrac{n-1}{x}m^\eps\big)_x,& (t,x)\in Q^\eps,\\
(\rho^\eps,m^\eps)|_{t=0}=(\rho_0^\eps,m_0^\eps),& x\in(a,b),\\
\end{cases}
\eeqa
where $Q^\eps=\R_+\times(a(\eps),b(\eps))$ and with appropriately chosen boundary conditions{\rm ;} that is, as $\varepsilon\to 0$,
$$
(\rho^\varepsilon,m^\varepsilon)\rightarrow(\rho,m) \qquad\,\mbox{for a.e.} \, (t,x)\in\mathbb{R}^2_+
$$
and in $L^p_{loc}(\mathbb{R}^2_+)\times L^q_{loc}(\mathbb{R}^2_+)$ for $p\in[1,\gamma+1)$ and $q\in[1,\frac{3(\gamma+1)}{\gamma+3})$.
Here $p_\de(\rho)=\kappa\rho^\gamma+\de\rho^2$, $a(\eps)\to0$ and $b(\eps)\to\infty$ as $\eps\to0$, and $(\rho_0^\varepsilon,m_0^\varepsilon)\rightarrow(\rho_0,m_0)$
a.e. $x\in\mathbb{R}$ as $\varepsilon\rightarrow 0$, where $(\rho_0^\varepsilon,m_0^\varepsilon)$ are taken to be the zero extension of $(\rho_0^\varepsilon,m_0^\varepsilon)$
 outside $(a(\eps),b(\eps))$.
\end{theorem}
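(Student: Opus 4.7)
The plan is to adapt the six-step program of Sections~\ref{sec:artificial}--\ref{sec:general} to the spherically symmetric area $A(x)=\om_n x^{n-1}$. The essential difficulty is that $\frac{A'}{A}=\frac{n-1}{x}$ is singular at $x=0$, so that neither \eqref{1.3a} nor \eqref{1.3b} is available; the left endpoint must approach the origin as $\eps\to 0$. First I would pose the approximate problem \eqref{eq:sph-symm-approx} on $(a(\eps),b(\eps))$ with Dirichlet data $(\rho_\pm^\eps,m_\pm^\eps)$ at both endpoints, with $\rho_\pm^\eps>0$, $\rho_+^\eps=\bar\rho(\eps)\to 0$, and $m_+^\eps\to 0$ as $\eps\to 0$, so that the data vanish at infinity in the limit and a fixed background profile $(\bar\rho,\bar m)$ captures the end-states. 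Because $a(\eps)>0$ for each fixed $\eps$, both $\frac{A'}{A}$ and $\big(\frac{A'}{A}\big)'$ are bounded on $(a(\eps),b(\eps))$, so the proof of Theorem~\ref{artsol} yields a unique classical global approximate solution for every $\ga\in(1,\infty)$; crucially, Lemma~\ref{lemma:max-principle} still supplies the $L^\infty$ control that makes the whole range of $\ga$ tractable, exactly as in the nozzle case.

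The second step is to calibrate $(a,b,\delta)=(a(\eps),b(\eps),\delta(\eps))$ so that the six bulleted inequalities at the start of Section~\ref{sec:general} all hold with a single constant $M$ independent of $\eps\in(0,1]$. For $A=\om_n x^{n-1}$ these translate into explicit polynomial inequalities, for instance $\eps a^{-2}b^{n-1}(b-a)\leq M$, $\delta\eps^{-1}b^{n-1}a\leq M$, and analogous bounds involving $\|A^{(\ga-3)/(\ga-1)}\|_{L^\infty(a,b)}$ and $\|A^{-4/(2\ga-4)}\|_{L^\infty(a,b)}$. I would resolve these by scaling Ans\"atze of the form $a(\eps)=\eps^{\alpha_1}$, $b(\eps)=\eps^{-\alpha_2}$, $\delta(\eps)=\eps^{\alpha_3}$ with positive exponents $\alpha_j=\alpha_j(\ga,n)$ chosen so that every constraint is satisfied simultaneously.

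With these parameters fixed, the uniform energy estimate of Proposition~\ref{lemma:energy-main-general} goes through essentially verbatim, since the background profile is constant outside a fixed compact interval that lies inside $(a(\eps),b(\eps))$ for all small $\eps$. The higher-integrability estimates survive as well: Lemma~\ref{rhocubed} yields uniform control of $\eps\int_0^T\int_a^x\rho^3 A\,dy\,dt$ once the explicit $A$ is substituted, and the crucial $\rho|u|^3+\rho^{\ga+\th}$ bound of Lemma~\ref{unif2} follows from the same truncated-entropy argument, with the non-coercive boundary contribution at $x=a(\eps)$ again discarded thanks to $\tilde q(\rho_-,m_-)<0$. The $H^{-1}$--compactness of the entropy dissipation (Proposition~\ref{compactness}) is unaffected, since each term there is estimated against a compactly supported test function on whose support $A$ and $A^{-1}$ are both bounded. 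The compensated-compactness framework of \cite{ChenPerep1} then delivers strong convergence of $(\rho^\eps,m^\eps)$ in $L^p_{loc}\times L^q_{loc}$ for the stated range, and a passage to the limit in the weak formulation produces a global entropy solution of the spherically symmetric Euler system.

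The hard part will be the simultaneous tuning of the exponents $\alpha_j(\ga,n)$ in the regime $\ga>3$, where the norms of $A$ appearing in Section~\ref{sec:general} mix both small-$a$ and large-$b$ singularities that compete directly against $\delta\eps^{-1}$; pinning down a decay rate of $\delta(\eps)$ that dominates every error term while still vanishing in the limit is where the technical effort concentrates. In addition, one must rule out concentration of mass or momentum at the origin in the limit, which requires combining the dissipation inequality of Proposition~\ref{lemma:energy-main-general} with the observation that $A(a(\eps))\to 0$ so that the boundary contribution at $x=a(\eps)$ vanishes as $\eps\to 0$; this is what ultimately yields the sharp energy bound $E_*[\rho,m](t)\leq E_*[\rho_0,m_0]$ via the weak lower semicontinuity of $\eta^*$.
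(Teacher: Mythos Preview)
Your overall strategy—impose Dirichlet data at both endpoints and use the maximum-principle Lemma~\ref{lemma:max-principle} to cover all $\gamma>1$—matches the paper's first approach in \S\ref{subsec:Dirichlet}. However, there is a genuine gap in your plan to transplant the \S\ref{sec:general} machinery wholesale to $A(x)=\omega_n x^{n-1}$.

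The six bulleted calibration constraints at the start of \S\ref{sec:general} are imposed \emph{in addition to} hypothesis \eqref{1.3a} or \eqref{1.3b}, not in place of it. In particular, the proofs of Proposition~\ref{lemma:energy-main-general} and of the \S\ref{sec:general} refinement of Lemma~\ref{unif2} repeatedly invoke the uniform bound $\big|\frac{A'}{A}\big|\le M$ (for instance in bounding $\int p_\delta(\rho)(\eta_\delta^*)_m(\bar\rho,\bar m)A'\,dx$ and in the estimate of $I_5$). For the spherical area, $\frac{A'}{A}=\frac{n-1}{x}$ is unbounded on $(a(\eps),b(\eps))$ uniformly in $\eps$ once $a(\eps)\to 0$, and no choice of the exponents $\alpha_j$ can repair this because it is not one of the six bulleted quantities. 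So the claim that Proposition~\ref{lemma:energy-main-general} ``goes through essentially verbatim'' fails as stated.

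The paper avoids both obstructions. First, it takes the \emph{same} Dirichlet data $(\bar\rho,0)$ at the two endpoints, so the reference profile is the constant $(\bar\rho,0)$; then $(\eta_\delta^*)_m(\bar\rho,0)\equiv 0$ and all the cross terms in \eqref{eq:star} vanish, producing an \emph{exact} energy identity with the extra good dissipation $\eps(n-1)x^{-2}\rho u^2\,x^{n-1}$—no Gronwall, no calibration. Second, for the higher integrability $\rho|u|^3+\rho^{\gamma+\theta}$, the paper does not redo Lemma~\ref{unif2} by integrating from $a$ to $x$ toward the singular origin. Instead it cites \cite{ChenPerep2}, where the analogous estimate is obtained by integrating from $x$ to $b$: on $(x,b)$ with $x\in K$ compact one has $\frac{n-1}{y}\le \frac{n-1}{\inf K}$, so the singularity at the origin is never seen, and the only surviving scaling constraint is $\bar\rho^{\gamma}b^{n}+\frac{\delta}{\eps}b^{n}\le M$. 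Your proposed integration from $a$ to $x$, with the boundary term discarded via $\tilde q(\rho_-,m_-)<0$, forces you to control integrals over $(a(\eps),x)$ where $\frac{A'}{A}$ blows up; this is the wrong direction for the spherical geometry.
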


To overcome the obstacle encountered in \cite{ChenPerep2} for the unsolved case $\gamma\ge 3$,
we develop two approaches here, thereby providing two methods for re-solving the problem
for the spherically symmetric solutions
for the whole range $\gamma\in (1, \infty)$.
In \S \ref{subsec:Dirichlet}, instead of taking the Neumann data for $\rho$ at $a(\eps)$ as in  \cite{ChenPerep2},
we choose the Dirichlet data at both ends here.
One of the motivations is the fact that
the boundary data are allowed not to be preserved in the limit (as boundary layers)
by choosing $(\rho,u)|_{x=a}=(\bar\rho,0)$ for the same $\bar\rho$
as at the other end-point in the construction of the approximate solutions.
This allows us to use the above construction from \S \ref{sec:artificial}
(especially the maximum principle for the approximate equations) to demonstrate
the existence of the approximate solutions.

In \S \ref{subsec:Neumann}, we consider the same problem but with the Neumann boundary data
initiated in \cite{ChenPerep2}. We are able to demonstrate how the higher order \it a priori \rm energy estimates
 may be obtained for the approximate solutions even in this situation to
 conclude the existence of the approximate solutions for all $\ga>1$, especially including the unsolved case $\gamma>3$,
 concluding the scheme set out in \cite{ChenPerep2}.

\subsection{\, Dirichlet Boundary Conditions for the Density}\label{subsec:Dirichlet}
We first demonstrate how the imposition of the Dirichlet boundary conditions
for the approximate system \eqref{eq:sph-symm-approx} may be used to obtain
the existence of globally defined, spherically symmetric entropy solutions of  the compressible Euler equations.
We therefore assign the Dirichlet boundary conditions to system \eqref{eq:sph-symm-approx}:
\beq\label{eq:sph-symm-dirichlet-bc}
(\rho,m)|_{x=a}=(\bar\rho,0),\,\qquad (\rho,m)|_{x=b}=(\bar\rho,0),
\eeq
where $\bar\rho=\bar{\rho}(\eps)\to0$ as $\eps\to0$.

Once again, the main point to check is that the key energy estimate holds
for the approximate solutions.
Indeed, the imposition of the Dirichlet data (rather than the Neumann data) enables us
to apply the framework in \S \ref{sec:artificial} directly to construct the approximate
solutions.
Moreover, by choosing the same boundary data at the two end-points,
we may take the monotone reference functions $(\bar\rho(x),\bar u(x))$ to be the constant state $(\bar\rho,0)$.
Then the relative mechanical entropy becomes
$$
\overline{\eta_\de^*}(\rho,m)=\eta_\de^*(\rho,m)-\eta_\de^*(\bar\rho,0)-(\eta_\de^*)_\rho(\bar\rho,0)(\rho-\bar\rho).
$$

\begin{proposition}
Let $$E_0:=\sup_{\varepsilon>0}\int_a^b\overline{\eta_\de^*}(\rho_0^\varepsilon(x),m_0^\varepsilon(x))A(x)\,dx<\infty.$$
Then there exists $M>0$, independent of $\varepsilon$, such that, for all $\varepsilon>0$,
 \begin{equation}\label{ineq:energy-spherical}
  \begin{split}
   &\sup_{t\in[0,T]}\int_a^b\big(\tfrac{1}{2}\rho u^2 + \overline{h_\delta}(\rho,\bar\rho)\big)x^{n-1}\,dx\\
   &+\varepsilon\int_{Q_T}\big(h_\delta''(\rho)\rho_x^2+\rho u_x^2+(n-1)\frac{\rho u^2}{x^2}\big)x^{n-1}\,dx\,dt\leq ME_0.
  \end{split}
 \end{equation}
\end{proposition}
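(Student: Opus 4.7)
The plan is to imitate the proof of Proposition \ref{energy} from \S \ref{sec:artificial}, but to exploit two special features of the present setup which together make the estimate immediate --- in fact, nearly conservative --- without needing Gronwall against an unbounded coefficient $A'/A$. First I would multiply the continuity equation in \eqref{eq:sph-symm-approx} by $(\overline{\eta_\delta^*})_\rho A$ and the momentum equation by $(\overline{\eta_\delta^*})_m A$, and then add them to obtain the analogue of equation \eqref{eq:star}, now with $A(x)=x^{n-1}$ and with the reference state $(\bar\rho,\bar m)=(\bar\rho,0)$ constant in $x$. Integrating in $x$ over $(a,b)$ leaves the identity $\frac{dE}{dt}=\text{(geometric/reference terms)}+\varepsilon\,\text{(viscosity terms)}$, and the goal is to show that each of these contributions either vanishes identically or combines into a non-negative dissipation.

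The two simplifications are as follows. Since $\bar u=0$, we have $(\eta_\delta^*)_m(\bar\rho,0)=\bar u=0$ and $(\eta_\delta^*)_\rho(\bar\rho,0)$ is a constant, so every term in the analogue of \eqref{eq:star} whose integrand contains $\big((\eta_\delta^*)_\rho(\bar\rho,\bar m)\big)_x$, $\big((\eta_\delta^*)_m(\bar\rho,\bar m)\big)_x$, or $(\eta_\delta^*)_m(\bar\rho,\bar m) p_\delta(\rho) A'$ vanishes identically. Likewise the cross viscosity-reference term
\[
\varepsilon\int_a^b\!\!\bigl(\rho_x((\eta_\delta^*)_\rho(\bar\rho,\bar m))_x+m_x((\eta_\delta^*)_m(\bar\rho,\bar m))_x\bigr)A\,dx
\]
is identically zero. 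Next, the Dirichlet data $(\rho,m)|_{x=a}=(\rho,m)|_{x=b}=(\bar\rho,0)$ force $m\equiv0$ and $\rho\equiv\bar\rho$ at the two endpoints, so the boundary contribution $\frac{m}{\rho}p_\delta(\rho)\big|_a^b$ vanishes and, after integration by parts in the viscosity terms, each boundary contribution of the form $\bigl[\varepsilon A\rho_x(\overline{\eta_\delta^*})_\rho\bigr]_a^b$ and $\bigl[\varepsilon A u(m_x+\tfrac{A'}{A}m)\bigr]_a^b$ vanishes as well, since $(\overline{\eta_\delta^*})_\rho$ and $u$ are both zero at $x=a$ and $x=b$ by construction.

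The crucial point, which is where the spherical structure pays off, is the one remaining term from Proposition \ref{energy}, namely
\[
-\varepsilon\int_a^b\Bigl(\tfrac{A'(x)}{A(x)}\Bigr)'\,m\bigl((\eta_\delta^*)_m-(\eta_\delta^*)_m(\bar\rho,\bar m)\bigr)A(x)\,dx
=-\varepsilon\int_a^b\Bigl(\tfrac{A'}{A}\Bigr)'\rho u^2\,A(x)\,dx.
\]
In the general nozzle case this had to be absorbed by $\|(A'/A)'\|_{L^\infty}$, but in the spherical setting $(A'/A)'=\bigl((n-1)/x\bigr)'=-(n-1)/x^2\le 0$, so this term is \emph{non-positive} and, when moved to the left-hand side, delivers exactly the extra dissipation $\varepsilon(n-1)\int \frac{\rho u^2}{x^2}\,x^{n-1}\,dx$ claimed in \eqref{ineq:energy-spherical}. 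Combined with the Hessian bound $(\rho_x,m_x)\nabla^2\overline{\eta_\delta^*}(\rho_x,m_x)^\top\geq h_\delta''(\rho)\rho_x^2+\rho u_x^2$ recalled from \cite{ChenPerep2}, this gives $\frac{dE}{dt}+\varepsilon\!\int(h_\delta''\rho_x^2+\rho u_x^2+(n-1)\rho u^2/x^2)A\,dx\leq 0$, and integrating in $t$ yields \eqref{ineq:energy-spherical} with $M=1$ (the freedom $M>1$ in the statement simply accommodates the $\delta\rho^2$ perturbation in $h_\delta$).

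The main thing to be careful about is verifying that all boundary integrals after integration by parts vanish; this relies on the fact that the Dirichlet data are the \emph{same} constant $(\bar\rho,0)$ at both $x=a$ and $x=b$, which is precisely why this choice was made in \S \ref{subsec:Dirichlet} in place of the Neumann condition used in \cite{ChenPerep2}. A secondary check is that the singular factor $A'/A=(n-1)/x$ inside the flux and viscosity manipulations does not create any unintegrable terms after the pointwise cancellations above --- since after all cancellations the only remaining $x$-dependence is through $A=x^{n-1}$ and the positive combination $\rho u^2/x^2\cdot x^{n-1}$, no further estimate on $A'/A$ is needed.
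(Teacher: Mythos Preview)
Your proposal is correct and follows essentially the same route as the paper's proof: start from the identity \eqref{eq:star}, use that the constant reference state $(\bar\rho,0)$ kills all the $x$-derivative-of-reference terms and the $(\eta_\delta^*)_m(\bar\rho,0)p_\delta(\rho)A'$ term, use the matching Dirichlet data at both endpoints to eliminate every boundary contribution, and observe that the remaining geometric term $-\varepsilon(A'/A)'\,m(\eta_\delta^*)_m\,A=\varepsilon(n-1)x^{-2}\rho u^2\,x^{n-1}$ has the correct sign to appear as additional dissipation. The paper compresses all of this into a two-line simplified identity and obtains the energy equality with $M=1$, exactly as you note.
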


\begin{proof}
\, With the above choice of $A(x)=\omega_n x^{n-1}$, equation \eqref{eq:star} is simplified as
\beqas
(\overline{\eta_\de^*}x^{n-1})_t+\big((q_\de^*-(\eta_\de^*)_\rho(\bar\rho,0)\big)_x+\eps(n-1)m(\eta_\de^*)_mx^{n-3}\nonumber  \\[1mm]
=\eps(\rho_xx^{n-1})_x\big((\eta_\de^*)_\rho-(\eta_\de^*)_\rho(\bar\rho,0)\big)+\eps(m_xx^{n-1})_x(\eta_\de^*)_m. \nonumber
\eeqas
Integrating this over the parabolic cylinder $Q_T$ and then integrating by parts with the new Dirichlet boundary conditions implies
\beqas
\int_a^b\overline{\eta_\de^*}x^{n-1}\,dx+\eps\int_{Q_T}\big((\rho_x,m_x)\nabla^2\overline{\eta_\de^*}(\rho_x,m_x)^\top+\frac{n-1}{x^2}\frac{m^2}{\rho}\big)x^{n-1}\,dx\,dt=E_0, \nonumber
\eeqas
as desired. Then we conclude as in Proposition \ref{energy} by the convexity of $\overline{\eta_\de^*}$.
\qed\end{proof}

With this estimate, we are now in the situation in \S \ref{sec:artificial} of this paper.
We can therefore proceed with the maximum principle estimate of Lemma \ref{lemma:max-principle}
to obtain {\it a priori} upper bounds on the density and velocity of the fluid,
and continue to deduce the existence of solutions $(\rho^\eps,m^\eps)$ of the initial-boundary value problem
\eqref{eq:sph-symm-approx}--\eqref{eq:sph-symm-dirichlet-bc},
exactly as in Theorem \ref{artsol}:

\begin{lemma}\label{lemma:sph-symm-approx}
For $\varepsilon>0$, let $(\rho_0^\varepsilon,m_0^\varepsilon)\in\big(C^{2+\beta}([a,b])\big)^2$ be a sequence
of functions such that
  \begin{enumerate}
   \item[\rm (i)] $\inf_{a\leq x\leq b}\rho_0^\varepsilon(x)>0${\rm ;}

   \smallskip
   \item[\rm (ii)] $(\rho_0^\varepsilon,m_0^\varepsilon)$ satisfies $\eqref{eq:sph-symm-dirichlet-bc}$ and the compatibility conditions{\rm :}
   \beqa
   &\big(x^{n-1}m_0^\eps\big)_x\big|_{x=a}=0, \nonumber  \\
   & m_{0,x}^\eps|_{x=b}=\varepsilon x^{-(n-1)}\big(x^{n-1}\rho_{0,x}^\eps\big)_x\big|_{x=b}, \nonumber  \\
   &\Big(\frac{(m_0^\eps)^2}{\rho_0^\eps}+p_\de(\rho_0^\eps)\Big)_x\big|_{x=b}=\varepsilon x^{-(n-1)}\big(x^{n-1}m_0^\eps\big)_x\big|_{x=b}; \nonumber
   \eeqa
   \item[\rm (iii)] $\int_a^b\big(\frac{(m_0^\varepsilon)^2}{2\rho_0^\varepsilon}+\frac{\kappa(\rho_0^\varepsilon)^\gamma}{\gamma-1}\big)x^{n-1}\,dx<\infty$.
   \end{enumerate}
 Then there exists a unique global solution $(\rho^\varepsilon,m^\varepsilon)$ of \eqref{eq:sph-symm-approx}--\eqref{eq:sph-symm-dirichlet-bc} for
 $\gamma\in(1,\infty)$ such that $(\rho^\varepsilon,m^\varepsilon)\in\big(C^{2+\beta,1+\frac{\beta}{2}}(Q_T)\big)^2$ with $\inf_{Q_T}\rho^\varepsilon(t,x)>0$
 for all $T>0$.
\end{lemma}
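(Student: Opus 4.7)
My plan is to follow the scheme of Theorem \ref{artsol} essentially verbatim, using the energy estimate just established in Proposition 7.1 as the replacement for Proposition \ref{energy}. Since $a=a(\eps)>0$, the geometric coefficient $\frac{A'(x)}{A(x)}=\frac{n-1}{x}$ is of class $C^\infty([a,b])$ and is bounded together with all its derivatives on $[a,b]$ by constants depending on $\eps$ (through $a(\eps)$). Thus, on the fixed interval $[a,b]$, system \eqref{eq:sph-symm-approx} is a strictly parabolic quasilinear system with smooth coefficients away from $\rho=0$, so the Ladyzhenskaya--Solonnikov--Ural'tseva \cite{LadySoloUral} theory reduces the problem to producing an $\eps$-dependent \it a priori \rm bound $\|(\rho,\rho^{-1},m/\rho)\|_{L^\infty(Q_T)}\leq C(\eps,T)$ for any smooth local solution of \eqref{eq:sph-symm-approx}--\eqref{eq:sph-symm-dirichlet-bc}.

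The first step is to upgrade Proposition 7.1 to an $L^\infty$-in-time, $L^{2\max\{2,\gamma\}}$-integrability statement for $\|\rho(t,\cdot)\|_{L^\infty(a,b)}$, exactly as in Lemma \ref{energy2}; this is immediate from the fundamental theorem of calculus combined with \eqref{ineq:energy-spherical}, because the Dirichlet data $(\bar\rho,0)$ are constants so the boundary terms vanish. The second step is the analogue of Lemma \ref{lemma:max-principle} for the Riemann invariants $w=u+R(\rho),\ z=u-R(\rho)$. Multiplying the two equations of \eqref{eq:sph-symm-approx} by $w_\rho$ and $w_m$ (resp.\ $z_\rho,z_m$) and using the quasiconvexity of $w,-z$ gives a parabolic inequality for a shifted $\tilde w$, with forcing terms of the form $-u\sqrt{p_\delta'}\,\frac{n-1}{x}+\eps\bigl(\frac{n-1}{x}\bigr)'u$. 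Since $\frac{n-1}{x}$ and $\bigl(\frac{n-1}{x}\bigr)'=-\frac{n-1}{x^2}$ are bounded on $[a,b]$ by constants depending on $\eps$, the parabolic maximum principle combined with Gronwall's inequality exactly as in Lemma \ref{lemma:max-principle} yields an $\eps$-dependent upper bound for $\|(\rho,u)\|_{L^\infty(Q_T)}$.

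The third step is the higher-order energy estimate of Lemma \ref{energy4}: multiplying the two equations of \eqref{eq:sph-symm-approx} by $\rho_{xx}$ and $m_{xx}$ respectively, integrating over $Q_T$, and noting that the Dirichlet data $(\bar\rho,0)$ are time-independent so that $\rho_t=m_t=0$ at $x=a,b$ and the boundary contributions from integration by parts in $x$ vanish. Combined with the pointwise bounds from step 2, this gives $\|(\rho_x,m_x)\|_{L^\infty(0,T;L^2)}+\|(\rho_{xx},m_{xx})\|_{L^2(Q_T)}\leq C(\eps,T)$.

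The fourth and final step, which is the main technical obstacle, is the lower bound on $\rho$. I will reproduce Lemmas \ref{energy6}--\ref{energy7} with essentially no change: the cutoff function $\phi(\rho)$ truncated at $\tilde\rho<\inf_{[a,b]}\rho_0^\eps$ satisfies a transport--diffusion inequality whose source terms involve $\frac{A'}{A}=\frac{n-1}{x}$, again bounded on $[a,b]$; integration and Young's inequality give $\sup_{[0,T]}\int_a^b\phi(\rho)\,dx+\int_{Q_T}\rho^{-3}|\rho_x|^2\,dx\,dt\leq C(\eps)$. Inserting this into $|\rho(t,x)^{-1}|\leq|\int_{x_0}^x\rho^{-2}\rho_y\,dy|+\tilde\rho^{-1}$, together with Sobolev embedding, bounds $\int_0^T\|u_x(\tau,\cdot)\|_{L^\infty}\,d\tau$, after which the maximum principle applied to $v=1/\rho$ yields $\sup_{Q_T}v\leq C(\eps,T)$. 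The hard part is that the coefficients $\frac{n-1}{x}$ and $\bigl(\frac{n-1}{x}\bigr)'$ produce constants that genuinely blow up as $a(\eps)\to 0$; however, this does not impact the present existence result, which holds for each fixed $\eps>0$, and the uniform-in-$\eps$ estimates needed for the vanishing viscosity limit will be handled separately, in analogy with \S \ref{sec:uniform}--\S \ref{sec:general}. Once these bounds are assembled, Theorem 7.1 of \cite{LadySoloUral} provides the unique global $C^{2+\beta,1+\beta/2}(Q_T)$ solution, completing the proof.
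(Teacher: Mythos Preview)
Your proposal is correct and follows essentially the same approach as the paper. The paper does not give a separate proof of this lemma; it merely states that, with the energy estimate of Proposition~7.1 in hand, one is ``in the situation in \S\ref{sec:artificial}'' and can proceed ``exactly as in Theorem~\ref{artsol}'' via the maximum principle of Lemma~\ref{lemma:max-principle} and the subsequent lemmas, which is precisely the route you have written out in detail.
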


To conclude the proof of Theorem \ref{thm:sph-symm-redux}, we note that, as remarked in \cite{ChenPerep2},
the arguments to derive the uniform estimates and convergence of approximate solutions in  \cite{ChenPerep2}
are independent of the choice of $\ga\in(1,3]$, but rather work for all $\ga\in(1,\infty)$.
In addition, we now observe that, in obtaining these uniform estimates, the integrals were considered only over compact
regions $K\subset\R_+$, or over intervals $(x,b)$ with $x\in K$ for a compact region $K\subset\R_+$.
Therefore, our modification of the boundary condition at $a(\eps)$ in the above
does not affect these estimates at all.
In particular, we have the following lemma:

\begin{lemma}
Suppose that $(\rho_0^\eps, m_0^\eps)$ satisfy the assumptions of Lemma {\rm \ref{lemma:sph-symm-approx}} and that
$$
\sup_\eps \int_a^b\Big(\frac{(m_0^\varepsilon)^2}{2\rho_0^\varepsilon}+\frac{\kappa(\rho_0^\varepsilon)^\gamma}{\gamma-1}\Big)x^{n-1}\,dx<\infty.
$$
Let $\de(\eps),a(\eps),b(\eps)$, and $\bar\rho(\eps)$ satisfy the following relation{\rm :}
$$
\bar\rho^\ga b^n+\frac{\de}{\eps}b^n\leq M,
$$
where $M$ is a constant independent of $\eps$.
Then, for any compact region $K\subset\R_+$, there exists $M>0$ independent of $\eps$ such that
\beq
\int_0^T\int_K\big((\rho^\eps)^{\ga+1}+\de(\rho^\eps)^3+\rho^\eps|u^\eps|^3+(\rho^\eps)^{\ga+\th}\big)\,dx\,dt\leq M. \nonumber
\eeq
In addition, if $\psi(s)$ is any smooth, compactly supported test function on $\R$,
then the associated entropy pair $(\eta^\psi,q^\psi)$ satisfies
\beq
\eta^\psi(\rho^\eps,m^\eps)_t+q^\psi(\rho^\eps,m^\eps)_x \qquad \text{ are compact in } H^{-1}_{loc}. \nonumber
\eeq
\end{lemma}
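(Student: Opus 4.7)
My plan is to establish this result by specializing the uniform-estimate machinery of \S\ref{sec:uniform}--\S\ref{sec:general} to the spherically symmetric geometry $A(x)=\omega_n x^{n-1}$, while verifying that the choice of Dirichlet data \eqref{eq:sph-symm-dirichlet-bc} is compatible with every step. The decisive observation is that, on any compact $K\subset\R_+$ bounded away from the origin, the coefficients $A$, $A'/A=(n-1)/x$, $A''/A$, and all their relevant derivatives are uniformly bounded above and below, so every localized argument from Lemmas~\ref{gamma+1}, \ref{rhocubed}, \ref{unif2}, and \ref{hessianbound} carries over with essentially no change once the scaling parameters are calibrated correctly.

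I would first obtain the local $L^{\gamma+1}$--bound for $\rho^\eps$ and the $\delta$--weighted $L^3$--bound verbatim from the proof of Lemma~\ref{gamma+1}, since that proof only uses the local boundedness of $A$, $A'/A$, and $A''/A$. For the velocity bound, I would apply the entropy method of Lemma~\ref{unif2} with the generator $\psi(s)=\tfrac{1}{2}s|s|$: this is legitimate because the Dirichlet conditions at both $x=a$ and $x=b$ prescribe $u=0$, so the natural reference velocity is $u_-=0$. Multiplying \eqref{eq:sph-symm-approx} by $(\tilde\eta_\rho,\tilde\eta_m)A(x)$ and integrating from $a$ to $x\in K$ gives the analog of \eqref{mainineq}, and the boundary contribution $\int_0^T A(a)\tilde q(\bar\rho,0)\,dt$ produced at $x=a$ is non-positive by Lemma~\ref{entropy2} and may be discarded. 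The remaining terms $I_1,\dots,I_7$ are then estimated exactly as in the nozzle case, with the factor $|a|$ of the nozzle setting replaced by the outer-volume factor $b^n$.

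The core quantitative ingredient required is the spherical analog of Lemma~\ref{rhocubed}, namely
$$
\varepsilon\int_0^T\int_a^x\rho^3 y^{n-1}\,dy\,dt\leq M \qquad\text{for } x\in K,
$$
proved by splitting into the three regimes $\gamma\in(1,2)$, $\gamma\in[2,3]$, and $\gamma>3$, and applying Young's inequality against the dissipation terms from the main energy estimate \eqref{ineq:energy-spherical}. The scaling hypothesis $\bar\rho^\gamma b^n+\delta b^n/\eps\leq M$ is precisely what is needed to absorb the outer-boundary contribution (where $\rho^\eps=\bar\rho$) and the $\delta\rho^2$ pressure perturbation into an $\eps$--independent bound. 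Once this bound is in hand, the velocity estimate $\int_0^T\int_K(\rho^\eps|u^\eps|^3+(\rho^\eps)^{\gamma+\theta})\,dx\,dt\leq M$ follows by the bookkeeping of Lemma~\ref{unif2}, again applied locally on $K$.

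The $H^{-1}_{loc}$ compactness of $\eta^\psi(\rho^\eps,m^\eps)_t+q^\psi(\rho^\eps,m^\eps)_x$ then follows by the five-term decomposition \eqref{div(eta,q)} and the argument of Proposition~\ref{compactness}, since all computations are performed against test functions with compact support in $\R_+\setminus\{0\}$, on which $(n-1)/x$ and $-(n-1)/x^2$ are bounded. I expect the main obstacle to be the $\gamma>3$ case of the $\rho^3$ bound: in Lemma~\ref{rhocubed} this regime was handled trivially because the energy estimate directly controlled $\rho^\gamma$ at the diverging end, whereas here the analogous quantity is $\bar\rho^\gamma b^n$, which is only controlled as part of the scaling hypothesis, so a careful Hölder--interpolation argument between $\rho\in L^1(A\,dy)$ and $\rho\in L^\gamma(A\,dy)$ will be required to close the cubic estimate and feed it into the bounds for $I_4$--$I_6$ in the velocity integrability argument.
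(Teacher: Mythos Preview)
Your proposal contains a structural mismatch with both the paper's argument and the stated scaling hypothesis. The paper does not redo the uniform estimates in the spherical setting; it simply invokes \cite{ChenPerep2}, noting two facts: (i) the uniform estimates there were already proved for all $\gamma>1$ (only the \emph{construction} of approximate solutions was restricted to $\gamma\le 3$), and (ii) those estimates involve integrals only over compact $K\subset\R_+$ or over intervals $(x,b)$ with $x\in K$, so that changing the boundary condition at $a(\eps)$ from Neumann to Dirichlet leaves them untouched. The integration in the velocity estimate therefore runs \emph{outward}, from $x\in K$ to $b$, and this is why the hypothesis $\bar\rho^\gamma b^n+\delta b^n/\eps\le M$ appears: it controls the volume $\int_x^b y^{n-1}\,dy\sim b^n/n$ and the Dirichlet contribution at $b$.

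You instead propose to integrate from $a$ to $x\in K$, adapting the nozzle argument of Lemma~\ref{unif2}. This runs into a genuine obstruction: the nozzle framework relies throughout on $\|A'/A\|_{L^\infty}<\infty$, which fails here since $A'/A=(n-1)/y$ blows up as $y\to a(\eps)\to 0$. Concretely, in the bound for $I_6$ you would need to control $\int_0^T\int_a^x |A'(y)|\,p(\rho)\,dy\,dt=(n-1)\int_0^T\int_a^x y^{n-2}\rho^\gamma\,dy\,dt$, but the energy estimate \eqref{ineq:energy-spherical} only controls $\int y^{n-1}\rho^\gamma\,dy$; the missing factor of $y^{-1}$ is unbounded near the origin. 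Similar losses occur in $I_2$ and in the Gronwall-type step, which in the nozzle case used $|A'/A|\le M$. Your own text reflects this confusion: you integrate on $(a,x)$ but then speak of ``the outer-volume factor $b^n$'' and ``the outer-boundary contribution (where $\rho^\eps=\bar\rho$)'', neither of which arises on $(a,x)$. The correct move---and the one the paper relies on via \cite{ChenPerep2}---is to integrate toward $b$, where $1/y$ is uniformly bounded on $(x,b)$ for $x\in K$, and then the scaling hypothesis enters exactly to absorb the $b^n$ volume and the $(\bar\rho,0)$ data at the outer boundary.
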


The convergence of the approximate solutions to an entropy solution of the spherically symmetric Euler equations
now follows from the standard argument in \cite{ChenPerep2}. This completes the proof of Theorem 7.1, hence Theorem \ref{thm:sph-symm}.

\subsection{\, Neumann boundary conditions for the density}\label{subsec:Neumann}
Finally, we consider the approximate solutions
of system \eqref{eq:sph-symm-approx} with the boundary data given by
\beq\label{eq:sph-symm-neumann-bc}
(\rho_x,m)|_{x=a}=(0,0),\qquad\, (\rho,m)|_{x=b}=(\bar\rho,0).
\eeq
This choice of Neumann data for the density at $x=a$ has the advantage that it allows the density
of the approximate solutions to become very large near the origin,
a motivation for its choice in \cite{ChenPerep2}.

From \cite[Proposition 2.1]{ChenPerep2}, we see that
the mechanical energy is uniformly bounded:

\begin{lemma}
Suppose that $(\rho,u)$ is a $C^{2,1}(Q_T)$ solution of problem \eqref{eq:sph-symm-approx} and \eqref{eq:sph-symm-neumann-bc}.
Then, for each $\eps>0$,
\beqa
&\sup_{[0,T]}\int_a^b\big(\half\rho u^2+\overline{h_\de}(\rho,\bar\rho)\big)x^{n-1}\, dx\nonumber\\
&+\eps\int_{Q_T}\big(h_\de''(\rho)|\rho_x|^2+\rho|u_x|^2+(n-1)\frac{\rho u^2}{x^2}\big)x^{n-1}\,dx\,dt\leq E_0.\nonumber
\eeqa
\end{lemma}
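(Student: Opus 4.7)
The plan is to derive a differential identity for the total relative mechanical energy $\int_a^b A(x)\,\overline{\eta_\de^*}\,dx$ and integrate it in $t$, checking that every boundary contribution cancels under \eqref{eq:sph-symm-neumann-bc}. Since the reference state $(\bar\rho,0)$ is constant in $x$, one has $(\eta_\de^*)_m(\bar\rho,0)=0$ and $(\eta_\de^*)_\rho(\bar\rho,0)=h_\de'(\bar\rho)$, so
\[
\overline{\eta_\de^*}(\rho,m)=\eta_\de^*(\rho,m)-\eta_\de^*(\bar\rho,0)-h_\de'(\bar\rho)(\rho-\bar\rho),
\]
and the natural accompanying flux is $\overline{q_\de^*}:=q_\de^*-h_\de'(\bar\rho)m$.

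First I would multiply the continuity equation in \eqref{eq:sph-symm-approx} by $(\overline{\eta_\de^*})_\rho A(x)$ and the momentum equation by $uA(x)$, and add. Using the standard variable-area identity $(A\eta_\de^*)_t+(Aq_\de^*)_x=0$ for the inviscid nozzle system (a direct computation using the entropy-pair relation together with the exact cancellation of the geometric source $A'p_\de$), combined with the continuity equation to absorb the $-h_\de'(\bar\rho)(\rho-\bar\rho)$ correction, one obtains
\[
(A\overline{\eta_\de^*})_t+(A\overline{q_\de^*})_x=\eps(\overline{\eta_\de^*})_\rho(A\rho_x)_x+\eps\,uA\big(m_x+\tfrac{A'}{A}m\big)_x.
\]
Integrating over $(a,b)$, the flux $A\overline{q_\de^*}$ vanishes at both endpoints because $m|_{x=a,b}=0$. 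Integrating the first viscous term by parts produces the boundary contribution $[A\rho_x(\overline{\eta_\de^*})_\rho]_a^b$, which vanishes because $\rho_x=0$ at $x=a$ (Neumann data) and $(\overline{\eta_\de^*})_\rho=-\half u^2+h_\de'(\bar\rho)-h_\de'(\bar\rho)=0$ at $x=b$ (where $\rho=\bar\rho$ and $u=0$). The second viscous term produces $[uA(m_x+\tfrac{A'}{A}m)]_a^b$, which vanishes because $u=0$ at both endpoints.

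What remains splits into the coercive dissipation $-\eps\int_a^b A\big(h_\de''(\rho)|\rho_x|^2+\rho u_x^2\big)dx$ plus the geometric remainder
\[
-\eps\int_a^b\Big(2A'\rho u u_x+u^2 A'\rho_x+\tfrac{(A')^2}{A}\rho u^2\Big)dx.
\]
To handle this remainder, I would use $2\rho u u_x+u^2\rho_x=(\rho u^2)_x$ and $A'=\tfrac{n-1}{x}A$ to rewrite it as $-\eps(n-1)\int_a^b\tfrac{A}{x}(\rho u^2)_x\,dx-\eps(n-1)^2\int_a^b\tfrac{A}{x^2}\rho u^2\,dx$. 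A further integration by parts of the first piece, using $u|_{x=a,b}=0$ together with $(A/x)_x=\tfrac{n-2}{x^2}A$, collapses the two pieces into exactly $-\eps(n-1)\int_a^b\tfrac{A}{x^2}\rho u^2\,dx$. Integrating in $t$ from $0$ to $T$ then yields the stated estimate (in fact as an equality with $E_0$ on the right).

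The main delicate point is ensuring that every boundary evaluation and integration-by-parts step above is legitimate: this requires $\rho^\eps$ to remain strictly positive on $[a,b]$ so that $u=m/\rho$ is smooth up to the boundary and the Neumann/Dirichlet data can be taken in a classical pointwise sense. This positivity is supplied by the approximate-solution construction of Lemma \ref{lemma:sph-symm-approx}, which guarantees $\inf_{Q_T}\rho^\eps>0$ for each fixed $\eps>0$, and thus justifies every manipulation above.
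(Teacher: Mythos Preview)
Your argument is correct and follows essentially the same route as the paper (which here simply refers to \cite[Proposition~2.1]{ChenPerep2}; compare also the Dirichlet analogue, Proposition~7.1 above). The only cosmetic difference is in how the term $(n-1)\rho u^2/x^2$ is extracted: the paper rewrites the momentum viscosity as $\eps x^{-(n-1)}(x^{n-1}m_x)_x-\eps(n-1)x^{-2}m$ before integrating by parts, so that the geometric piece falls out immediately, whereas you carry it as a remainder and recover it via a second integration by parts using $(A/x)_x=(n-2)A/x^2$. Both computations yield the same exact identity.

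One small correction: your final paragraph invokes Lemma~\ref{lemma:sph-symm-approx} for the strict positivity of $\rho^\eps$, but that lemma is stated for the Dirichlet problem, not the Neumann problem under consideration here. In fact no external input is needed: the present lemma is an \emph{a priori} estimate whose hypothesis already assumes $(\rho,u)\in C^{2,1}(Q_T)$, so $u=m/\rho$ is smooth up to the boundary and all of your pointwise boundary evaluations and integrations by parts are justified directly.
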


Observe now that $\overline{h_\de}(\rho,\bar\rho)$ grows, for $\rho$ large, as $\rho^\gamma$.
In order to close the estimates of \cite[Lemma 2.3]{ChenPerep2} for all $\ga>1$,
we require an additional higher order integrability of the density $\rho$.
To this end, we take the test function $\psi(s)=s^4$ and consider the entropy $\eta^\psi$ generated from it.
We observe that, by a simple calculation, $\eta^\psi$ satisfies
\beq
\rho u^4+\rho^{2\ga-1}\leq M\eta^\psi(\rho,m) \qquad \text{ for all } (\rho,m)\in\mathbb{R}^2_+,
\eeq
where the constant $M$ depends only on $\ga>1$.

\begin{lemma}\label{lemma:sph-symm-neumann-extra-energy}
Let
$$
\mathcal{E}_0:=\int_a^b\eta^\psi(\rho_0^\eps,m_0^\eps)x^{n-1}\,dx<\infty.
$$
Then solutions $(\rho^\eps,m^\eps)$ of \eqref{eq:sph-symm-approx} and \eqref{eq:sph-symm-neumann-bc} satisfy that,
for any $T>0$,
\beq
 \sup_{[0,T]}\int_a^b\eta^\psi(\rho^\eps,m^\eps)x^{n-1}\,dx\leq \mathcal{E}_0.
\eeq
In particular, there exists $C=C(\eps)$ such that
\beq\label{est:rho-two-gamma}
\sup_{[0,T]}\int_a^b\rho^{2\ga-1} x^{n-1}\,dx\leq C.
\eeq
\end{lemma}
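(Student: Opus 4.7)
The plan is to multiply the approximate equations in \eqref{eq:sph-symm-approx} by the components of $\nabla\eta^\psi$, weight by $x^{n-1}$, and integrate over $(a,b)$, exploiting the explicit polynomial form of $\eta^\psi$ for $\psi(s)=s^4$ together with the particular boundary conditions \eqref{eq:sph-symm-neumann-bc}. Using the LPT representation \eqref{2.3a}--\eqref{2.3b} and expanding against the measure $(1-s^2)_+^\lambda\,ds$, with $\lambda=(3-\gamma)/(2(\gamma-1))$ and $I_k:=\int_{-1}^{1} s^{2k}(1-s^2)^\lambda\,ds$, one obtains the explicit representation
\[
\eta^\psi = I_0\rho u^4 + 6I_1\rho^\gamma u^2 + I_2\rho^{2\gamma-1},\qquad q^\psi = I_0\rho u^5 + (2\gamma+4)I_1\rho^\gamma u^3 + (2\gamma-1)I_2\rho^{2\gamma-1}u,
\]
which immediately yields the pointwise lower bound $\rho u^4+\rho^{2\gamma-1}\le M\eta^\psi$. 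A short algebraic computation then gives $S^\psi - q^\psi = 4(\gamma-1)I_1\rho^\gamma u^3$, where $S^\psi:=m\eta^\psi_\rho+(m^2/\rho)\eta^\psi_m$ is the geometric source.

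Multiplying the continuity equation by $\eta^\psi_\rho$, the momentum equation by $\eta^\psi_m$, summing, weighting by $x^{n-1}$, and integrating over $(a,b)$ yields the balance
\[
\frac{d}{dt}\int_a^b\eta^\psi x^{n-1}\,dx + 4(\gamma-1)(n-1)I_1\int_a^b x^{n-2}\rho^\gamma u^3\,dx + \mathcal{D}^\eps + \mathcal{P}_\delta = \mathcal{B},
\]
where $\mathcal{D}^\eps = \eps\int_a^b x^{n-1}(\rho_x,m_x)\nabla^2\eta^\psi(\rho_x,m_x)^\top\,dx\ge 0$ is the convex viscous dissipation, $\mathcal{P}_\delta$ collects the $\delta\rho^2$-corrections, and $\mathcal{B}$ contains the boundary contributions. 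Because $m|_{x=a,b}=0$ forces $u|_{x=a,b}=0$, and both $q^\psi$ and $\eta^\psi_m$ are odd in $u$, all boundary contributions involving the flux $q^\psi$ and the momentum viscosity vanish identically; the Neumann condition $\rho_x|_{x=a}=0$ eliminates the density-viscosity term at $x=a$, leaving only a contribution at $x=b$ proportional to $\bar\rho^{2\gamma-2}$, which vanishes as $\bar\rho\to 0$.

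The main obstacle is that the geometric source $S^\psi - q^\psi = 4(\gamma-1)I_1\rho^\gamma u^3$ is \emph{not} sign-definite (being odd in $u$), so a strict monotone bound cannot be read off directly from the balance. The expected path to recovering the stated monotone estimate is to pair the indefinite source against $\mathcal{D}^\eps$: writing $\rho^\gamma u^3 = (\rho^{\gamma/2}u^2)\cdot(\rho^{\gamma/2}u)$ and bounding the second factor via Cauchy--Schwarz against the Hessian contributions $\rho u^2 u_x^2$ and $\rho^{\gamma-1}u^2\rho_x^2$ already present in $\mathcal{D}^\eps$, together with the AM--GM estimate $\rho^\gamma|u|^3\le\tfrac12(\rho u^4+\rho^{2\gamma-1}u^2)\le M\eta^\psi$ used to control the non-absorbed residue, should produce a net non-positive right-hand side after integration, delivering $\tfrac{d}{dt}\int_a^b\eta^\psi x^{n-1}\,dx\le 0$ modulo the boundary correction $O(\bar\rho^{2\gamma-2})$ and the $O(\delta/\eps)$ contribution from $\mathcal{P}_\delta$, which are absorbed into the initial datum under the standing hypotheses on $\delta,\eps,b,\bar\rho$. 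Integrating in $t$ yields $\sup_{[0,T]}\int_a^b\eta^\psi x^{n-1}\,dx\le\mathcal{E}_0$, and the pointwise comparison $\rho^{2\gamma-1}\le M\eta^\psi$ immediately gives the consequence \eqref{est:rho-two-gamma} with $C=C(\eps)$ absorbing $M$ and $\mathcal{E}_0(\eps)$.
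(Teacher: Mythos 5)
Your setup is the same as the paper's intended one: multiply \eqref{eq:sph-symm-approx} by $\nabla\eta^\psi$, weight by $x^{n-1}$, use the convexity of $\eta^\psi$ for the viscous dissipation and the boundary conditions \eqref{eq:sph-symm-neumann-bc} to kill the flux terms (the paper's proof is literally ``repeat the computation of \cite[Proposition 2.1]{ChenPerep2} with $\eta^\psi$ in place of $\eta^*$''), and your explicit formulas for $\eta^\psi$, $q^\psi$ and for the discrepancy $m\eta^\psi_\rho+\tfrac{m^2}{\rho}\eta^\psi_m-q^\psi=4(\ga-1)I_1\rho^\ga u^3$ are correct. Indeed, the reason the paper's one-line reference works verbatim for $\eta^*$ is the exact identity $m\eta^*_\rho+\tfrac{m^2}{\rho}\eta^*_m=q^*$, which annihilates the geometric source; you rightly observe that this identity fails for $\psi(s)=s^4$. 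The genuine gap is that you never actually dispose of this indefinite source, and the mechanism you sketch does not work. First, the pointwise bound $\rho^{2\ga-1}u^2\le M\eta^\psi$ in your AM--GM step is false: taking $u=\rho^\th$ with $\rho\to\infty$ gives $\rho^{2\ga-1}u^2\sim\rho^{3\ga-2}$ while $\eta^\psi\sim\rho^{2\ga-1}$, so $\rho^\ga|u|^3\le\tfrac12(\rho u^4+\rho^{2\ga-1}u^2)$ cannot be closed against $\eta^\psi$ pointwise. Second, the source carries no factor of $\eps$, so pairing it by Cauchy--Schwarz against the dissipation $\mathcal{D}^\eps$ necessarily leaves a residue of order $\eps^{-1}$ that is not sign-definite; this cannot produce a non-positive right-hand side, and in particular cannot deliver the clean monotone bound $\le\mathcal{E}_0$ (at best a Gronwall bound with an $\eps$-dependent factor). ``Should produce a net non-positive right-hand side'' is precisely the step that is missing.

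What does close the estimate at fixed $\eps$ (which is all that \eqref{est:rho-two-gamma} and its subsequent use require, since $C=C(\eps)$ is permitted) is a Gronwall argument using bounds that are available \emph{before} this lemma: the maximum principle for the approximate spherical problem (\cite[Lemma 2.2]{ChenPerep2}, the analogue of Lemma \ref{lemma:max-principle}, which relies only on the basic energy estimate and not on the present lemma) gives $\|(\rho,u)\|_{L^\infty(Q_T)}\le C(\eps,T)$, whence $\tfrac{n-1}{x}\rho^\ga|u|^3\le \tfrac{n-1}{a(\eps)}\|u\|_{L^\infty(Q_T)}\,\rho^\ga u^2\le C(\eps,T)\,\eta^\psi$ on $(a(\eps),b(\eps))$, and Gronwall yields $\sup_{[0,T]}\int_a^b\eta^\psi x^{n-1}\,dx\le C(\eps,T)(\mathcal{E}_0+1)$; this suffices for \eqref{est:rho-two-gamma}, though it is weaker than the literal $\le\mathcal{E}_0$ asserted in the statement. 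Separately, your treatment of the boundary term at $x=b$ is not admissible: after integration by parts it reads $\eps\, b^{n-1}\rho_x(t,b)\,\eta^\psi_\rho(\bar\rho,0)$, and $\rho_x(t,b)$ is uncontrolled, so ``vanishes as $\bar\rho\to0$'' does not help at fixed $\eps$. The standard remedy, exactly as in the paper's treatment of $\eta^*_\de$ under the Neumann data, is to run the computation with the relative entropy $\eta^\psi(\rho,m)-\eta^\psi(\bar\rho,0)-\nabla\eta^\psi(\bar\rho,0)\cdot(\rho-\bar\rho,m)$, whose gradient vanishes at the boundary state $(\bar\rho,0)$, so that this term drops while the same quantities $\rho u^4$ and $\rho^{2\ga-1}$ are still controlled.
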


The proof of this lemma follows from the same calculation as that of \cite[Proposition 2.1]{ChenPerep2},
 but with the new entropy $\eta^\psi$ in place of $\eta^*$. Observe that $\eta^\psi$ is also convex.

We are now in a position to see that the theorem regarding the existence of the approximate solutions
for the spherically symmetric Euler equations in \cite[Theorem 2.1]{ChenPerep2} (under the assumption of the Neumann boundary data for density $\rho$)
can be extended to the full range of $\gamma$-law gases, especially the case: $\gamma>3$.
We observe that it suffices to show only Lemma 2.3 in \cite{ChenPerep2}, since the rest of the argument follows as
in the case $\ga\in(1,3]$ considered already in \cite{ChenPerep2}.
We therefore provide a sketch of the proof of this result, highlighting the places where this new estimate is used.

\medskip
We aim to show for the whole range $\gamma\in (1,\infty)$ that there exists $C$, depending on $\eps>0$, such that the approximate solutions $(\rho^\eps,m^\eps)$ satisfy
\beq
\sup_{[0,T]}\int_a^b\big(|\rho^\eps_x|^2+|m^\eps_x|^2\big)\,dx+\int_{Q_T}\big(|\rho^\eps_{xx}|^2+|m^\eps_{xx}|^2\big)\,dx\,dt\leq C. \nonumber
\eeq

Proceeding as in the proof of \cite[Lemma 2.3]{ChenPerep2} and Lemma \ref{energy4}, we obtain
\beqa
\frac{1}{2}&\int_a^b \big(|\rho_x(t,x)|^2+|m_x(t,x)|^2\big)\,dx+\varepsilon \int_{Q_t}\big(|\rho_{xx}|^2+|m_{xx}|^2\big)\,dx\,d\tau \nonumber\\
=&\,\frac{1}{2}\int_a^b\big(|\rho_{0,x}|^2+|m_{0,x}|^2\big)\,dx+\int_{Q_t}\big(m_x\rho_{xx}+\frac{n-1}{x}m\rho_{xx}\big)\,dx\,d\tau\\
&\,+\int_{Q_t}(\rho u^2+p_\de)_xm_{xx}\,dx\,d\tau\nonumber \\
&\,+(n-1)\int_{Q_t}\big(\frac{1}{x}\rho u^2 m_{xx}-\frac{\eps}{x}\rho_x\rho_{xx}\big)\,dx\,d\tau
  -(n-1)\eps\int_{Q_t}\Big(\frac{m}{x}\Big)_xm_{xx}\,dx\,d\tau. \nonumber
\eeqa
We here give only the estimate of the critical term $(\rho u^2)_xm_{xx}=u^2\rho_xm_{xx}+2\rho uu_xm_{xx}$.
Following \cite[Lemma 2.3]{ChenPerep2}, we obtain that, for $\De>0$ to be chosen later,
\beqas
\int_{Q_t}|u^2\rho_xm_{xx}|\,dx\,d\tau
\leq&\, \De\int_{Q_t}|m_{xx}|^2\,dx\,d\tau\\
 &\, +C_\De\int_0^t\Big(\|u(\tau,\cdot)\|_{L^\infty}^4\int_a^bh_\de''(\rho)|\rho_x(\tau,x)|^2\,dx\Big)\,d\tau.
\eeqas
From the maximum principle lemma, \cite[Lemma 2.2]{ChenPerep2},
$$
\|u(\tau,\cdot)\|_{L^\infty}^4\leq C\Big(1+\|\rho\|_{L^\infty(Q_\tau)}^{2\max\{1,\ga-1\}}\Big),
$$
which yields
\beqas
&\int_{Q_t}|u^2\rho_xm_{xx}|\,dx\,d\tau\\
&\leq\De\int_{Q_t}|m_{xx}|^2\,dx\,d\tau\\
  &\quad+C_\De\int_0^t\Big(\big(1+\sup_{s\in[0,\tau]}\|\rho(s,\cdot)\|_{L^\infty}^{2\max\{1,\ga-1\}}\big)\int_a^bh_\de''(\rho)|\rho_x(\tau,x)|^2\,dx\Big)\,d\tau.
\eeqas
Now we can  use the improved estimate \eqref{est:rho-two-gamma} and the argument of \cite[Lemma 2.1]{ChenPerep2},
and also compare with Lemma \ref{energy2} to show that
$$
\|\rho^{2\ga+1}\|_{L^\infty(a,b)}\leq C\Big(1+\int_a^b|\rho_y|^2dy\Big).
$$
We therefore obtain
\beqa
&\int_{Q_t}|u^2\rho_xm_{xx}|\,dx\,d\tau\nonumber\\
&\leq\De\int_{Q_t}|m_{xx}|^2\,dx\,d\tau\\
 &\quad+C_\De\int_0^t\Big(\big(1+\sup_{s\in[0,t]}\int_a^b|\rho_x(s,x)|^2\,dx\big)\int_a^bh_\de''(\rho)|\rho_x(\tau,x)|^2\,dx\Big)\,d\tau \nonumber
\eeqa
for the whole range $\gamma\in (1, \infty)$, especially including $\gamma>3$.

Arguing similarly for the other terms, we conclude the result.

This result now allows us to conclude the argument of Theorem 2.1 in \cite{ChenPerep2}
as it is done there.
As remarked previously, this is sufficient to
apply the results of \S 3 in \cite{ChenPerep2}
to conclude Theorem \ref{thm:sph-symm-redux}, and hence Theorem \ref{thm:sph-symm}.

%

\medskip

\begin{acknowledgements}
\, The research of
Gui-Qiang G. Chen was supported in part by
the UK
Engineering and Physical Sciences Research Council Award
EP/E035027/1 and
EP/L015811/1, and the Royal Society--Wolfson Research Merit Award (UK).
The research of Matthew R. I. Schrecker  was supported in part by
the UK Engineering and Physical Sciences Research Council Award
EP/L015811/1.
The authors would also like to thank Feimin Huang and Mikhail Perepelitsa for helpful discussions.
\end{acknowledgements}



\end{document}